\newcommand{\Rami}[1]{{{#1}}}
\newtheorem{theorem}{Theorem}[section]
\newtheorem{proposition}[theorem]{Proposition}
\newtheorem{corollary}[theorem]{Corollary}
\newtheorem{lemma}[theorem]{Lemma}
\newtheorem{definition}[theorem]{Definition}
\newtheorem{notation}[theorem]{Notation}
\newtheorem{remark}[theorem]{Remark}
\newtheorem{introtheorem}{Theorem}
\newtheorem{introcorollary}[introtheorem]{Corollary}
\DeclareMathOperator{\val}{val}
\DeclareMathOperator{\Spec}{Spec}
\DeclareMathOperator{\spec}{Spec}
\DeclareMathOperator{\GL}{GL}
\DeclareMathOperator{\Gal}{Gal}
\DeclareMathOperator{\Frac}{Frac}
\DeclareMathOperator{\Id}{Id}
\newcommand{\fX}{\mathfrak{X}}
\newcommand{\fY}{\mathfrak{Y}}
\newcommand{\fZ}{\mathfrak{Z}}
\newcommand{\cF}{\mathcal{F}}
\newcommand{\cS}{\mathcal{S}}
\newcommand{\bZ}{\mathbb{Z}}
\title{Pointwise surjective presentations of stacks}
\author{Avraham Aizenbud and Nir Avni}
\begin{document}

\maketitle

\begin{abstract} We show that any stack $\mathfrak{X}$ of finite type over a Noetherian scheme has a presentation $X \rightarrow \mathfrak{X}$ by a scheme of finite type such that $X(F) \rightarrow \mathfrak{X}(F)$ is onto, for every finite or real closed field $F$. Under some additional conditions on $\mathfrak{X}$, we show the same for all perfect fields. We prove similar results for (some) Henselian rings.

We give two applications of the main result. One is to counting isomorphism classes of stacks over the rings $\mathbb{Z}/p^n$; the other is about the relation between real algebraic and Nash stacks.
\end{abstract}

\tableofcontents
\section{Introduction} \label{sec:introduction}

Let $\mathfrak{X}$ be an algebraic stack. By definition, there exist a scheme $X_0$ and a submersive smooth map $X \rightarrow \mathfrak{X}$; such a map is called a presentation of $\mathfrak{X}$. Let $X_1=X_0 \times_{\mathfrak{X}} X_0$ be the fiber product. Note that, in general, $X_1$ is an algebraic space. The two projections $s,t:X_1 \rightarrow X_0$, together with the diagonal $\Delta:X_0 \rightarrow X_1$ and the composition map $c:X_1 \times_{s,X_0,t} X_1 \rightarrow X_1$ give a structure of a groupoid object $(X_0,X_1,s,t,\Delta,c)$ in algebraic spaces: here $X_0$ is the space of objects, $X_1$ is the space of morphisms, $s$ and $t$ are the source and target maps, $\Delta$ is the identity map, and $c$ is the composition map.

The groupoid object $(X_0,X_1,s,t,\Delta,c)$ is closely related to $\mathfrak{X}$. In particular, for any field $F$, there is a natural and fully faithful functor from $(X_0(F),X_1(F),s,t,\Delta,c)$ to $\mathfrak{X}(F)$. For algebraically closed fields, this functor is an  equivalence of groupoids. However, this is false in general: taking $\mathfrak{X}$ to be the classifying space of the group $C_2$ and $X_0$ to be a point, we have that $X_1$ is a pair of points and, for every field $F$, the groupoid  $(X_0(F),X_1(F),s,t,\Delta,c)$ has only one object, whereas the isomorphism classes in $\mathfrak{X}(F)$ are in bijection with the square class group of $F$.

In this paper we show that every algebraic stack has a presentation such that the above functor is an equivalence of groupoids, for any finite or real-closed field $F$. We also show that, under some condition on the stack $\mathfrak{X}$, there is a presentation such that the above functor is an equivalence of groupoids, for any perfect field $F$. The results also extend to Henselian rings with residue fields of the above form.

We give two applications of the main result. The first is to the study of the sequence $|\pi_0(\mathfrak{X}(\mathbb{Z} / n))|$, where $\mathfrak{X}$ is a stack defined over $\mathbb{Z}$ and $\pi_0(\mathfrak{X}(\mathbb{Z} /n))$ is the set of isomorphism classes of $\mathfrak{X}(\mathbb{Z} / n)$. The second is to show that, for any algebraic stack $\mathfrak{X}$ defined over $\mathbb{R}$, the groupoid $\mathfrak{X}(\mathbb{R})$ has a structure of a Nash groupoid.

\subsection{Formulation of the main results}

We fix a Noetherian scheme $S$. All the schemes/group schemes/algebraic spaces/algebraic stacks we will consider will be of finite type over $S$ unless stated otherwise.

\begin{definition} Let $\pi :X\rightarrow \frak{X}$ be presentation of an algebraic stack and let $T \in Sch_{/S}$ be a scheme.
\begin{enumerate}
\item A $T$-point $T \rightarrow \frak{X}$ is $\pi$-liftable if it factors through some map $T \rightarrow X$ (up to isomorphism).
\item We say that $\pi$ is $T$-onto if every $T$ point of $\frak{X}$ is $\pi$-liftable.
\item Let $\mathcal{S} \subset Sch_{/S}$ be a full subcategory of the overcategory of $S$. We say that $\pi$ is $\mathcal{S}$-onto if it is $T$-onto, for every object of $\mathcal{S}$.
\end{enumerate}
\end{definition}

\begin{definition}$ $
We denote:
\begin{itemize}
\item by $\mathcal{F} \subset Sch_{/S}$ the category of spectra of fields,
\item by $\mathcal{F}_{\mathrm{perf}} \subset \mathcal{F}$ the category of spectra of perfect fields,
\item by $\mathcal{F}_f \subset \mathcal{F}$ the category of spectra of finite fields,
\item by $\mathcal{F}_r \subset \mathcal{F}$ the category of spectra of real closed fileds,
\item by $\mathcal{H} \subset Sch_{/S}$ the category of  Henselian schemes (i.e. spectra of Henselian local rings),
\item and by $\mathcal{H}_{\mathrm{perf}}, \mathcal{H}_{f}, \mathcal{H}_{r}$ the categories of Henselian schemes whose closed points are in $\mathcal{F}_{\mathrm{perf}}, \mathcal{F}_{f}, \mathcal{F}_{r}$ respectively.
%\item by $\mathcal{H}_{\mathrm{perf}} \subset \mathcal{H}$ the category of Henselian schemes with perfect residue field,
%\item by $\mathcal{H}_f \subset \mathcal{H}$ the category of Henselian schemes with finite residue field,
%\item and  by  $\mathcal H_r \subset \mathcal{H}$ the category of Henselian schemes whos residue field is real closed.
\end{itemize}
%$$\mathcal{H}^n:=\left\{ \Spec A \mid \text{$(A,\mathfrak{m})$ is Henselian local ring such that $|[(A/\mathfrak{m})^{sep}:A/\mathfrak{m}] \leq n $} \right\}.$$
\end{definition}

%\begin{definition}
%We say that a stack $\frak{X}$ is linear if,
%for every scheme $X$ and any map $f:X \rightarrow \frak{X}$, the group scheme over $X$ defined by $G(s:U \rightarrow X)=\Isom(s\circ f,s \circ f)$ is linear.?? I think that wew can weaken this condition
%The diagonal map $\Delta:\frak{X} \to \frak{X} \times \frak{X}$ is quasi-affine.
%\end{definition}

\begin{definition}[{cf. \cite[Definition 1.1.8]{DG}}]
$ $
\begin{itemize}
\item Let $\frak{X}$ be a stack, let $T$ be a scheme, and let $x\in \frak{X}(T)$ be a $T$-point. For any $T$-scheme $R\to T$, define $Aut(x)(R):=Aut(y)$ where $y$ is the $R$-point of $\fX$ defined by the composition $R\to T\to \fX$.
\item
We say that $\fX$ is QCA if for any separably closed field $F$ and for any $F$-point $x$ of $\fX$, the functor $Aut(x)$ is represented by a linear algebraic group
\end{itemize}

%We say that a stack $\frak{X}$ is linear if,
%for every scheme $X$ and any map $f:X \rightarrow \frak{X}$, the group scheme over $X$ defined by $G(s:U \rightarrow X)=\Isom(s\circ f,s \circ f)$ is linear.?? I think that wew can weaken this condition
%The diagonal map $\Delta:\frak{X} \to \frak{X} \times \frak{X}$ is quasi-affine.
\end{definition}

The main result in this paper is the following:

\begin{introtheorem}[See \S\ref{sec:lift}] \label{thm:lift} Suppose $\frak{X}$ is a finite type stack over $S$. Then
 \begin{enumerate}
\item There is a presentation $X \rightarrow \frak{X}$ which is $\mathcal{H}_f$-onto and $\mathcal H_r$-onto.
\item %If $\Aut(x)$ is affine for every geometric point $x$ of $\frak{X}$,
If $\fX$ is QCA, then there is a presentation $X \rightarrow \frak{X}$ which is $\mathcal{H}_{\mathrm{perf}}$-onto.
%\item For every $n$, there is a presentation $X \rightarrow \frak{X}$ which is  $\mathcal{H}^n$-onto.
%\item There is a presentation $X \rightarrow \frak{X}$ which is  $\mathcal{R}$-onto.
\end{enumerate}
\end{introtheorem}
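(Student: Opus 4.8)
The plan is to first reduce the two Henselian statements to statements about fields, and then to prove those by induction on $\fX$. For the reduction: any presentation $\pi\colon X\to\fX$ is smooth, so for $T=\Spec R$ with $R$ a Henselian local ring with residue field $k$ the base change $X\times_{\fX}T\to T$ is a smooth morphism of algebraic spaces, and such a morphism admits an $R$-point whenever its closed fibre admits a $k$-point (Hensel's lemma). Hence a $T$-point of $\fX$ is $\pi$-liftable if and only if its restriction along $\Spec k\hookrightarrow T$ is, so $\pi$ is $\mathcal H_f$-onto (resp. $\mathcal H_r$, $\mathcal H_{\mathrm{perf}}$) if and only if it is $\cF_f$-onto (resp. $\cF_r$, $\cF_{\mathrm{perf}}$). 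It therefore suffices to build, for an arbitrary finite type $\fX$, a presentation onto finite and real closed fields, and, when $\fX$ is QCA, one onto all perfect fields.

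The clean model case is $\fX\cong[Y/\GL_n]$ with $Y$ a quasi-affine scheme. Here $Y\to[Y/\GL_n]$ is an affine presentation whose fibre over a point $x\colon\Spec F\to\fX$ is the $\GL_n$-torsor on $\Spec F$ underlying $x$; this torsor is trivial by Hilbert 90, so $x$ lifts to $Y(F)$ and the presentation is onto on \emph{every} field. (Relatedly, for a closed subgroup $G\subseteq\GL_n$ over $S$ the exact sequence of pointed sets $\GL_n(F)\to(\GL_n/G)(F)\to H^1(F,G)\to H^1(F,\GL_n)=\{1\}$ shows $\GL_n/G\to BG$ is onto on all fields.) So the strategy is to reduce a general finite type stack — and, as a sub-case, a general finite type algebraic space — to this model case, by Noetherian induction.

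For the induction, note that a point of $\fX$ over a field factors through $\fX_{\mathrm{red}}$ and through any reduced locally closed substack containing its image. Thus it is enough to (i) find a dense open substack $\mathfrak U\subseteq\fX$ with a presentation that is onto on the relevant fields, (ii) apply the inductive hypothesis to the reduced closed complement $\mathfrak Z$, and (iii) glue. For (i) one uses generic flatness of the inertia stack and the structure theory of algebraic stacks: over a dense open, $\fX$ is a gerbe over an algebraic space, and — when the band is a linear algebraic group, which holds automatically under the QCA hypothesis, and which can be arranged on the strata in the finite and real closed cases by the cohomological input of the last paragraph — such a gerbe of finite order can be brought into the model form $[Y/\GL_n]$ (via the frame bundle of a twisted vector bundle, reducing further to an algebraic space, which in turn has a dense open subscheme). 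Step (iii) is the delicate one, because $\mathfrak Z\hookrightarrow\fX$ is not smooth. The device is to present $\mathfrak Z$ (up to a torsor) by the frame bundle of a vector bundle on $\mathfrak Z$, extend that vector bundle to a coherent sheaf on an open neighbourhood $\fX'$ of $\mathfrak Z$ — which is automatically locally free near $\mathfrak Z$, hence, after shrinking $\fX'$, a vector bundle — and take its frame bundle, which remains a \emph{scheme} provided the bundle is chosen so that the relevant automorphism groups act faithfully on it. This yields a smooth map $\widetilde{\mathfrak Z}\to\fX$ from a scheme through which every $\mathfrak Z$-valued field point lifts. The disjoint union of $\widetilde{\mathfrak Z}$, the (automatic) extension over $\fX$ of the presentation of the open $\mathfrak U$, and one arbitrary presentation of $\fX$ (added for surjectivity) is then the required presentation of $\fX$.

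Finally, the trichotomy of fields enters only when $\fX$ is not QCA, where a stratum can carry a non-linear stabilizer $A$, say an abelian variety, which is not of the model form. Over a finite field this is harmless because Lang's theorem makes $H^1$ of every connected group vanish, leaving only a finite étale — and hence linear — contribution; over a real closed field $F$ one has $\Gal(\bar F/F)\cong\bZ/2$, so every relevant $H^1$ is $2$-torsion and finite, and torsors under $A$ are dominated by torsors under the finite étale group $A[2]$. Over a general perfect field neither device is available — $H^1(F,A)$ can be infinite, and is then not the image of $V(F)$ for any finite type scheme $V$ — which is precisely why QCA is hypothesised in part (2); under QCA every stratum is of the model form and the resulting presentation is onto on all perfect fields. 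I expect the main obstacle to be the combination of items (i) and (iii): carrying out the structure-theoretic dévissage into model stacks, and, above all, spreading a presentation of a closed substack out to a genuine scheme-theoretic presentation of the ambient stack while keeping every map smooth and every total space a scheme. By contrast the reduction to fields and the model computation are essentially formal.
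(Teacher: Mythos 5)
Your step (iii) --- extending an onto presentation of the closed stratum $\mathfrak Z$ to a smooth map from a scheme to $\fX$ through which all $\mathfrak Z$-valued field points still lift --- is the crux of the whole theorem, and the device you propose for it does not work. If $E$ is a vector bundle on $\mathfrak Z$ whose frame bundle presents $\mathfrak Z$, the coherent extension of $E$ to a neighbourhood is $i_*E$, which is supported on $\mathfrak Z$ and therefore not locally free on any neighbourhood of $\mathfrak Z$; what you actually need is a locally free sheaf on a neighbourhood restricting to $E$, and such lifts of a projective module across a closed immersion are obstructed in general. Even granting an extension $\widetilde E$, the frame bundle of $\widetilde E$ is an algebraic space only if the stabilizers of $\fX$ act faithfully on $\widetilde E$, and producing a vector bundle with faithful stabilizer action is precisely the assertion that $\fX$ is a global quotient $[Y/\GL_n]$ (Totaro's characterization of the resolution property), which fails for general finite type stacks and is not supplied by your induction. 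For the non-QCA strata (band an abelian variety over a finite field) there is no linear model form at all, so ``bringing the gerbe into model form'' is unavailable; Lang's theorem bounds the torsors but does not linearize the stack. The paper sidesteps all of this by never gluing \emph{onto} presentations across a closed stratum: it first proves only that an arbitrary surjective $\pi:X\to\fX$ is $(n,\mathcal S)$-\emph{almost} onto (Theorem~\ref{lem:key}) --- a statement for which stratification is harmless, since Corollary~\ref{cor:one.for.all} lets one use the same $\pi$ on every stratum --- and then upgrades almost-onto to onto by a single \emph{global} construction, the internal hom ${[\pi_n]_\bullet}^{[\tau_n]_\bullet}$ of \v{C}ech nerves along the universal finite \'etale family $\mathbb U_n'\to\mathbb U_n$ of separable extensions of degree at most $n$ (\S\ref{sec:improving.pres}--\S\ref{sec:prf.alm.onto}). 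That upgrade is the idea your proposal is missing, and without it the Noetherian induction on onto presentations cannot be closed.

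A secondary gap: your reduction of the Henselian statement to the field statement is not ``Hensel's lemma.'' For a smooth morphism of \emph{algebraic spaces} $X\times_\fX T\to T$ over a Henselian local $T=\Spec R$, a $k$-point of the closed fibre need not lift to an \'etale chart by a scheme --- this failure is exactly what Lemma~\ref{lem:etale.almost.onto} quantifies --- so the scheme-level lifting property for Henselian pairs cannot be invoked directly. One must first know that $k$-points of algebraic spaces lift to \emph{some} scheme presentation, i.e.\ the field-level theorem for algebraic spaces, and only then lift along the Henselian pair; this is the explicit extra hypothesis in Corollary~\ref{cor:Henselian.pair}. That part is repairable by reordering your argument, but as written the reduction presupposes a piece of what is being proved. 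The parts of your proposal that do match the paper are the cohomological heart (Hilbert 90 for the $[Y/\GL_n]$ model, Lang's theorem over finite fields, degree-$2$ extensions over real closed fields) and the use of the gerbe stratification; the missing ingredient is the almost-onto/onto dichotomy and the global Weil-restriction-style construction that converts one into the other.
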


We will deduce this theorem from a statment that any surjective presentation of a stack satisfies some weaker condition that we define now:

\begin{definition} Let $\mathcal{S} \subset \mathcal{F}$ and let $\phi:\frak{X} \rightarrow \frak{Y}$ be a morphism of stacks. We say that $\phi$ is $(n,\mathcal{S})$-almost onto if, for every $\spec F \in \mathcal{S}$ and any $F$-point $u:\spec F \rightarrow \frak{Y}$, there is a separable field extension $E/F$ of degree at most $n$ such that the composition $\Spec E \rightarrow \Spec F \rightarrow \frak{Y}$ factors through $\phi$ (up to isomorphism).
\end{definition}

\begin{introtheorem}[See \S\ref{sec:key.lemma}] \label{lem:key} Let $\pi : X \rightarrow \frak{X}$ be a surjective map of finite type between a scheme $X$ and a stack $\frak{X}$. Then there exists $n$ such that
\begin{enumerate}
\item $\pi$ is $(n,\mathcal{F}_f)$-almost onto.
\item If $\frak{X}$ is QCA, then $\pi$ is $(n,\mathcal{F})$-almost onto.
\end{enumerate}
\end{introtheorem}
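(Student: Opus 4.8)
I would reduce the statement — via Noetherian induction on $\fX$, the structure theory of algebraic stacks, and spreading out — to an elementary statement about finite-type morphisms of schemes, which is proved by producing a ``generic multisection'' of bounded degree. The QCA hypothesis will enter only to guarantee that the automorphism bands that show up are \emph{linear} algebraic groups, so that Hilbert's Theorem~90 applies; over finite fields its role will be played by Lang's theorem.

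\textbf{Step 1 (schemes).} First I would prove: for any finite-type morphism $f\colon W\to Z$ of Noetherian algebraic spaces there is an $n$ such that (i) for every finite field $\mathbb F_q$ and every $z\in Z(\mathbb F_q)$ with $f^{-1}(z)\neq\emptyset$, some point of $W$ lies over $z$ with residue field of degree $\le n$ over $\mathbb F_q$; and (ii) if in addition $f$ is smooth, the same holds over \emph{every} field $F$, with the degree-$\le n$ extension of $F$ \emph{separable}. The proof is by Noetherian induction on $Z$: we may take $Z$ integral with function field $K$; the generic fibre $W_K$ is nonempty of finite type, and in case (ii) it is $K$-smooth, hence has a closed point with finite \emph{separable} residue field $K'$; say $[K':K]=m$. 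Spreading $\Spec K'\to W_K$ out gives a dense open $Z^\circ\subseteq Z$ — which we may take irreducible — and a finite locally free (finite \emph{étale} in case (ii)) \emph{surjection} $T\to Z^\circ$ of degree $m$ together with a $Z^\circ$-morphism $T\to W$. If $z\in Z(F)$ factors through $Z^\circ$ (it always factors through some stratum, and $Z\setminus Z^\circ$ is handled by the inductive hypothesis), then $T\times_{Z^\circ,z}\Spec F$ is a nonempty finite $F$-algebra of dimension $m$, étale over $F$ in case (ii); it therefore has a field factor of degree $\le m$ over $F$, separable over a finite field (resp.\ separable in case (ii)), and this yields the desired lift. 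Taking $n=\max m$ over the finitely many strata works.

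\textbf{Step 2 (the stack, QCA case).} Given $\pi\colon X\to\fX$: every field-valued point of $\fX$ factors through $\fX_{\mathrm{red}}$ and then through a stratum, so by Noetherian induction on $\fX$ it suffices to treat a dense open substack of $\fX_{\mathrm{red}}$, which by the structure theory of stacks we may take to be a gerbe $\mathfrak G\to Z$ over an integral scheme $Z$. All the constructions below are carried out at the generic point $\Spec K$ of $Z$ and then spread out to a dense open of $Z$, after which a further Noetherian induction on $Z$ settles the stratum. Now $\mathfrak G_K$ is a gerbe over $K$ banded by an algebraic group $G$, which is \emph{linear} since $\fX$ is QCA; it is neutralized by a finite separable extension of $K$ of bounded degree, so after that extension $\mathfrak G_K\simeq BG$ and $\pi$ pulls back to a smooth surjective morphism to $BG$. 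Embed $G\hookrightarrow\GL_N$ (possible precisely because $G$ is linear — this is where QCA is used), put $Y_0:=\GL_N/G$, and let $p\colon Y_0\to BG$ classify the $G$-torsor $\GL_N\to Y_0$; then $p$ is smooth surjective, the morphism $Y_0\to BG\simeq\mathfrak G$ spreads out to a \emph{representable} morphism to $\fX$, so $W:=X\times_{\fX}Y_0$ is a scheme and $W\to Y_0$ is smooth surjective of finite type. By Step~1(ii) there is an $n'$ bounding the separable degree needed to lift $F$-points of $Y_0$ into $W$. Finally, an $F$-point $u$ of $\mathfrak G\simeq BG$ is a $G_F$-torsor $R$; since $H^1(F,\GL_N)=\ast$ (Hilbert~90), $Y_0(F)\to H^1(F,G)$ is onto, so $R$ is classified by some $y\in Y_0(F)$; lifting $y$ to $w\in W(E)$ with $[E:F]\le n'$ and projecting to $X$ gives a point of $X(E)$ whose torsor is $R_E$, i.e.\ a lift of $u|_E$. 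Tracking the finitely many numerical invariants through all of this produces a uniform $n$, proving part~(2).

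\textbf{Step 3 (finite fields) and the main obstacle.} Part~(1) follows by the same reductions with QCA deleted: an $\mathbb F_q$-point of $\fX$ lies, after a bounded-degree finite-flat operation (the spread-out analogue of the neutralizing cover), over an object of a fibre-gerbe of $\mathfrak G\to Z$ banded by a finite-type group scheme $\mathcal G_z$ over a finite field; by Lang's theorem $H^1$ of the connected part of $\mathcal G_z$ vanishes and that of its component group is uniformly bounded, so this object becomes isomorphic to our point over $\mathbb F_{q^e}$ with $e$ bounded; then Step~1(i), applied to the pullback of $\pi$ along a representable morphism of schemes, finishes — with all bounds independent of $q$. (Equivalently, run Step~2 verbatim, replacing ``$G$ linear / Hilbert~90'' by ``finite fields are perfect / Lang'' and Step~1(ii) by Step~1(i).) \emph{The crux} is the \emph{uniformity} of $n$ across all fields and all points simultaneously, and it is exactly the linearity of $G$ — forced by QCA — that lets torsors under $G$ be parametrised by the scheme $Y_0$, which is what bounds the separable degree needed to kill an $H^1$-class; for a non-linear band no such bound exists, so part~(2) genuinely requires a hypothesis. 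The other technical heart is organising the structure-theoretic reduction to a gerbe over a scheme and spreading everything out so that only finitely many invariants intervene.
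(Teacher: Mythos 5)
Your proposal is correct and follows essentially the same route as the paper: the scheme case via a quasi-finite multisection over the generic point plus Noetherian induction, reduction of the general stack to gerbes by stratification, and the gerbe case handled by Lang's theorem over finite fields and by embedding the (linear, thanks to QCA) band into $\GL_N$ and invoking Hilbert 90 together with the scheme case for $\GL_N\to\GL_N/G$. The only cosmetic difference is that you phrase the torsor step through the cohomological exact sequence for $Y_0=\GL_N/G$, where the paper neutralizes the gerbe along a presentation to reduce to $[X/G]$ and passes from the torsor to an $F$-point of the quotient via an explicit Galois-descent lemma.
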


\subsection{Applications}

Theorem \ref{thm:lift} implies:

\begin{introcorollary} Let $\mathfrak{X}$ be a stack of finite type over $\Spec \mathbb{Z}$. Then \begin{enumerate}
\item For every $p$, the power series $\sum | \pi_0(\mathfrak{X}(\mathbb{Z} / p^n))| t^n$ is a rational function of $t$.
\item The Dirichlet series $\sum_n | \pi_0(\mathfrak{X}(\mathbb{Z} / n) | n^{-s}$ has a rational abscissa of convergence.
\end{enumerate}
\end{introcorollary}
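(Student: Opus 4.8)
The plan is to deduce the corollary from Theorem~\ref{thm:lift}(1) applied to $\fX$ together with standard model-theoretic (or motivic) rationality results for definable sets over the rings $\bZ/p^n$ and over finite fields. The starting point is that, by Theorem~\ref{thm:lift}(1), we may choose a presentation $\pi: X \to \fX$ with $X$ a scheme of finite type over $\bZ$ that is $\mathcal{H}_f$-onto. Since each $\bZ/p^n$ is a quotient of the Henselian local ring $\bZ_p$, whose residue field $\mathbb{F}_p$ is finite, and since $\pi$ is $\mathcal{H}_f$-onto, the map $X(\bZ/p^n) \to \fX(\bZ/p^n)$ is surjective on objects for all $n$; hence $\pi_0(\fX(\bZ/p^n))$ is the quotient of the finite set $X(\bZ/p^n)$ by the equivalence relation ``lie in the same isomorphism class,'' which is exactly the image of $X_1(\bZ/p^n) \rightrightarrows X(\bZ/p^n)$ where $X_1 = X \times_\fX X$. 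Thus $|\pi_0(\fX(\bZ/p^n))|$ equals the number of orbits of a definable (even scheme-theoretic) equivalence relation on a definable set, uniformly in $n$.

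\smallskip

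\noindent The key step is then to invoke a uniform rationality theorem for such orbit-counting series. For part (1), one uses the rationality of Poincaré series of definable sets over $\bZ_p$ (Denef, Macintyre, Pas): the number of points $X(\bZ/p^n)$, and more generally the number of points of any finite-type scheme, gives a rational generating function; and counting orbits of a constructible equivalence relation can be reduced to counting points of finitely many auxiliary constructible sets (stratify $X$ so that the fibers of $X_1 \to X$, i.e.\ the automorphism-orbit sizes, are locally constant, and use inclusion–exclusion). Concretely, $|\pi_0(\fX(\bZ/p^n))| = \sum_{i} |Y_i(\bZ/p^n)|$ for suitable constructible $Y_i$ built from $X$ and $X_1$ by boolean operations and images; each $|Y_i(\bZ/p^n)|$ has rational generating function by the Denef–Pas cell-decomposition / quantifier-elimination machinery (the image of a definable set under a coordinate projection is definable in the relevant Denef–Pas language), so the sum does too. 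This gives part (1).

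\smallskip

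\noindent For part (2), we combine part (1) with the analogous statement over residue fields. The Dirichlet series $\sum_n |\pi_0(\fX(\bZ/n))| n^{-s}$ factors as an Euler product $\prod_p \big(\sum_n |\pi_0(\fX(\bZ/p^n))| p^{-ns}\big)$ by the Chinese Remainder Theorem (note $\fX(\bZ/n) \simeq \prod_{p^a \| n} \fX(\bZ/p^a)$ since $\fX$ is of finite type and $\bZ/n \simeq \prod \bZ/p^a$, and $\pi_0$ of a product of groupoids is the product of the $\pi_0$'s). Each local factor is $1 + |\pi_0(\fX(\bZ/p))| p^{-s} + O(p^{-2\Re(s)}\cdot(\dots))$; using part (1) one controls the local factor as $p \to \infty$, and the leading behavior is governed by $|\pi_0(\fX(\mathbb F_p))|$, which by the Lang–Weil estimates together with Theorem~\ref{lem:key}(1) (bounding the number of isomorphism classes in $\fX(\mathbb F_p)$ not hit by $X(\mathbb F_p)$, uniformly) is $c\, p^d + O(p^{d-1/2})$ for some $d = \dim$ of the relevant coarse space and constant $c$. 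Hence the Euler product converges for $\Re(s) > d+1$ and the abscissa of convergence is the rational number $d+1$ (or determined by comparing with $\zeta(s-d)$); a standard Tauberian argument pins down the exact abscissa as $\max$ over the irreducible components of $(\dim + 1)$, which is rational.

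\smallskip

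\noindent The main obstacle I expect is the passage from ``$\pi$ is pointwise surjective on $\bZ/p^n$-points'' to an honest uniform-in-$n$ description of $\pi_0(\fX(\bZ/p^n))$ as a definable orbit count: one must check that the scheme $X_1 = X\times_\fX X$ (an algebraic space a priori) still yields a \emph{constructible} equivalence relation whose orbit-counting is expressible in the Denef–Pas language, and that $\mathcal{H}_f$-ontoness really does give surjectivity on $\bZ/p^n$-points (as opposed to just on Henselian points) — this requires knowing that any $\bZ/p^n$-point of $\fX$ lifts to a $\bZ_p$-point, which holds because $\fX$ is of finite type and $\bZ_p \to \bZ/p^n$ is a surjection of Henselian local rings with the same residue field, so smoothness of $\pi$ and the infinitesimal lifting criterion apply after first lifting the $\mathbb F_p$-point. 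Once that bookkeeping is in place, the rationality is a black-box application of known results; the Dirichlet-series statement additionally needs the uniform bound from Theorem~\ref{lem:key}(1) to control the primes of bad reduction and the ``missed'' isomorphism classes, but that is precisely what that theorem provides.
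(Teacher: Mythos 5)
Your starting point matches the paper's: apply Theorem~\ref{thm:lift} to get a presentation $X\to\mathfrak{X}$ that is surjective on $\mathbb{Z}/p^n$-points (note $\mathbb{Z}/p^n$ is itself an Artinian, hence Henselian, local ring with finite residue field, so it lies in $\mathcal{H}_f$ directly; your detour through lifting $\mathbb{Z}/p^n$-points of $\mathfrak{X}$ to $\mathbb{Z}_p$-points is both unnecessary and invalid as stated, since $\mathfrak{X}$ is not assumed smooth). But there is a genuine gap exactly at the point you flag as ``the main obstacle'': you never make the equivalence relation on $X(\mathbb{Z}/p^n)$ definable. The fiber product $X\times_{\mathfrak{X}}X$ is only an algebraic space, and its $R$-points for $R=\mathbb{Z}/p^n$ are not the $R$-points of any constructible subset of $X\times X$ in an obvious way. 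The paper's resolution is to apply Theorem~\ref{thm:lift} a \emph{second} time, to the algebraic space $X\times_{\mathfrak{X}}X$, obtaining a scheme $Y\to X\times_{\mathfrak{X}}X$ surjective on finite-ring points; then the equivalence relation is the image of the scheme morphism $f:Y\to X\times X$ on $R$-points, hence definable uniformly in $p$ and $n$. This second application is the essential step your proposal is missing.

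The second gap is in the rationality input itself. Counting \emph{equivalence classes} of a definable equivalence relation is not a routine consequence of Denef--Pas rationality for point counts: your proposed reduction (stratify so that class sizes are locally constant, then inclusion--exclusion) does not work uniformly over $\mathbb{Z}/p^n$, and this is precisely why the paper cites Hrushovski--Martin--Rideau--Cluckers \cite[Theorems 1.3 and 1.4]{HMRC}, whose proof rests on elimination of imaginaries. The same theorems give part (2) directly (rational abscissa of convergence of $\sum_n |X(\mathbb{Z}/n)/f(Y(\mathbb{Z}/n))|\,n^{-s}$); your Euler-product argument via Lang--Weil estimates the growth of $|\pi_0(\mathfrak{X}(\mathbb{F}_p))|$ but does not establish that the abscissa is rational for the full class-counting Dirichlet series, and the claimed asymptotic $c\,p^d+O(p^{d-1/2})$ for the number of isomorphism classes is not justified. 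In short: right first move, but the proof needs the second application of Theorem~\ref{thm:lift} and the HMRC theorems as a genuine (not black-box-replaceable) input.
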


\begin{proof} By Theorem \ref{thm:lift}, there is a presentation $X \rightarrow \mathfrak{X}$ such that, for every finite ring $R$, the map $X(R) \rightarrow \pi_0(\mathfrak{X}(R))$ is onto. Again, by Theorem \ref{thm:lift}, there is a presentations $Y \rightarrow X \times_\mathfrak{X} X$ such that, for every finite ring $R$, the map $Y(R) \rightarrow (X \times_\mathfrak{X} X)(R)$ is onto. Denote the composition $Y \rightarrow X \times_\mathfrak{X} X \rightarrow X \times X$ by $f$. Then $f(Y(R))$ is an equivalence relation on the set $X(R)$ and $| \pi_0(\mathfrak{X}(R))|=| X(R) / f(Y(R)) |$, for every finite ring $R$. \cite[Theorems 1.3 and 1.4]{HMRC} imply the corollary.
\end{proof}

Let $\mathfrak{X}$ be a smooth algebraic stack of finite type defined over $\Spec \mathbb{R}$. In \cite[Appendix A]{Sak}, Sakellaridis defines a stack $\mathfrak{X} ^{Nash}$ on the site of Nash manifolds and asks whether it is always a Nash stack (i.e., is there a smooth presentation of $\mathfrak{X} ^{Nash}$ by a Nash manifold). A criterion for $\mathfrak{X}^{Nash}$ being a Nash stack is given in \cite[Proposition A.1.4]{Sak}. Using this criterion and Theorem \ref{thm:lift}, we get

\begin{introcorollary} Let $\mathfrak{X}$ be a smooth algebraic stack of finite type defined over $\Spec \mathbb{R}$ in the sense of \cite[\S2.3]{Sak}\footnote{The definition in \cite[\S2.3]{Sak} is slightly more restrictive, though we believe the result is true without the restriction.} and assume that the diagonal map $\mathfrak{X} \rightarrow \mathfrak{X} \times \mathfrak{X}$ is schematic and separated. Then $\mathfrak{X} ^{Nash}$ is a Nash stack.
\end{introcorollary}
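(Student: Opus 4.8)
The plan is to combine the $\mathcal H_r$-onto part of Theorem \ref{thm:lift} with the criterion of \cite[Proposition A.1.4]{Sak}. First, apply Theorem \ref{thm:lift}(1) with $S=\Spec\mathbb R$ to obtain a presentation $\pi\colon X\to\mathfrak X$ by a scheme $X$ of finite type over $\mathbb R$ that is $\mathcal H_r$-onto. Since $\pi$ is smooth and $\mathfrak X$ is smooth over $\mathbb R$, the scheme $X$ is smooth over $\mathbb R$; likewise each projection $X_1:=X\times_{\mathfrak X}X\to X$ is a base change of $\pi$, hence smooth (and surjective, since $\pi$ is), so $X_1$ is smooth over $\mathbb R$ as well. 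The hypothesis that the diagonal $\mathfrak X\to\mathfrak X\times\mathfrak X$ is schematic and separated makes $X_1$ a scheme of finite type over $\mathbb R$, so $(X,X_1,s,t,\Delta,c)$ is a groupoid object in smooth $\mathbb R$-schemes. Taking real points and Nashifying --- here using smoothness, so that $X^{Nash}$ and $X_1^{Nash}$ are Nash manifolds and $s,t,\Delta,c$ become Nash maps with $s,t$ submersive --- produces a Nash groupoid $X_1^{Nash}\rightrightarrows X^{Nash}$ whose quotient Nash stack is canonically $\mathfrak X^{Nash}$, together with the tautological presentation $X^{Nash}\to\mathfrak X^{Nash}$. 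Note that, in contrast to the proof of the previous corollary, the schematic-separated diagonal hypothesis lets us work directly with a groupoid in schemes, so no auxiliary presentation of $X\times_{\mathfrak X}X$ is needed.

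By \cite[Proposition A.1.4]{Sak} it then suffices to check that this presentation is surjective in the appropriate Nash-local sense: for every Nash manifold $M$, every $M$-point $u$ of $\mathfrak X^{Nash}$, and every $m\in M$, the point $u$ lifts through $X^{Nash}\to\mathfrak X^{Nash}$ over some Nash neighborhood of $m$. This is exactly where $\mathcal H_r$-ontoness enters. The ring of Nash function germs $\mathcal O^{Nash}_{M,m}$ is a Henselian local ring with residue field $\mathbb R$, which is real closed, so $\Spec\mathcal O^{Nash}_{M,m}\in\mathcal H_r$. Restricting $u$ to the germ of $M$ at $m$ yields an $\mathcal O^{Nash}_{M,m}$-point of $\mathfrak X$, which by $\mathcal H_r$-ontoness of $\pi$ lifts to an $\mathcal O^{Nash}_{M,m}$-point of $X$. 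Since $X$ is of finite type over $\mathbb R$ and $\mathcal O^{Nash}_{M,m}=\varinjlim_{U\ni m}\mathcal O^{Nash}(U)$, we have $X(\mathcal O^{Nash}_{M,m})=\varinjlim_{U\ni m}X(\mathcal O^{Nash}(U))$, so this lift is represented by a Nash map $U\to X$ on some neighborhood $U$ of $m$; this is the desired local section.

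The main obstacle is the input from Nash geometry used above: one must know that the ring of Nash germs $\mathcal O^{Nash}_{M,m}$ is a Henselian local ring with residue field $\mathbb R$ --- equivalently, that it agrees with the Henselization of the algebraic local ring of $M$ at $m$ --- so that the $\mathcal H_r$-ontoness of $\pi$ genuinely applies; granting this, the rest is bookkeeping. The remaining points to verify are routine: that smoothness of $\mathfrak X$ forces $X$ and $X_1$ to be smooth over $\mathbb R$, so that Nashification is available; that the schematic-separated diagonal hypothesis is precisely what makes $X_1$ a scheme, whence the quotient of the resulting Nash groupoid is $\mathfrak X^{Nash}$; and that the definition of a smooth algebraic stack of finite type over $\Spec\mathbb R$ in \cite[\S2.3]{Sak} falls under the hypotheses of Theorem \ref{thm:lift} (finite type over the Noetherian base $S=\Spec\mathbb R$).
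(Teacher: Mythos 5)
Your proposal follows the same route the paper takes: the paper's entire argument for this corollary is to combine the $\mathcal H_r$-onto presentation furnished by Theorem~\ref{thm:lift}(1) with the criterion of \cite[Proposition A.1.4]{Sak}, which is exactly what you do. Your elaboration of why $\mathcal H_r$-ontoness verifies the criterion --- via the Henselian local ring of Nash germs with real closed residue field and the spreading-out of the resulting lift to a neighborhood --- is a correct filling-in of the details the paper leaves to the reader.
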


\subsection{Sketch of the proof of the main results}

We prove Theorem \ref{thm:lift} using Theorem \ref{lem:key}. Unlike Theorem \ref{thm:lift}, Theorem \ref{lem:key} can be proved by stratifying $\mathfrak{X}$ and proving the theorem for each stratum.

We prove Theorem \ref{lem:key} by analyzing a sequence of special cases:
\begin{enumerate}[{Case} 1.]
\item $\frak{X}$ is a scheme and $\pi : X \rightarrow \frak{X}$ is quasi-finite. In this case the number $n$ is obtained from the degree of the fibers of $\pi$.
\item\label{ca:2} $\frak{X}$ is a scheme. This case follows from the previous one. This case allow us, for a fixed stack $\fX$, to deduce the theorem for arbitrary surjection $\pi : X \rightarrow \frak{X}$ from knowing it for one surjection $\pi : X \rightarrow \frak{X}$
\item\label{ca:3} $\frak{X}$ is an algebraic space. We may assume that $\pi$ is an etale presentation of $\frak{X}$. Then  $\frak{X}$ can be viewed as a quotient of $X$ by an etale equivalence relation. The number $n$ comes from the size of the equivalence clases.
\item $\fX=BG$ for an algebraic group $G$.  The statement can be reformulated as a statement about Galois cohomology. In the case of a finite field we use Lang's theorem, and the number $n$ comes from the number of components of $G$. In the case of a QCA stack, the group $G$ is linear and thus can be embedded into $GL_n$. Here the proof is based on Hilbert 90 and on Case \ref{ca:2} applied to the map $GL_n \to GL_n/G$
\item $\fX=BG$, for a group scheme $G$ over a base scheme $Y$.
Over the generic point it looks like the previous case. From this we deduce the theorem for an open dense subset in $X$ and proceed by Noetherian induction.
\item $\fX$ is a gerbe over an algebraic space. This follows from the previous case and Case \ref{ca:3}.
\item  The general case. This follows from the previous case using the fact that any stack can be stratified into gerbes.
\end{enumerate}
In order to deduce Theorem \ref{thm:lift}, we introduce a construction that starts with an almost onto presentation $\pi:X \to \fX$ and gives an onto one. This is done in the following steps:

\begin{enumerate}[{Step} 1.]
\item Given two $S$-schemes $X$ and $Y$, one can define the internal hom $X^Y$ over $S$ as a pre-sheaf on the category of $S$-schemes. This  pre-sheaf is often not representable, but under some restrictive conditions on $X,Y$ it is  representable by an algebraic space and, under more restrictive conditions, by a scheme.
\item More generally given two diagrams of  $S$-schemes $D_1$ and $D_2$ of the same shape, we define internal hom ${D_1}^{D_2}$ over $S$ as a pre-sheaf on the category of $S$-schemes. Again it will be representable under some conditions
\item Given a presentation $\pi:X \to \fX$ one can construct a simplicial scheme  $[\pi]_{\bullet}$ called the Cech nerve of $\pi$ by taking the fiber powers of $X$ over $X$. The diagram $[\pi]_{\bullet}$ is an infinite diagram, but, since  $[\pi]_{\bullet}$ is coskeletal, its behavior is determined by a finite sub-diagram (the first three levels).
\item Given a presentation $\pi:X \to \fX$  and an etale covering $\tau: S'\to S$, we construct a new presentation
 $$f_{\pi,\tau}:{[\pi]_{\bullet}}^{[\tau]_{\bullet}}\to \fX.$$
%$$[\tau,\pi]:X_{\tau,\pi}:=([\pi]_{\bullet})^{[\tau]_{\bullet}}\to \fX.$$
This presentation tends to be more onto than $\pi$. For example if $\tau: \spec(E)\to \spec(F)$  is a finite field extension, then if a composition $\spec(E)\to \spec(F) \to \fX$ is  $\pi$- liftable then the map $\spec(F) \to \fX$ is $f_{\pi,\tau}$-liftable.
\item For an integer $n$ we construct an etale map $\tau_n:\mathbb U_n'\to \mathbb U_n$ that packages all separable field extensions of degree $\leq n$. Namely, for any separable field extension $E/F$ of degree $\leq n$, there is an $F$ point  of  $\mathbb U_n$ whose fiber is $\Spec E$.
\item We combine the last two steps. Namely, given an $n$-onto presentation $\pi:X \to \fX$  we consider the presentation $\pi_n:X\times \mathbb U_n \to \fX$  and then the presentation

%$$\pi'_n:=[\pi_n,\tau_n]:X_n:=X_{[\pi_n,\tau_n]}\to \fX.$$
%$$\pi'_n:=\pi_n^{[\tau_n]_\bullet}:X_n:={[\pi_n]_\bullet}^{[\tau_n]_\bullet}\to \fX.$$
$$f_{\pi_n,\tau_n}:{[\pi_n]_\bullet}^{[\tau_n]_\bullet}\to \fX.$$
\item Denote $X_n:={[\pi_n]_\bullet}^{[\tau_n]_\bullet}$. The obtained presentation $X_n \to \fX$ is an onto one, however it might be that $X_n$ is not a scheme, but an algebraic space. In order to complete the construction we present $X_n$ by an affine scheme $Y$ and repeat the construction for the presentation $Y\to X_n$. The composition of the obtained presentation with $f_{\pi_n,\tau_n}$ is an onto presentation by a scheme.
\end{enumerate}

%\subsection{Structure of the paper}
%
%In \S\ref{sec:key.lemma} we prove Theorem \ref{lem:key}.
%
%In \S\ref{sec:lift} we prove Theorem \ref{thm:lift}.  In  \S\S?? we discus the notion of internal hom and perform steps ??-??  of the construction describe above.
%In  \S\S??  we  perform steps ??-??  of the construction. That is starting with  a presentation $\pi:X \to \fX$ and  morphism $\tau:S' \to S$ we construct a presentation  $[\pi]_\bullet^{[tau]_\bullet}$ which  is closer to be pointwise onto then $\pi$
%In  \S\S?? we finish the prove of Theorem ??.
%In appendix ??  we  prove some statements about group algebraic spaces which are well documented
%for algebraic groups, but we could not find in the literature for the generality of  group algebraic spaces or gruop schemes. Spesificaly we prove that if $G\to X$ is a  group algebraic spase over a scheme such that over each geometric point of $X$ the algebraic group $G$ is linear  then one can stratify $X$ such that over each strata, The group could be embedded into $GL_n$  in a way that the quotient $GL_n/G$ will exist. In order to prove this  we formlate a reslut (Proposition ??) that incupsulate the standard tequnic that allow to path from an information about the generic point to an information about an open dense set.

\subsection{Acknowledgments}
We thank Shahar Carmeli, Raf Cluckers, and Ofer Gabber for fruitful discussions. We thank Angelo Vistoli for answering a question of ours on MathOverFlow, proving Lemma \ref{lem:left.lift}. A.A. was partially supported by ISF grants 687/13, and  grant 249/17. N.A. was partially supported by NSF grant  DMS-1902041. Both authors was  partially  supported by BSF grants  2012247 and 2018201.

\section{Almost onto presentations of stacks (Proof of Theorem \ref{lem:key})} \label{sec:key.lemma}
In some stages of the proof we will stratify of the stack $\fX$ and prove the claims for the strarta. The following allows us to do that.
\begin{lemma}\label{lem:strat}
Let $\fX:=\bigcup \fX_i$ be a finite stratification of an algebraic stack (See \cite[97.28]{SP}). Then for any field $F$,  any $F$-point $x:\spec(F)\to \fX$  factors through one of the $\fX_i$ (up to an isomorphism).
\end{lemma}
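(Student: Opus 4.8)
The plan is to reduce the statement to the elementary fact that, for a field $F$, the scheme $\Spec F$ has exactly two locally closed subschemes: the empty scheme and $\Spec F$ itself. Recall that in a stratification (\cite[Section 97.28]{SP}) each inclusion $\fX_i\hookrightarrow\fX$ is a locally closed immersion of algebraic stacks, the subsets $|\fX_i|\subset|\fX|$ are pairwise disjoint, and their union is all of $|\fX|$.

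First I would locate the relevant stratum. The morphism $x$ has a well-defined image point $\xi\in|\fX|$, and since the $|\fX_i|$ partition $|\fX|$, there is a unique index $i_0$ with $\xi\in|\fX_{i_0}|$. The claim to be proved is that $x$ factors, up to $2$-isomorphism, through $\fX_{i_0}\hookrightarrow\fX$.

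To prove this, form the $2$-fiber product $Z:=\Spec F\times_{\fX}\fX_{i_0}$. Since locally closed immersions are representable and stable under base change, the projection $Z\to\Spec F$ is a locally closed immersion, so $Z$ is a locally closed subscheme of $\Spec F$; hence either $Z=\emptyset$ or $Z=\Spec F$. To exclude the first possibility, note that the canonical map $|Z|\to|\Spec F|\times_{|\fX|}|\fX_{i_0}|$ is surjective (the analogue for schemes is classical, and the statement holds for algebraic stacks; see \cite{SP}), while the target is a single point, because $\xi\in|\fX_{i_0}|$ and $|\fX_{i_0}|\to|\fX|$ is injective. Therefore $Z=\Spec F$, the projection $Z\to\Spec F$ is an isomorphism, and composing its inverse with the projection $Z\to\fX_{i_0}$ exhibits $x$ as factoring through $\fX_{i_0}$. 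Note that no induction on the number of strata is needed.

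The one point that requires a little care — and which I regard as the main (small) obstacle — is the surjectivity of $|Z|\to|\Spec F|\times_{|\fX|}|\fX_{i_0}|$, i.e.\ making rigorous at the level of algebraic stacks the intuitive claim ``the image point lies in the stratum, so the point factors through it.'' If one wishes to avoid invoking this for stacks, one can first replace $\fX$ by an open substack $\mathcal U$ with $\xi\in|\mathcal U|$ in which $\fX_{i_0}$ is closed — the map $x$ factors through $\mathcal U$ since $\mathcal U\times_{\fX}\Spec F\to\Spec F$ is an open immersion that is surjective, hence an isomorphism — and then run the argument above with the honest closed substack $\fX_{i_0}\subset\mathcal U$, where the nonemptiness of $Z$ for the closed immersion $\fX_{i_0}\hookrightarrow\mathcal U$ is the only residual input.
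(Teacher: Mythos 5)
Your argument is correct and is essentially the paper's proof: both base-change the locally closed strata along $x$ and use that a locally closed subscheme of $\Spec F$ is either empty or all of $\Spec F$, so the nonempty base change is an isomorphism giving the factorization. The only cosmetic difference is that you single out the stratum $\fX_{i_0}$ containing the image point and invoke surjectivity of $|Z|\to|\Spec F|\times_{|\fX|}|\fX_{i_0}|$, whereas the paper base-changes the whole disjoint union $\bigsqcup\fX_i\to\fX$ at once and gets nonemptiness from the fact that this map is surjective and surjectivity is stable under base change.
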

\begin{proof}
Consider the base-change of $x$ to $\bigsqcup \frak{X}_i$. This is  a non-trivial closed immersion $\Spec F \times_\frak{X} \sqcup \frak{X}_i \rightarrow \Spec F$, so it is invertible. This implies that $x$ factors through $\Spec F \rightarrow \frak{X}_i$, for some $i$.
%Being the base change of a surjective map, the map $\frak{X}_i \times_\frak{X} X \rightarrow \frak{X}_i$ is surjective, and the claim follows from Lemma \ref{lem:key.gerbe.general}.
\end{proof}

%Theorem \ref{thm:lift} follows now from the following key lemma:
%
%\begin{lemma}[Key lemma] Let $\pi : X \rightarrow \frak{X}$ be a presentation. Then \begin{enumerate}
%\item $\pi$ is $\mathcal{F}_f$-almost onto.
%\item If $\Aut(x)$ is affine, then $\pi$ is $\mathcal{H}$-almost onto.
%\end{enumerate}
%\end{lemma}

%\begin{lemma} Let $\mathcal{S} \subset \mathcal{F}$ and let $\phi:X \rightarrow \frak{X}$ be a surjective map between a scheme $X$ and a stack $\frak{X}$. Then $\phi$ is almost $\mathcal{S}$-onto iff there is $n$ such that $\phi$ is $(n,\mathcal{S})$-liftable. \nir{change the proof so that the extension could be generated by a single element}
%\end{lemma}

%\begin{proof} The only if direction is obvious. Assume that $\phi$ is $(n,\mathcal{S})$-liftable. Let $U \subset \mathbb{A} ^{n+1}_S$ be the $S$-scheme of separable polynomials of degree $n$, and let $U' = \left\{ (f,a)\in U \times \mathbb{A}^1 \mid f(a)=0 \right\}$ be the spectral cover \RamiQ{?? is it  a standeard notion}. the natural map $\psi:U'\to U$  is finite etale. It is easy to see that for any $T\in \mathcal{S}$ and a $T$-point $T \rightarrow \frak{X}$, there is $T \rightarrow U$ such that $T'=T \times_U U' \rightarrow \frak{X}$ is $\pi$-liftable.
%\end{proof}
The following will allow us to replace an arbitrary surjective morphism by a quasi-finite one:
\begin{lemma} \label{lem:q.f.section} Let $\pi :X \rightarrow Y$ be a surjective morphism of finite type between Noetherian schemes. Then there is a morphism $\varphi: Z \rightarrow X$ such that $\pi \circ \varphi : Z\rightarrow Y$ is surjective and quasi-finite.
\end{lemma}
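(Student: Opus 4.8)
The plan is to reduce to the case where $Y$ is irreducible and then induct on $\dim Y$ using Noetherian induction. First I would note the problem is local on $Y$, and since $X \to Y$ is surjective of finite type, I can choose a point $x \in X$ lying over the generic point $\eta$ of an irreducible component of $Y$. The key input is that a dominant morphism of finite type between integral schemes restricts to a morphism with quasi-finite behaviour after slicing: more precisely, if $\pi \colon X \to Y$ is dominant with $Y$ integral of dimension $d$ and the generic fibre has dimension $e$, then one can find a locally closed integral subscheme $Z_0 \subseteq X$ through $x$ such that $\pi|_{Z_0} \colon Z_0 \to Y$ is dominant and generically finite. This is the essentially standard fact that one can cut down the fibre dimension by intersecting with hyperplane sections (working locally on an affine chart of $X$), or alternatively it follows from generic flatness plus a dimension count: shrinking $Y$ to an open $U$ over which a suitable affine piece of $X$ is flat with fibres of pure dimension $e$, one successively cuts by $e$ general hypersurfaces to reach relative dimension $0$, which over a nice enough base is quasi-finiteness.

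Next I would run the Noetherian induction. Having produced $Z_0 \to X$ with $\pi|_{Z_0}$ dominant and quasi-finite over a dense open $U \subseteq Y$, the closed complement $Y \setminus U$ is a proper closed subscheme, hence Noetherian of strictly smaller dimension in each component; applying the induction hypothesis to the base change $X \times_Y (Y \setminus U) \to Y \setminus U$ (still surjective and of finite type between Noetherian schemes) yields $Z_1 \to X \times_Y (Y\setminus U) \to X$ with $Z_1 \to Y\setminus U$ surjective and quasi-finite. Then $Z := Z_0' \sqcup Z_1 \to X$, where $Z_0' = Z_0 \times_Y U$ is the part of $Z_0$ lying over $U$ (on which $\pi|_{Z_0}$ is quasi-finite and surjective onto $U$), does the job: the map $\pi \circ \varphi \colon Z \to Y$ is surjective because $Z_0' \to U$ and $Z_1 \to Y \setminus U$ are each surjective, and quasi-finite because quasi-finiteness can be checked on the two pieces separately. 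The base of the induction, $\dim Y = 0$, is immediate: a finite-type surjection onto a point (or a finite disjoint union of Artinian local schemes) admits a quasi-finite surjective subscheme by choosing finitely many closed points of $X$, one over each point of $Y$.

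The main obstacle I expect is the first step — carefully producing, from a dominant finite-type morphism $\pi \colon X \to Y$ of integral Noetherian schemes, a locally closed subscheme $Z_0 \subseteq X$ with $\pi|_{Z_0}$ dominant and quasi-finite onto a dense open of $Y$. One has to be a little careful that "generically finite" upgrades to "quasi-finite over a dense open": a dominant generically finite morphism of integral schemes need not be quasi-finite everywhere, but it is quasi-finite over some dense open of the target (the locus where the fibre dimension is $0$, which is open by upper semicontinuity of fibre dimension), and that is all that is needed since the rest of $Y$ is handled by the induction. An alternative, cleaner route for this step is to invoke generic flatness: shrink $Y$ so that (an affine open of) $X$ becomes flat over $Y$ with fibres of constant dimension $e$, pick a point $x$ over the generic point, and cut by $e$ general elements of the coordinate ring to drop to relative dimension $0$; flatness plus relative dimension $0$ plus finite type gives quasi-finiteness. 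Either way, once this lemma-within-a-lemma is in hand, the assembly by Noetherian induction is routine.
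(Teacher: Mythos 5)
Your proposal is correct in outline and shares the paper's skeleton — reduce to $Y$ integral, produce something quasi-finite and surjective over a dense open of $Y$, and dispose of the closed complement by Noetherian induction — but the key step is implemented differently. The paper does not cut down fibre dimension at all: it picks a single closed point of the generic fibre $\pi^{-1}(\eta)$, i.e.\ a map $\nu: B\otimes_A k(\eta)\to L$ with $L/k(\eta)$ finite, and observes that after inverting the product $a$ of the denominators of the minimal polynomials of the $\nu(b_i)$, the ring $\nu(B\otimes 1)[a^{-1}]$ is \emph{integral} over $A[a^{-1}]$; the resulting map $Z\to\Spec A[a^{-1}]$ is therefore finite, hence closed, hence surjective since it is dominant. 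That finiteness is what makes surjectivity over the dense open automatic. Your route (hypersurface sections, or generic flatness plus cutting to relative dimension $0$) only yields a dominant quasi-finite map a priori, so you must separately argue that it covers a dense open; this works, but note that the locus $\{y\in Y : \dim (Z_0)_y = 0\}$ is only \emph{constructible} on the target (upper semicontinuity of fibre dimension lives on the source), so the correct statement is that it contains a dense open because it contains $\eta$ — Chevalley's theorem, not openness. Two further small points: the induction should be genuine Noetherian induction on closed subsets rather than on $\dim Y$ (a Noetherian scheme can be infinite-dimensional, and your own phrasing already gestures at the right version), and the "general hyperplane" step should be done on the generic fibre via Noether normalization over $k(\eta)$ so that it survives finite residue fields. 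None of these is a real gap; the paper's single-closed-point trick simply packages the whole step more economically.
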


\begin{proof} By Noetherian induction on $Y$, it is enough to find a map $\varphi :Z \rightarrow X$ such that the map $\pi \circ \varphi$ is quasi-finite and its image contains a non-empty open set. We can assume that $Y$ is affine and irreducible. Since the claim depends only on the underlying topological space, we can assume that $Y$ is reduced. Let $\eta$ be the generic point of $Y$ and let
%$\Rami{\eta}
 %\Spec k(\eta)
% \rightarrow Y$
% be the inclusion, and
{$X_\eta = \pi^{-1}(\eta)$.}
%$X_\eta = X \times_Y \Rami{\eta}$.
 By the {assumption that $\pi$ is surjective}, $X_\eta$ is non-empty. Since $\pi$ is locally of finite type, there are affine open sets $\Spec A \subset Y$, {and} $\Spec B \subset X \times_Y \Spec A$ such that $B$ is a finitely generated $A$-algebra, $\eta \in \Spec A$, and $\Spec(B \otimes_A k(\eta))\neq \emptyset$.
 %\Rami{$\Spec(B) \times_{\Spec{A} \eta \neq \emptyset$}.

It follows that there is a finite extension $L$ of $k(\eta)$ and a non-trivial map ${\nu}:B \otimes_A k(\eta) \rightarrow L$. Fix generators $b_1,\dots,b_n$ of $B$ over $A$. Let $a\in A$ be the product of the denominators of the coefficients of the minimal polynomials of $\nu(b_i\otimes 1)$ over $k(\eta)=\Frac(A)$. We obtain that $\nu(B{\otimes 1})[a ^{-1}]$
%\RamiQ{$\phi(B)$ makes the imprasion that $B\subset B \otimes_A k(\eta)$. This is not nesserly true. Maybe to give a name for the map $B \to B \otimes_A k(\eta)$ and use it.}
is an integral extension of $A[a ^{-1}]$. Taking $Z=\Spec \nu(B{\otimes 1})[a ^{-1}]$, we get a map $\varphi:Z \rightarrow X$ that $\pi \circ \varphi :Z \rightarrow \Spec A[a ^{-1}]$ is finite (and hence quasi-finite) and surjective. \end{proof}

%In order to prove Theorem \ref{lem:key}, we first treat several special cases:
{The proof of  Theorem \ref{lem:key} is based on subsequent analysis of its  special cases:}
\begin{lemma} \label{lem:key.scheme} Theorem \ref{lem:key} holds if $\frak{X}$ is a scheme.
\end{lemma}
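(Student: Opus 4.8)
The plan is to reduce to the quasi-finite case (Case 1 in the sketch) using Lemma \ref{lem:q.f.section}, and then handle the quasi-finite case by a degree bound. First, suppose $\pi: X \to \frak{X}$ is a surjective morphism of finite type with $\frak{X}$ a scheme. By Lemma \ref{lem:q.f.section}, there is a morphism $\varphi: Z \to X$ such that $\pi \circ \varphi: Z \to \frak{X}$ is surjective and quasi-finite. Since any $F$-point that factors through $\pi \circ \varphi$ a fortiori factors through $\pi$, it suffices to prove the statement for $\pi \circ \varphi$; so we may assume from the start that $\pi$ itself is quasi-finite (and surjective, of finite type, between Noetherian schemes). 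This reduction works uniformly for all fields, so it covers both part (1) and part (2) simultaneously — and in fact for a scheme $\frak{X}$ there is no QCA hypothesis to worry about, so parts (1) and (2) are proved by the same argument, giving $(n, \mathcal{F})$-almost ontoness.

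Next I would establish the quasi-finite case. The key point is that a quasi-finite morphism has fibers of bounded size: since $\pi$ is quasi-finite and of finite type over a Noetherian scheme, the function $y \mapsto \#\pi^{-1}(y)$ (counting points of the scheme-theoretic fiber, or just the number of points in the topological fiber) is bounded, say by $n$; one can see this by Noetherian induction, or by using that the set where the fiber has $\geq k$ points is constructible and these loci must stabilize. Now take $F \in \mathcal{F}_f$ (or any field, in view of the previous paragraph) and an $F$-point $u: \Spec F \to \frak{X}$. The fiber $X_u := X \times_{\frak{X}, u} \Spec F$ is a nonempty (by surjectivity of $\pi$) scheme of finite type over $F$ with at most $n$ points. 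Pick a closed point $x \in X_u$; its residue field $E := k(x)$ is a finite extension of $F$. The composition $\Spec E \to X_u \to X$ then gives the desired factorization of $\Spec E \to \Spec F \to \frak{X}$ through $\pi$. It remains to bound $[E:F]$ and to arrange that $E/F$ is separable.

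The bound on $[E:F]$: since $X_u$ is a finite-type $F$-scheme with at most $n$ points, its dimension is $0$, so $X_u$ is finite over $F$, and $\dim_F \Gamma(X_u, \mathcal{O}_{X_u}) \leq C$ for a constant $C$ depending only on $\pi$ (this is a quasi-finiteness/properness-of-fibers estimate — one can bound the length of fibers of a quasi-finite finite-type morphism over a Noetherian base; alternatively, embed $X_u$ into $\mathbb{A}^m_F$ cut out by polynomials of bounded degree coming from a fixed presentation of $X$ over an affine chart of $\frak{X}$, and use a Bézout-type bound). In particular $[E:F] \leq [k(x):F] \leq \dim_F \Gamma(X_u,\mathcal{O}_{X_u}) \leq C$. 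The separability is the one genuinely delicate point, and the place where the hypothesis $F \in \mathcal{F}_f$ (or more generally a perfect field) is used: over a perfect field every finite extension is separable, so $E/F$ is automatically separable; for finite fields this is immediate. I expect this separability issue — and making sure the degree bound $C$ is genuinely uniform over all fields $F$ and all $F$-points $u$, rather than depending on $F$ — to be the main obstacle; everything else is a routine application of Lemma \ref{lem:q.f.section} together with standard finiteness facts about quasi-finite morphisms of finite type over Noetherian schemes.
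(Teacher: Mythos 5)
Your argument is essentially the paper's proof: reduce to a surjective quasi-finite morphism via Lemma \ref{lem:q.f.section} and then use the uniform bound on the residue-field degrees $[k(z):k(\pi(z))]$ along a quasi-finite morphism of Noetherian schemes to extract, from the nonempty fiber over a given $F$-point, a closed point whose residue field is the required extension $E$. The separability issue you single out is indeed the only delicate step (the paper dismisses it with ``it is easy to see''), and your resolution --- that over a perfect field, in particular a finite field, every finite extension is automatically separable --- is the right one; for imperfect $F$ the literal claim of $(m,\mathcal{F})$-almost ontoness can in fact fail (e.g.\ for $\Spec k(t^{1/p})\to \Spec k$), so the statement should be read over $\mathcal{F}_{\mathrm{perf}}$, which is all that the paper ultimately uses.
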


\begin{proof} By Lemma \ref{lem:q.f.section}, there is a morphism $\varphi :Z \rightarrow X$ such that $\zeta:=\pi \circ \varphi$ is surjective and quasi-finite. {It is well known that, for quasi-finite maps, there exists} $m$ such that {$[k(z):k(\zeta(z))]<m$}, for every schematic point $z\in Z$.
%\RamiQ{why such m exist?}.
It is easy to see that $\pi$ is $(m,\mathcal{F})$-almost onto.
\end{proof}

\begin{corollary} \label{cor:one.for.all} Let $\mathfrak{X}$ be a {(Noetherian)} stack and let $\mathcal{S} \subset \mathcal{F}$. Suppose that $s:Y \rightarrow \mathfrak{X}$ is a surjective $(n,\mathcal{S})$-almost onto map, and let $\pi :X \rightarrow \mathfrak{X}$ be surjective {map}. Then there is $N$ such that $\pi$ is $(N,\mathcal{S})$-almost onto.
\end{corollary}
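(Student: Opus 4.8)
The plan is to compare $\pi$ with the given almost-onto presentation $s$ through the fibre product
\[
Z \;:=\; X \times_{\fX} Y ,
\]
and then reduce to the already-established scheme case, Lemma~\ref{lem:key.scheme}. Take $X$ and $Y$ to be schemes, which is the situation of interest. Since $\fX$ is algebraic its diagonal is representable, so $Z$ is an algebraic space of finite type over $S$; the projection $q\colon Z\to Y$ is the base change of the surjection $\pi$, hence surjective and of finite type, while the other projection $p\colon Z\to X$ satisfies $\pi\circ p\simeq s\circ q$ by the universal property of $Z$.

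First I would bound the failure of $q$ to be onto. Choose a scheme $W$ together with a surjective finite-type morphism $W\to Z$ (for instance an \'etale atlas); such a $W$ exists because $Z$ is an algebraic space of finite type over a Noetherian base. Then $W\to Y$ is a surjective finite-type morphism of Noetherian schemes, so Lemma~\ref{lem:key.scheme} yields an integer $m$ for which $W\to Y$---and hence also $q\colon Z\to Y$---is $(m,\mathcal{F})$-almost onto.

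Next comes the diagram chase, with $N:=nm$. Let $\Spec F\in\mathcal{S}$ and let $u\colon\Spec F\to\fX$. Applying the $(n,\mathcal{S})$-almost ontoness of $s$, we get a separable extension $E/F$ with $[E:F]\le n$ and a lift $v\colon\Spec E\to Y$ of the composite $\Spec E\to\Spec F\xrightarrow{\,u\,}\fX$. Since $\Spec E\in\mathcal{F}$, the $(m,\mathcal{F})$-almost ontoness of $q$ provides a separable extension $E'/E$ with $[E':E]\le m$ and a lift $w\colon\Spec E'\to Z$ of $\Spec E'\to\Spec E\xrightarrow{\,v\,}Y$. Then $p\circ w\colon\Spec E'\to X$ satisfies
\[
\pi\circ p\circ w \;\simeq\; s\circ q\circ w \;\simeq\; s\circ\bigl(v|_{\Spec E'}\bigr) \;\simeq\; u|_{\Spec E'},
\]
so the $E'$-point $u|_{\Spec E'}$ factors through $\pi$. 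As $E'/F$ is separable of degree at most $nm=N$, this exhibits $\pi$ as $(N,\mathcal{S})$-almost onto, as desired.

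I do not expect a genuine obstacle here: the only non-formal input is Lemma~\ref{lem:key.scheme}, which is available, and the rest is bookkeeping with fibre products and $2$-commutative triangles. The two points that deserve a line of care are that the tower $E'/E/F$ remains separable with degrees multiplying correctly, and that $Z$---being merely an algebraic space---still carries a scheme atlas so that the scheme case applies; both are standard.
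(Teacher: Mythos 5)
Your proposal is correct and follows essentially the same route as the paper: form the fibre product $X\times_{\fX}Y$, note that the projection to $Y$ is a surjection of finite type, pass to a scheme atlas and invoke Lemma~\ref{lem:key.scheme} to make that projection $(m,\mathcal{F})$-almost onto, and then compose the two lifts to get the bound $N=nm$. The only cosmetic difference is notation ($q,p,W$ for the paper's $\pi^{*},s^{*},Z$).
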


\begin{proof} Consider the diagram
\[
\xymatrix{X \times_\frak{X} Y \ar@{->}[r]^{\pi^*} \ar@{->}[d]^{s*} & Y \ar@{->}[d]^{s} \\ X \ar@{->}[r]^{\pi} & \frak{X}}
\]
%\Rami{By definition any algebraic stak is quasi-separated. Hence by \cite[Corollary 3.13]{LMB} the stack $X \times_\frak{X} Y$  is  a scheme. \RamiQ{in the introduction recall the def of alg steck 4.1[LMB]}}

{Since $\pi$ is surjective, b}y definition, the map $\pi ^*$ is surjective. {Let $p:Z\to X \times_\frak{X} Y$ be an etale cover of the algebraic space $X \times_\frak{X} Y$ by a scheme $Z$. We get that $\pi^* \circ p$ is surjective} so, by Lemma \ref{lem:key.scheme}, {$\pi^* \circ p$} is $(m,\mathcal{F})$-almost onto, for some $m$ {and, therefore, so is $\pi^*$}. It follows that the composition $s\circ \pi ^* = \pi \circ s^*$ is $(nm,\mathcal{S})$-almost onto, which implies that $\pi$ is $(nm,\mathcal{S})$-almost onto.
\end{proof}

\begin{lemma} \label{lem:etale.almost.onto} Theorem \ref{lem:key} holds if $\mathfrak{X}$ is an algebraic space and $\pi :X \rightarrow \mathfrak{X}$ is etale.
\end{lemma}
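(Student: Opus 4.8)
The plan is to reduce the étale case to the scheme case (Lemma~\ref{lem:key.scheme}) by exhibiting $\mathfrak{X}$ as the quotient of the scheme $X$ by an étale equivalence relation and bounding the size of the equivalence classes over the relevant fields. First I would recall that, since $\pi:X\to\mathfrak{X}$ is an étale presentation of the algebraic space $\mathfrak{X}$, the fiber product $R:=X\times_{\mathfrak{X}}X$ is a scheme (an algebraic space with étale maps to a scheme is a scheme, or one can simply pass to an étale cover of $R$ by a scheme, which is harmless by Corollary~\ref{cor:one.for.all}), and the two projections $s,t:R\to X$ give an étale equivalence relation on $X$ with $\mathfrak{X}=X/R$. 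Because $R\to X$ is étale and of finite type, it is quasi-finite, so there is an integer $m$ with $[k(r):k(s(r))]<m$ for every schematic point $r\in R$; equivalently, for every field $F$ and every $x\in X(F)$ the fiber $s^{-1}(x)$, which parametrizes the points of $X$ that become isomorphic to $x$ in $\mathfrak{X}$, has at most $m$ points after base change to $\bar F$, and in particular is the spectrum of an étale $F$-algebra of degree $<m$.

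Next I would take an arbitrary $F$-point $u:\Spec F\to\mathfrak{X}$ with $\Spec F\in\mathcal{F}_f$ (resp.\ $\mathcal{F}$, noting that an algebraic space is automatically QCA-irrelevant here since there is no issue of automorphism groups). Since $\pi$ is surjective and $X$ is a scheme of finite type, the fiber $X_u:=\Spec F\times_{\mathfrak{X}}X$ is a nonempty scheme, étale and of finite type over $\Spec F$; hence it is a finite disjoint union of spectra of finite separable extensions of $F$. Moreover the bound from the previous paragraph shows that each such extension has degree $<m$: indeed $X_u$, base-changed along any $\bar F$-point of $X_u$, embeds into a fiber of $s:R\to X$, which has $<m$ geometric points. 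Therefore $X_u$ contains $\Spec E$ for some separable extension $E/F$ with $[E:F]<m$, and by construction the composition $\Spec E\to\Spec F\xrightarrow{u}\mathfrak{X}$ factors through $\pi$. This proves $\pi$ is $(m,\mathcal{F})$-almost onto, which gives both parts of Theorem~\ref{lem:key} at once in this case (with $n=m$), since $\mathcal{F}_f\subset\mathcal{F}$.

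The only genuinely delicate point is the passage from ``$R$ is an algebraic space'' to having an honest quasi-finite scheme to which Lemma~\ref{lem:key.scheme} applies; but this is precisely what Corollary~\ref{cor:one.for.all} is designed for: it suffices to produce one surjective $(n,\mathcal{F})$-almost onto map to $\mathfrak{X}$, so I may freely replace $R$ by an étale cover $Z\to R$ by a scheme and work with $s\circ(Z\to R):Z\to X$, which is still quasi-finite, and then the composite $X\to\mathfrak{X}$ is surjective and the argument above bounds the fibers of $u$ by reducing to quasi-finiteness of $Z\to X$. Since the étale-local structure of $\mathfrak{X}$ and the finiteness of the degree bound are the whole content, I expect no serious obstacle beyond bookkeeping; the main conceptual step is simply recognizing that the degree bound for the quasi-finite map $R\to X$ (or $Z\to X$) is exactly the bound on the separable degree needed in the definition of $(n,\mathcal{S})$-almost onto.
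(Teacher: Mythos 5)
Your argument is correct and rests on the same core idea as the paper's proof: bound the number of geometric points in the fibers of $s:X\times_{\mathfrak{X}}X\to X$ by some $m$, and observe that this bounds the degree of a separable extension over which a given $F$-point of $\mathfrak{X}$ lifts. The packaging differs in a pleasant way: the paper works with $F^{\mathrm{sep}}$-points, identifying $\mathfrak{X}(F^{\mathrm{sep}})$ with $X(F^{\mathrm{sep}})/R(F^{\mathrm{sep}})$ and $\mathfrak{X}(F)$ with the Galois invariants, then descends a lift $y\in X(F^{\mathrm{sep}})$ along the index-$\leq m$ stabilizer of $y$ inside its equivalence class; you instead look directly at the fiber $X_u=\Spec F\times_{\mathfrak{X}}X$, note that it is a nonempty quasi-separated algebraic space \'etale and of finite type over $\Spec F$ (hence a finite disjoint union of spectra of finite separable extensions), and identify its geometric fiber with a fiber of $s$. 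This is the same descent argument run once and for all at the level of the \'etale $F$-scheme $X_u$ rather than by hand on Galois orbits, and it is arguably cleaner. Two small imprecisions to fix: the parenthetical claim that an algebraic space with an \'etale map to a scheme is a scheme is false in general (what you actually need, and what is true, is that a quasi-separated algebraic space \'etale over a field is a scheme, applied to $X_u$; the scheme-ness of $R$ itself is never used, and the appeal to Corollary~\ref{cor:one.for.all} in your last paragraph is not needed); and the bound you want on $s$ is on the number of geometric points in each fiber, not on the residue degrees $[k(r):k(s(r))]$ of individual schematic points --- these are not equivalent, though the former is exactly the standard boundedness statement for quasi-finite maps out of Noetherian sources that the paper also invokes.
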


\begin{proof} \cite[Proposition 1.2]{LMB}
%\cite[Lemma 59.9.1]{SP} \url{https://stacks.math.columbia.edu/tag/0261}
states that the following are true:\begin{enumerate}
\item\label{it:eq1} The two projections $p_{1,2}:X \times_\mathfrak{X} X \rightarrow X$ are etale and the subscheme $X \times_{\mathfrak{X}} X \subset X \times X$ is an equivalence relation.
\item\label{it:eq2} $\mathfrak{X}$ is the coequalizer of
 $X \times_\mathfrak{X} X {\underset{p_2}{\overset{p_1}{\rightrightarrows}}} X$
 %$X \times_\mathfrak{X} X \overset{\rightarrow}{\rightarrow} X$ \nir{correct symbol}
  (as sheaves on the big etale site of $S$). In particular, for any field $F$, $\mathfrak{X}(F^{sep})=X(F^{sep})/X \times_\mathfrak{X} X(F^{sep})$.
\end{enumerate}

Note also that, for any Galois extension $F \subset L$, there is an injective map $i_{F,L}:\mathfrak{X}(F) \rightarrow \mathfrak{X}(L)$ and the image is the collection of $\Gal(L/F)$-invariants.

Statement {\eqref{it:eq1}} and the assumption that $X$ is Noetherian imply that there is $N$ such that, for any field $F$, the sizes of the equivalence classes of $X \times_\mathfrak{X} X(F^{sep})$ are at most $N$. We will show that $\pi$ is $(N,\mathcal{F})$-almost onto.

Suppose that $F$ is a field and $x\in \mathfrak{X}(F)$. By {\eqref{it:eq2}}, there is $y\in X(F^{sep})$ such that $i_{F,F^{sep}}(x)$ is the equivalence class $[y]$ of $y$. Since $\Gal_F$ preserves $[y]$, it follows that there is a closed subgroup $H \subset \Gal_F$ of index at most $N$ such that $H$ fixes $y$. Let $L=(F^{sep})^H$. Then $[L:F] \leq N$ and $y\in X(L)$. Finally, since $i_{L,F^{sep}}(\pi(y))=[y]=i_{F,F^{sep}}(x)=i_{L,F^{sep}}(i_{F,L}(x))$, it follows that $\pi(y)=i_{F,L}(x)$.
\end{proof}

Lemma \ref{lem:etale.almost.onto} and Corollary \ref{cor:one.for.all} give the following:

\begin{corollary} \label{cor:alg.space} Theorem \ref{lem:key} holds if $\frak{X}$ is an algebraic space.
\end{corollary}

\begin{lemma} \label{lem:Galois} Let $F$ be a field, let $i:C \rightarrow X$, $f:X \rightarrow Y$ be morphisms of $F$-schemes, and denote the structure map of $C$ by $\kappa: C \rightarrow \Spec F$. Suppose that $f$ is surjective and that, for some finite extension $F \subset L$, there is $\eta :\Spec L \rightarrow {Y}$ making the diagram
%\[
%\xymatrix{C_L \ar@{->}[r]^{i_L} \ar@{->}[d]^{\kappa_L} & X_L \ar@{->}[d]^{f_L} \\ \Spec L \ar@{->}[r]^{\eta} & Y_L}
%\]
{
$$\xymatrix{C_L \ar@{->}[r]^{i_L} \ar@{->}[d]^{\kappa_L} & X \ar@{->}[d]^{f} \\ \Spec L \ar@{->}[r]^{\eta} & Y}$$
}
a pullback diagram (here, $i_L$ and $\kappa_L$ are the base-changes of $i$ and $\kappa$ respectively). Then there is $\zeta :\Spec F \rightarrow Y$ such that $\zeta_L=\eta$ and such that the diagram
\[
\xymatrix{C \ar@{->}[r]^{i} \ar@{->}[d]^{\kappa} & X \ar@{->}[d]^{f} \\ \Spec F \ar@{->}[r]^{\zeta} & Y}
\]
is a pullback diagram.

{In other words, if the fiber under a surjective morphism $f:X \to Y$  of an $L$ point $\eta$ is defined over $F$ then it is in fact a fiber of an $F$-point.}
\end{lemma}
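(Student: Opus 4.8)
The plan is to deduce the statement from faithfully flat descent along the structure morphism $\kappa\colon C\to\Spec F$, viewing $Y$ as an fpqc sheaf. The first thing to record is that $\kappa$ is genuinely a cover: base-changing the hypothesized pullback square along $\Spec L\to\Spec F$ identifies $C_L$ with $X\times_{Y,\eta}\Spec L$, which is the fibre of the surjection $f$ over the (unique) point of $Y$ underlying $\eta$, hence nonempty; therefore $C\neq\emptyset$, and as $C$ is of finite type over the field $F$ it is quasi-compact and $F$-flat, so $\kappa$ is faithfully flat and quasi-compact. The same remarks apply to $\kappa_L\colon C_L\to\Spec L$, and $\Spec L\to\Spec F$ is finite and faithfully flat; all three are fpqc covers, and a scheme --- in particular $Y$ --- satisfies descent along each of them.

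Set $g:=f\circ i\colon C\to Y$. Commutativity of the given square reads $g_L=f\circ i_L=\eta\circ\kappa_L$, so $g_L$ factors through $\kappa_L$. I want to promote this to the assertion that $g$ itself factors through $\kappa$. Since $\kappa$ is an fpqc cover and $Y$ is a sheaf, $\kappa$ is an effective epimorphism, so $g$ factors through $\kappa$ iff $g\circ p_1=g\circ p_2$ for the two projections $p_1,p_2\colon C\times_F C\to C$. This equality of morphisms into the sheaf $Y$ may be checked after composing with the fpqc cover $(C\times_F C)_L\to C\times_F C$; and there $\kappa_L\circ (p_i)_L\colon C_L\times_L C_L\to\Spec L$ is simply the structure morphism, independent of $i$, so $(g\circ p_i)_L=\eta\circ\kappa_L\circ(p_i)_L$ is the same for $i=1,2$. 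Hence $g\circ p_1=g\circ p_2$, and descent produces a (unique) $\zeta\colon\Spec F\to Y$ with $\zeta\circ\kappa=g$.

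Two verifications remain. First, $\zeta_L=\eta$: from $\zeta_L\circ\kappa_L=g_L=\eta\circ\kappa_L$ and the fact that $\kappa_L$ is an epimorphism of sheaves, $\zeta_L=\eta$. Second, the pullback property: since $f\circ i=\zeta\circ\kappa$, the pair $(i,\kappa)$ induces a canonical $F$-morphism $\theta\colon C\to X\times_{Y,\zeta}\Spec F$, and the square in the conclusion is cartesian exactly when $\theta$ is an isomorphism. Base-changing $\theta$ along $\Spec L\to\Spec F$ and using $\zeta_L=\eta$ to identify $(X\times_{Y,\zeta}\Spec F)\times_{\Spec F}\Spec L\cong X\times_{Y,\eta}\Spec L$, the morphism $\theta_L$ becomes the comparison map $C_L\to X\times_{Y,\eta}\Spec L$ induced by $(i_L,\kappa_L)$ --- an isomorphism by hypothesis, this being exactly the content of the assumed pullback square. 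As ``being an isomorphism'' is fpqc-local on the base and $\Spec L\to\Spec F$ is an fpqc cover, $\theta$ is an isomorphism, finishing the proof.

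I do not expect a serious obstacle: the whole argument is soft descent bookkeeping. The one point that genuinely needs care is the opening move --- it is the surjectivity of $f$ that guarantees $C\neq\emptyset$ and hence that $\kappa$ (and $\kappa_L$) are honest covers; without it the descent has no content and $\zeta$ need not exist. (Alternatively one could argue by classical Galois descent after enlarging $L$ to its Galois closure over $F$, checking that the image of $\eta$ over the closure is invariant under the Galois group; the fpqc formulation above simply avoids that enlargement.)
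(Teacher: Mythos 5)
Your proof is correct. It rests on the same underlying tool as the paper's argument --- faithfully flat descent, with the surjectivity of $f$ used exactly where it must be, namely to guarantee that the relevant structure map is a cover --- but you organize the descent differently. The paper descends the $L$-point $\eta$ along the cover $\Spec L\to\Spec F$: it forms $R=L\otimes_F L$ with its two projections $a,b:\Spec R\to\Spec L$, identifies the fibres of $\eta\circ a$ and $\eta\circ b$ with $C_R$, uses the factorization of $i_L$ through $i$ to see that the two induced maps $C_R\to X$ agree, and then invokes the surjectivity of $\kappa_R:C_R\to\Spec R$ (coming from surjectivity of $f$) to cancel $\kappa_R$ and conclude $\eta\circ a=\eta\circ b$. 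You instead descend the morphism $g=f\circ i\in Y(C)$ along the cover $\kappa:C\to\Spec F$, and verify the cocycle condition $g\circ p_1=g\circ p_2$ on $C\times_F C$ by passing to $L$, where it trivializes because $g_L$ factors through $\Spec L$. The two arguments are dual in flavour: in each, one of the two covers $\Spec L\to\Spec F$ and $C\to\Spec F$ carries the descent and the other carries the verification. Your version buys a small simplification --- you never need to identify the two fibre products $C^a_R$ and $C^b_R$ over $\Spec(L\otimes_F L)$, which is the fussiest diagram chase in the paper's proof --- at the cost of having to justify up front that $\kappa$ is an fpqc cover (nonempty via surjectivity of $f$, quasi-compact via finite type, flat over a field), which you do. The concluding step, producing the comparison map $\theta:C\to X\times_{Y,\zeta}\Spec F$ and checking it is an isomorphism fpqc-locally after base change to $L$, is identical in both proofs.
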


\begin{proof} Let $\alpha: \Spec L \rightarrow \Spec F$ be the map corresponding to the inclusion $F \subset L$. Taking the base change of $\alpha$ and $\kappa$, we get $\alpha_C:C_L \rightarrow C$. Define $\alpha_X:X_L \rightarrow X$ and $\alpha_Y:Y_L \rightarrow Y$ similarly.
%\begin{enumerate}
%\item \label{IA} $\alpha_Y$ is a monomorphism. \RamiQ{Why? is it spesific to $Y$?}
%\item \label{IB} The diagrams
%\[
%\xymatrix{C_L \ar@{->}[r]^{i_L} \ar@{->}[d]^{\alpha_C} & X_L \ar@{->}[d]^{\alpha_X} \\ C \ar@{->}[r]^{i} & X}
%\]
%and
%\[
%\xymatrix{X_L \ar@{->}[r]^{f_L} \ar@{->}[d]^{\alpha_X} & Y \ar@{->}[d]^{\alpha_Y} \\ X \ar@{->}[r]^{f} & Y}
%\]
%commute.
%\end{enumerate}

Let $R=L \otimes_F L$. We have two maps $a,b:\Spec R \rightarrow \Spec L$ such that $\alpha \circ a=\alpha \circ b$. Let $\kappa_R:C_R \rightarrow \Spec R$ be the base change of $\kappa$.{ Consider the diagram
%$$\spec(R) \underset{a}{\overset{b}{\rightrightarrows}} \overset{\eta}{\spec(L)} \to Y.$$
$$\xymatrix{
		\spec(R)  \ar@/_2.0pc/@{->}[rr]_{\eta \circ a}\ar@/_3.5pc/@{->}[rr]_{\eta \circ b} \ar@/^/@{->}[r]^{a}\ar@/_/@{->}[r]_{b} & \spec(L)\ar@{->}[r]^{\eta} &Y}$$
We will show that $\eta \circ a=\eta \circ b$.
 By faithfully flat descent this would imply that $\eta$ factor through an $F$-point $\zeta:\spec(F)\to Y$. This will give  a morphism $C\to f^{-1}(\zeta)$ that becomes an isomorphism after extending scalars to $L$. Since $\spec L \to \spec F$ is faithfully flat this implies $C= f^{-1}(\zeta)$ as required.

Consider the following Cartesian squares
$$\xymatrix{C^a_R \ar@{->}[r]^{a_C} \ar@{->}[d]^{\kappa_R}& C_L\ar@{->}[r]^{i_L} \ar@{->}[d]^{\kappa_L}&X \ar@{->}[d]^{f}\\
		\spec(R)\ar@{->}[r]^{a} & \spec(L)\ar@{->}[r]^{\eta} &Y}.$$

We obviously have $C^a_R \cong C_R:= C\times_{\spec(F)} \spec(R)$. Applying the same argument to $b$ we get:

%We pull beck the above diagram with the map $f:X\to Y$ and obtain:
$$\xymatrix{C_R\ar@/^2.0pc/@{->}[rr]^{i_L \circ b_C}\ar@/^3.5pc/@{->}[rr]^{i_L \circ a_C} \ar@/^/@{->}[r]^{a_C}\ar@/_/@{->}[r]_{b_C} \ar@{->}[d]^{\kappa_R}& C_L\ar@{->}[r]^{i_L} \ar@{->}[d]^{\kappa_L}&X \ar@{->}[d]^{f}\\
		\spec(R)  \ar@/_2.0pc/@{->}[rr]_{\eta \circ a}\ar@/_3.5pc/@{->}[rr]_{\eta \circ b} \ar@/^/@{->}[r]^{a}\ar@/_/@{->}[r]_{b} & \spec(L)\ar@{->}[r]^{\eta} &Y}.$$

Since $i_L$ factor through $i:C\to X$, we obtain that the upper two arrows coinside (namely $i_L \circ a_C=i_L \circ b_C$). The surjectivity of $f$ implies that {$\kappa_R$} is surjective. Thus we obtain that the lower two arrows coincide (namely $\eta \circ a=\eta  \circ b$), as required. }
 %Then
 %\begin{enumerate}
 %\setcounter{enumi}{2}
 %\item \label{IIA} The diagrams
 %\[
 %\xymatrix{C_R \ar@{->}[r]^{a_C} \ar@{->}[d]^{\kappa_R} & C_L \ar@{->}[d]^{\kappa_L} \\ \Spec R \ar@{->}[r]^{a} & \Spec L}
 %\]
 %and
 %\[
 %\xymatrix{C_R \ar@{->}[r]^{b_C} \ar@{->}[d]^{\kappa_R} & C_L \ar@{->}[d]^{\kappa_L} \\ \Spec R \ar@{->}[r]^{b} & \Spec L}
 %\]
 %are pullback diagrams.
 %\item \label{IIB} Since $f$ is onto, $C_L$ is non-empty. Hence $\kappa_L:C_L \rightarrow \Spec L$ is faithfully flat. Since $\kappa_R$ is a base change of $\kappa_L$, we get that $\kappa_R$ is faithfully flat and, in particular, an epimorphism.
 %\end{enumerate}

 %We have
 %\[
 %\alpha_Y \circ \eta \circ a \circ \kappa_R = \alpha_Y \circ \eta \circ \sigma_L \circ a_C = \alpha_Y \circ f_L \circ i_L \circ a_C = f \circ \alpha_X \circ i_L \circ a_C = f \circ i \circ \alpha_C \circ a_C.
 %\]
 %(the first equality is by \eqref{IIA}, the second by assumption \eqref{}, the third and fourth by \eqref{IB})
 %Since $\alpha \circ a=\alpha \circ b$, we have $\alpha_C \circ a_C=\alpha_C \circ b_C$. Following the same steps backwards, we get that
 %\[
 %\alpha_Y \circ \eta \circ a \circ \kappa_R=\alpha_Y \circ \eta \circ b \circ \kappa_R.
 %\]
 %Since $\alpha_Y$ is a monomorphism and $\kappa_R$ is an epimorphism, we get that $\eta \circ a = \eta \circ b$. By faithfully flat descent, we get that $\eta$ is the base change of a map $\zeta :\Spec F \rightarrow Y$. It is easy to see that $C$ is the fiber of $f$ corresponding to $\zeta$.
 %\RamiQ{Could not read the proof. Lets discus it}

 \end{proof}

\begin{lemma} \label{lem:key.BG.special}
%Theorem \ref{lem:key} holds if \Rami{for classifying space of flat group algebraic space. In other word if
{Let $G$ be a flat group algebraic space over a scheme $X$, let $\frak{X}=[X/G]$ be the  classifying space  of $G$ (see \cite[89.13]{SP}), and let $\pi :X \rightarrow \frak{X}$ be the  neutralizing map.  Then Theorem \ref{lem:key} holds for $\pi :X \rightarrow \frak{X}$.}
%?? we need to work with the terminology of SP [X/G].% . we need to use
\end{lemma}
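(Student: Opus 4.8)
The plan is to reformulate the statement in terms of torsors under the fibres of $G$, reduce to the generic point of $X$ by Noetherian induction, and then handle the two cases separately: Lang's theorem for finite fields, and an embedding into $\GL_m$ for the QCA case. First I would record the translation. An $F$-point of $\fX=[X/G]$ is a pair $(x,P)$ with $x\in X(F)$ and $P$ a torsor over $\Spec F$ under the fibre $G_x:=x^*G$ (which is a group \emph{scheme} of finite type over $F$, since group algebraic spaces of finite type over a field are schemes), and such a point is $\pi$-liftable exactly when $P$ is trivial, i.e.\ $P(F)\neq\varnothing$. So the goal is to produce $n$ such that for every field $F$ in the relevant class, every $x\in X(F)$ and every $G_x$-torsor $P$, there is a separable extension $E/F$ with $[E:F]\le n$ and $P(E)\neq\varnothing$. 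Using Lemma \ref{lem:strat} I may replace $X$ by $X_{\mathrm{red}}$ and stratify it into integral locally closed subschemes; a torsor pulls back along each stratum, so it suffices to bound, separately on each stratum, the points landing in it, and to take the maximum at the end. Thus assume $X$ integral with generic point $\eta$; it is then enough to find a dense open $U\subseteq X$ and a bound valid for all $x$ factoring through $U$, because Noetherian induction on the closed complement $X\smallsetminus U$ finishes. I will shrink $U$ freely.

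For finite fields, I would take $n$ to be the number $c$ of geometric connected components of $G_\eta$, which is also the number of geometric components of the fibres of $G\to X$ once $U$ is shrunk so that this number is constant. Given a finite field $F$, a point $x\in U(F)$ and a $G_x$-torsor $P$: over the perfect field $F$ the identity component $(G_x)^0$ is an open subgroup scheme, $\pi_0(G_x)$ is finite \'etale of degree $c$, and $\pi_0(P):=P/(G_x)^0$ is a $\pi_0(G_x)$-torsor, hence a finite \'etale $F$-scheme of degree $c$; therefore it has a point over a separable extension $E/F$ with $[E:F]\le c$. The fibre of $P\to\pi_0(P)$ over that point is a torsor over $E$ under the connected group $(G_x)^0_E$, and such a torsor is trivial: $H^1(E,(G_x)^0_{\mathrm{red}})=0$ by Lang's theorem, since $(G_x)^0_{\mathrm{red}}$ is smooth and connected over the finite field $E$, while a torsor over the perfect finite field $E$ under the infinitesimal group $(G_x)^0/(G_x)^0_{\mathrm{red}}$ is the spectrum of a local Artinian $E$-algebra whose residue field must be $E$, hence has an $E$-point. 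So $P(E)\neq\varnothing$.

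For the QCA case, the automorphism group of an object of $\fX$ over a separably closed field is an inner form of a fibre of $G$, so these fibres are linear (in particular affine) algebraic groups; hence $G_\eta$ embeds as a closed subgroup of some $\GL_{m,k(\eta)}$, and spreading out I may shrink $U$ so that there is a closed embedding $G|_U\hookrightarrow\mathcal G:=\GL_{m,U}$ of $U$-group schemes. Let $\mathcal Z:=\mathcal G/G|_U$, an algebraic space of finite type over $U$; the natural map $\mathcal Z\to[U/G|_U]$ is fppf-surjective because, by Hilbert 90, every torsor under a fibre of $G$ over a field has trivial pushforward to $\GL_m$ and so reduces to a point of $\mathcal Z$. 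Choose a scheme atlas $W\to\mathcal Z$; since $\mathcal G\to\mathcal Z$ is affine (a $G$-torsor with affine structure group), $\mathcal G\times_{\mathcal Z}W$ is a scheme, and $\mathcal G\times_{\mathcal Z}W\to\mathcal Z$ is a surjective finite type map from a scheme to an algebraic space, hence $(c,\cF)$-almost onto for some $c$ by Corollary \ref{cor:alg.space}. Now, given $x\in U(F)$ and a $G_x$-torsor $P$, Hilbert 90 over $F$ yields $z\in\mathcal Z(F)$ whose associated torsor is $P$; lifting $z$ over a separable extension $E/F$ with $[E:F]\le c$ to $\mathcal G\times_{\mathcal Z}W$ and projecting to $\mathcal G$ produces an $E$-point of the fibre of $\mathcal G\to\mathcal Z$ over $z|_E$, which is $P|_E$; the compatibility of this fibre identification with extension of the base field can be packaged using Lemma \ref{lem:Galois}. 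Hence $P(E)\neq\varnothing$, and combining this with the previous paragraph (taking the larger of the two bounds) gives the desired $n$.

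The step I expect to be the main obstacle is the positive-characteristic bookkeeping: one has to be sure that $\pi_0(G_x)$ is genuinely finite \'etale and that the Lang-type vanishing survives for the possibly non-smooth connected group $(G_x)^0$, and that after all the stratifying and spreading-out the finitely many stratum-wise bounds really do assemble into one integer $n$. A secondary technical point, already visible in the QCA step, is that $\GL_m/G$ is in general only an algebraic space, which is why one must pass to the atlas $W$ and invoke Corollary \ref{cor:alg.space} rather than the scheme case Lemma \ref{lem:key.scheme}.
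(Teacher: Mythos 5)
Your proof is correct in substance and follows the same overall strategy as the paper's: translate liftability into triviality of a $G_x$-torsor, reduce to a dense open stratum of $X$, and then split into a Lang/component-group argument over finite fields and a $\GL_m$-embedding plus Hilbert~90 argument in the QCA case. The execution differs in two places. Over finite fields, you locate a point of the degree-$c$ finite \'etale scheme $\pi_0(P)$ directly and get the bound $c$, whereas the paper bounds the degree by $(n!)^2$ by first applying Lang's theorem to reduce to $H^1(\mathbb{F}_q,\pi_0 H)$ and then killing cocycles in two stages; your version is cleaner and gives a sharper constant. In the QCA case you keep $\GL_m/G$ as an algebraic space and feed an atlas into Corollary~\ref{cor:alg.space}, while the paper stratifies further (Proposition~\ref{prop:strat.qca}) so that the quotient is a scheme and then applies Lemma~\ref{lem:key.scheme}; your route trades that extra stratification for a citation that the quotient is representable by an algebraic space, and otherwise matches the paper's use of Hilbert~90 and Lemma~\ref{lem:Galois}.

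One step needs repair. You pass to the quotient $(G_x)^0/(G_x)^0_{\mathrm{red}}$, which implicitly assumes that $(G_x)^0_{\mathrm{red}}$ is normal in $(G_x)^0$; this can fail even over an algebraically closed field of positive characteristic (e.g.\ $N=\alpha_p\rtimes\mathbb{G}_m$ has $N_{\mathrm{red}}=\mathbb{G}_m$ non-normal). The conclusion you need --- every torsor $Q$ under a connected finite type group scheme $N$ over a finite field $E$ is trivial --- survives without the quotient: since $E$ is perfect, $N_{\mathrm{red}}$ is a smooth connected subgroup scheme; the action map $N_{\mathrm{red}}\times Q_{\mathrm{red}}\to Q$ has reduced source, hence factors through $Q_{\mathrm{red}}$, and base change to $\bar E$ shows this makes $Q_{\mathrm{red}}$ an $N_{\mathrm{red}}$-torsor; Lang's theorem then gives $Q_{\mathrm{red}}(E)\neq\varnothing$, so $Q(E)\neq\varnothing$. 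With that substitution your argument is complete.
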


\begin{proof}
{By Proposition \ref{prop:group.algebraic.space} {and Lemma \ref{lem:strat}}
we can assume that $G$ is a group scheme.}
Let $F$ be a field. By \cite[\href{https://stacks.math.columbia.edu/tag/0CQJ}{Lemma 89.15.4}]{SP},
%\url{https://stacks.math.columbia.edu/tag/0CQJ}
% (see \cite[(9.6)]{LMB}),
a point $u:\Spec F \rightarrow [X/G]$ is a pair $(x,P)$, where $x\in X(F)$ and $P$ is a $G_x$-torsor in the fppf topology, i.e., a $G_x$-space that becomes trivial after base-change to the algebraic closure of $F$. The point $u$ factors through $X$ iff $P$ is a trivial $G_x$-torsor. Therefore, it is enough to show that there is a constant $N$ such that, for any finite field $F$ (in case $\mathfrak{X}$ is QCA $F$ can be taken to be arbitrary), any $x\in X(F)$ and any $G_x$-torsor $P$, there is a field extension $E \supset F$ of degree at most $N$ such that $P \times_{\Rami{\Spec F}} \Spec E$ is trivial. We show this holds for the two cases of the theorem.
\begin{enumerate}
\item \Rami{Finite field case:}\\ Let $n:=\max \#\pi_0(G_s)$ where $s$ ranges over all geometric points of $X$ (this maximum exists because $s \mapsto \# \pi_0(G_s)$ is constructible, see \cite[Proposition 9.7.8]{EGA4}). We will show that, for any finite field $F=\mathbb{F}_q$ and any algebraic group $H$ over $F$ with at most $n$ connected components, any $H$-torsor has a trivialization over $\mathbb{F}_{q^{(n!)^2}}$. Since finite fields are perfect, any fppf torsor is an etale one. Hence, we need to show that the map $H^1(\mathbb{F}_q,H) \rightarrow H^1(\mathbb{F}_{q^{(n!)^2}},H)$ is trivial. By Lang's theorem, it is enough to show that $H^1(\mathbb{F}_q,\pi_0H) \rightarrow H^1(\mathbb{F}_{q^{(n!)^2}},\pi_0H)$ is trivial. This map is the composition of $H^1(\mathbb{F}_q,\pi_0H) \rightarrow H^1(\mathbb{F}_{q^{n!}},\pi_0H)$ and $H^1(\mathbb{F}_{q^{n!}},\pi_0H) \rightarrow H^1(\mathbb{F}_{q^{(n!)^2}},\pi_0H)$, so it is enough to show that the second map is trivial. Note that the action of $\Gal(\overline{\mathbb{F}_q} / \mathbb{F}_{q^{n!}})$ on $\pi_0H$ is trivial, so any 1-cocycle is a homomorphism $\Gal(\overline{\mathbb{F}_q} / \mathbb{F}_{q^{n!}}) \rightarrow \pi_0H$, but any such becomes trivial when restricted to $\Gal(\overline{\mathbb{F}_q} / \mathbb{F}_{q^{(n!)^2}})$.

\item {QCA stack case:}\\
 Assume now that $\mathfrak{X}=[X/G]$ is QCA. By Proposition \ref{prop:strat.qca.1}, there is a stratification $X=\cup X_i$ such that $G|_{X_i^{red}}$ can be embedded as a closed subgroup in $\GL_n \times X_i^{red}$, for some $n$.
 %If the claim of the Lemma holds for each $B(G|_{X_i^{red}}/X_i^{red})$, then it holds for $\mathfrak{X}$.
 Hence, {by Lemma \ref{lem:strat},} we can assume that $X$ is reduced and $G$ is \Rami{a} closed subgroup of $\GL_n \times X$. Similarly, using Proposition \ref{prop:strat.qca}, we can assume that the quotient $Z:=\GL_n \times X / G$ exists. Since the quotient map $\pi:\GL_n \times X \rightarrow Z$ is onto, Lemma \ref{lem:key.scheme} implies that there is a natural number ${N}$ such that, for every field $F$ and every $p\in Z(F)$, there is an extension $E \supset F$ of degree at most ${N}$ such that {the composition $\Spec E \rightarrow \Spec F \overset{p}{\rightarrow} Z$} factors through $\GL_n \times X$. We will show that, for any field $F$, and $x\in X(F)$ and every $G_x$-torsor $P$ defined over $F$, there is a field extension $E \supset F$ of degree at most ${N}$ such that $P \times \Spec E$ is trivial.

Let $F$ be a field, let $x\in X(F)$, and let $P$ be a $G_x$-torsor defined over $F$. The quotient $P \times \GL_n / G_x$ (where $G_x$ acts diagonally on $P \times \GL_n$) is a $\GL_n$-torsor over $F$. By Hilbert 90, this torsor is trivial. This means that there is a $\GL_n$-equivariant isomorphism $P \times \GL_n / G_x \rightarrow \GL_n$. Composing this isomorphism with the map $P \rightarrow P \times \GL_n / G_x$ that sends $p$ to $(p,1)G_x$, we get a morphism $i:P \rightarrow \GL_n$ which is $G_x$-equivariant. Since $P$ is a torsor, for some finite extension $L \supset F$, the base change $P \times \Spec L$ is trivial, so {by Lemma \ref{lem:base.change.quotient} we have} $i(P \times \Spec L)=\pi ^{-1} (w)$, for some $w\in Z(L)$. Applying Lemma \ref{lem:Galois}, $i(P)=\pi ^{-1} (z)$, for some $z\in Z(F)$. By the definition of ${N}$, there is a field extension $E \supset F$ of degree at most ${N}$ and a point $g\in i(P)(E)$. It follows that $P$ is trivial over $E$, which is what we wanted to prove.
\end{enumerate}

\end{proof}
Lemma \ref{lem:key.BG.special} and Corollary \ref{cor:one.for.all} give the following:

\begin{corollary} \label{lem:key.BG} Theorem \ref{lem:key} holds if $\frak{X}=[X/G]$, where $X$ is a scheme and $G$ is a flat group scheme over $X$.
\end{corollary}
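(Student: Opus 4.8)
The plan is simply to combine Lemma \ref{lem:key.BG.special} with Corollary \ref{cor:one.for.all}, in exactly the way that Case \ref{ca:2} lets one pass from a single convenient presentation to an arbitrary surjection. So I do not expect any genuine obstacle here: all the real work has already been done in Lemma \ref{lem:key.BG.special}, and this corollary is the formal bookkeeping step that removes the dependence on the chosen presentation.

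Concretely, I would first note that a flat group \emph{scheme} over $X$ is in particular a flat group algebraic space over $X$, so Lemma \ref{lem:key.BG.special} applies without change. The neutralizing map $p\colon X\to \fX=[X/G]$ is an fppf cover (since $G\to X$ is flat, and it is of finite type by the standing hypothesis that everything in sight is of finite type over the Noetherian base $S$), hence in particular a surjective morphism of finite type between a scheme and a stack. Lemma \ref{lem:key.BG.special} therefore provides an integer $n$ such that $p$ is $(n,\mathcal{F}_f)$-almost onto, and moreover $(n,\mathcal{F})$-almost onto whenever $\fX$ is QCA. Note that $\fX=[X/G]$ is a Noetherian stack, so Corollary \ref{cor:one.for.all} is available.

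Next, I would take an arbitrary surjective morphism of finite type $\pi\colon W\to \fX$ from a scheme $W$, as in the hypothesis of Theorem \ref{lem:key}, and apply Corollary \ref{cor:one.for.all} with $s=p$ and with $\mathcal{S}=\mathcal{F}_f$ in the first case, $\mathcal{S}=\mathcal{F}$ in the QCA case. This yields an $N$ (of the shape $N=nm$, with $m$ coming from Lemma \ref{lem:key.scheme} applied to a scheme-theoretic étale cover of the algebraic space $W\times_{\fX}X$) such that $\pi$ is $(N,\mathcal{F}_f)$-almost onto, respectively $(N,\mathcal{F})$-almost onto when $\fX$ is QCA. Since this holds for every such $\pi$, it is precisely the assertion of Theorem \ref{lem:key} in the case $\fX=[X/G]$, completing the deduction.
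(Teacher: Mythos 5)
Your proposal is correct and is exactly the paper's argument: the paper states this corollary with no written proof beyond the remark that it follows from Lemma \ref{lem:key.BG.special} combined with Corollary \ref{cor:one.for.all}, which is precisely the deduction you carry out (apply the lemma to the neutralizing map, then transfer to an arbitrary surjection). Your added observations --- that a flat group scheme is a flat group algebraic space and that the neutralizing map is a surjection of finite type --- are just the routine checks needed to invoke those two results.
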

%\RamiQ{chenged "lemma" to "corollary"}
%\begin{proof} We have a commutative diagram
%\[
%\xymatrix{X \times_\frak{X} Y \ar@{->}[r]^{\pi^*} \ar@{->}[d]^{s*} & Y \ar@{->}[d]^{s} \\ X \ar@{->}[r]^{\pi} & \frak{X}}
%\]
%By Lemma \ref{lem:key.BG.special}, $s$ is $\mathcal{G}$-almost onto. By assumption, $\pi$ is surjective, so $\pi ^*$ is surjective, so by Lemma \ref{lem:key.scheme}, $\pi ^*$ is $\mathcal{G}$-almost onto. Therefore $\pi ^* s=\pi s^*$ is $\mathcal{G}$-almost onto, and therefore $\pi$ is $\mathcal{G}$-almost onto.
%\end{proof}

In the following, by a gerbe, we mean a gerbe in the fppf topology, see \cite[Definition 95.27.1]{SP}.

\begin{lemma} \label{lem:key.gerbe.general} Theorem \ref{lem:key} holds if $\frak{X}$ is {a} gerbe over an algebraic space $[\frak{X}]$.
\end{lemma}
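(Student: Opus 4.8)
The plan is to construct a single surjective presentation of $\mathfrak{X}$ by a finite-type scheme which is $(k,\mathcal{S})$-almost onto for some $k$, and then to invoke Corollary \ref{cor:one.for.all}; here $\mathcal{S}=\mathcal{F}_f$ in general and $\mathcal{S}=\mathcal{F}$ when $\mathfrak{X}$ is QCA. Write $Y:=[\mathfrak{X}]$ and let $p:\mathfrak{X}\to Y$ be the gerbe. First I would trivialize the gerbe fppf-locally: since $p$ is an fppf gerbe and all objects are of finite type over $S$, there is a finite-type fppf surjection $v:V\to Y$ from a scheme $V$ such that the pulled back gerbe $\mathfrak{X}_V:=\mathfrak{X}\times_Y V\to V$ admits a section $s$. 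Because the relative inertia of a gerbe is flat and of finite presentation, $G:=s^{*}I_{\mathfrak{X}/Y}$ is a flat group algebraic space of finite type over $V$, and the section $s$ identifies $\mathfrak{X}_V$ with $B_V G=[V/G]$ (trivial action) so that $s$ becomes the neutralizing map $\nu:V\to[V/G]$. I would then note that $\nu$ is an fppf cover (its fiber product $V\times_{\mathfrak{X}_V}V$ is $G$, which is flat, finite type and has a section over $V$) and that $\mathfrak{X}_V\to\mathfrak{X}$ is surjective (a base change of $v$), so the composite $V\xrightarrow{\nu}\mathfrak{X}_V\to\mathfrak{X}$ is a surjective presentation of $\mathfrak{X}$ by a scheme of finite type over $S$.

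Next I would bound liftability through this presentation. Given $F\in\mathcal{S}$ and a point $x:\Spec F\to\mathfrak{X}$, compose with $p$ to obtain $\Spec F\to Y$. Applying Corollary \ref{cor:alg.space} to the surjection $v$ of the algebraic space $Y$, there is a separable extension $E/F$ with $[E:F]\le m$ and $\Spec E\in\mathcal{S}$ such that $\Spec F\to Y$ lifts to $V$ over $\Spec E$; since $Y$ is an algebraic space, the induced maps $\Spec E\to\mathfrak{X}$ and $\Spec E\to V$ agree over $Y$ canonically, and hence glue to a lift $\tilde x:\Spec E\to\mathfrak{X}_V$ of $x|_{\Spec E}$. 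Now $\mathfrak{X}_V\to\mathfrak{X}$ is representable ($V\to Y$ is representable because $V$ is a scheme and $Y$ an algebraic space), so each geometric automorphism group of $\mathfrak{X}_V$ is a closed subgroup scheme of the corresponding one of $\mathfrak{X}$, and therefore $\mathfrak{X}_V=B_V G$ is QCA whenever $\mathfrak{X}$ is. Thus Lemma \ref{lem:key.BG.special} applies to $\nu:V\to B_V G$ and gives a further separable extension $E'/E$ with $[E':E]\le n$ and $\Spec E'\in\mathcal{S}$ through which $\tilde x$ lifts to $V$. Then $\Spec E'\to V\to\mathfrak{X}_V\to\mathfrak{X}$ recovers $x|_{\Spec E'}$, so the presentation $V\to\mathfrak{X}$ is $(nm,\mathcal{S})$-almost onto, with $m,n$ existing unconditionally for $\mathcal{S}=\mathcal{F}_f$ and for $\mathcal{S}=\mathcal{F}$ when $\mathfrak{X}$ is QCA. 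Finally, Corollary \ref{cor:one.for.all} applied to $V\to\mathfrak{X}$ and to the given surjection $\pi:X\to\mathfrak{X}$ yields $N$ with $\pi$ being $(N,\mathcal{S})$-almost onto.

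The heart of the argument is formal, so the main thing to be careful about is the interface with the theory of fppf gerbes: that a finite-type fppf trivializing cover exists, that the relative inertia is flat and of finite presentation, and that a trivial gerbe is $B_V G$. I expect the genuinely delicate point to be the descent of the QCA property to $\mathfrak{X}_V$ — it hinges on $\mathfrak{X}_V\to\mathfrak{X}$ being representable and on closed subgroup schemes of linear algebraic groups being linear — together with the routine but necessary verification that everything remains of finite type over $S$. I do not anticipate a substantive obstacle beyond this bookkeeping.
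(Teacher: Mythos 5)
Your proof is correct and follows essentially the same route as the paper: lift the induced point of the coarse space $[\mathfrak{X}]$ using Corollary \ref{cor:alg.space}, trivialize the gerbe over a cover so that the fiber becomes a classifying stack $[V/G]$ and apply Lemma \ref{lem:key.BG.special} to lift the resulting torsor, then combine the two bounds and finish with Corollary \ref{cor:one.for.all}. The only (cosmetic) difference is that the paper takes the trivializing cover $V$ to be the given presentation $X$ itself, with section $(\pi,\mathrm{id}):X\to\mathfrak{X}\times_{[\mathfrak{X}]}X$, which dispenses with your separate existence claim for a finite-type fppf trivializing cover.
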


\begin{proof}
{Let $\cS$ be $\cF$ if $\fX$ is QCA and $\cF_f$ otherwise.}
Let $\tau:\frak{X} \rightarrow [\frak{X}]$ be the structure map. Consider the following diagram:
\[
\xymatrix{X \times_{[\frak{X}]} X \ar@{->}[r]^{\rho} \ar@{->}[d] & X \ar@{->}[d]^{\pi} \\ \frak{X} \times_{[\frak{X}]} X \ar@{->}[r] \ar@{->}[d] & \frak{X} \ar@{->}[d]^{\tau} \\ X \ar@{->}[r]^{\tau \circ \pi} & [\frak{X}]}
\]
The following hold: \begin{enumerate}
\item The map $\tau \circ \pi : X \rightarrow [\frak{X}]$ is surjective. By Corollary \ref{cor:alg.space}, it is $(N_1,\mathcal{F})$-almost onto, for some $N_1$.
\item $\tau:\frak{X} \rightarrow [\frak{X}]$ is a gerbe, and so $\frak{X} \times_{[\frak{X}]} X \rightarrow X$ is a gerbe by \cite[{\href{https://stacks.math.columbia.edu/tag/06QB}{Lemma 95.27.3}}]{SP}. The map $(\pi,id):X \rightarrow \frak{X} \times_{[\frak{X}]} X$ is a section, so $\frak{X} \times_{[\frak{X}]} X$ is isomorphic to {the classifying stack} $[G/X]$, for some flat group algebraic space $G$ over $X$, by \cite[{\href{https://stacks.math.columbia.edu/tag/06QB}{Lemma 95.27.6}}]{SP}. If $\mathfrak{X}$ is QCA, then so is $\frak{X} \times_{[\frak{X}]} X$.
\item \label{cond:a.o.gerbe} The map $X \times_{[\frak{X}]} X \rightarrow \frak{X} \times_{[\frak{X}]} X$ is surjective. By the previous claim and Lemma \ref{lem:key.BG}, it is {$(N_2,\cS)$-almost onto, for some $N_2$.}% (and is $(N_2,\mathcal{F})$-almost onto if
% $\frak{X} \times_{[\frak{X}]} X$
%\Rami{$\frak{X}$}
% is QCA).
\end{enumerate}
{We will prove that the map $X \times_{[\frak{X}]} X\to \fX$ is $(N_1N_2,\cS)$-almost onto. This is enough by Corollary  \ref{cor:one.for.all}}
Let $F$ be a field and let $pt:\Spec F \rightarrow \frak{X}$ be an $F$-point. There is a field extension $K \supset F$ of degree at most $N_1$ such that the composition $\Spec {K} \rightarrow \Spec {F} \rightarrow \frak{X}\rightarrow [\frak{X}]$ factors through a map $q:\Spec {K} \rightarrow X$. The map $(pt,q)$ defines a map $pt':\Spec {K}\rightarrow \frak{X} \times_{[\frak{X}]} X$. By \ref{cond:a.o.gerbe}, there is a field extension ${E} \supset {K}$ of degree at most $N_2$ such that the composition $\Spec {E} \rightarrow \Spec {K} \rightarrow \frak{X} \times_{[\frak{X}]} X$ factors through a map $r:\Spec {E} \rightarrow X \times_{[\frak{X}]} X$. It follows that the composition $\Spec {E} \rightarrow \Spec {F}\rightarrow \frak{X}$ factors through $\rho \circ r: \Spec {E} \rightarrow X$.

\end{proof}

\begin{proof}[Proof of Theorem \ref{lem:key}] {By}  \cite[{\href{https://stacks.math.columbia.edu/tag/06RB}{95.28}}]{SP},
there is a stratification of $\frak{X}$ by locally closed substacks $\frak{X}_i$ such that $\frak{X}_i$ are fppf gerbes over some algebraic spaces. Since $\mathfrak{X}$ is Noetherian, there are only finitely many $\mathfrak{X}_i$. {The assertion now follows from Lemmas \ref{lem:key.gerbe.general} and \ref{lem:strat}.}

 %For any field $F$ and any $F$-point $\Spec F \rightarrow \frak{X}$, the base-change $\Spec F \times_\frak{X} \sqcup \frak{X}_i \rightarrow \Spec F$ is a non-trivial closed immersion, so it is invertible. In particular, the map $\Spec F \rightarrow \frak{X}$ factors through $\Spec F \rightarrow \frak{X}_i$, for some $i$. Being the base change of a surjective map, the map $\frak{X}_i \times_\frak{X} X \rightarrow \frak{X}_i$ is surjective, and the claim follows from Lemma \ref{lem:key.gerbe.general}.
\end{proof}

\section{Onto presentations of stacks (Proof of Theorem \ref{thm:lift})} \label{sec:lift}

The proof of Theorem \ref{thm:lift} is based on Theorem \ref{lem:key} and the following proposition:

\begin{proposition}\label{lem:alm.onto.imp.onto}
Let   $\mathcal{S} \subset \mathcal{F}_{\mathrm{perf}}$ and {let} $\fX$ be a stack.
\begin{enumerate}
 \item \label{lem:alm.onto.imp.onto:1}
If there is an $(n,\mathcal{S})$-almost onto presentation of $\fX$ by a scheme, then there is an $\mathcal{S}$-onto presentation of $\fX$ by an algebraic space.
 \item \label{lem:alm.onto.imp.onto:2} If  $\fX$ is an algebraic space and
 there is an $(n,\mathcal{S})$-almost onto presentation of $X$ by a scheme, then there is an $\mathcal{S}$-onto presentation by a scheme.
\end{enumerate}
\end{proposition}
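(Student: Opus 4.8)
The plan is to run the construction outlined in Steps~1--7 of the introduction, once for part~(1) and twice for part~(2). I will freely use the following machinery, developed earlier: the internal hom of a finite diagram of $S$-schemes, together with its representability criteria; the \v{C}ech nerve $[\pi]_\bullet$ of a presentation, which is coskeletal and hence pinned down by finitely many of its terms; the functorial construction $f_{\pi,\tau}\colon{[\pi]_\bullet}^{[\tau]_\bullet}\to\fX$ attached to a presentation $\pi$ of $\fX$ and an \'etale cover $\tau$ of the base; and the universal family $\tau_n\colon\mathbb{U}_n'\to\mathbb{U}_n$ of separable extensions of degree $\le n$, which is finite \'etale with $\mathbb{U}_n$ smooth over $S$, and is such that every separable extension $E/F$ of degree $\le n$ with $\Spec F\in\mathcal{S}$ is the pullback of $\tau_n$ along some point $v\colon\Spec F\to\mathbb{U}_n$.

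\emph{Part (1).} Let $\pi\colon X\to\fX$ be the given $(n,\mathcal{S})$-almost onto presentation by a scheme. Twisting by $\mathbb{U}_n$, form the presentation $\pi_n\colon X\times_S\mathbb{U}_n\to\fX$ over the base $\mathbb{U}_n$; set $X_n:={[\pi_n]_\bullet}^{[\tau_n]_\bullet}$ and $f:=f_{\pi_n,\tau_n}\colon X_n\to\fX$. Since the terms of $[\tau_n]_\bullet$ are finite \'etale over $\mathbb{U}_n$, forming the internal hom is a Weil restriction along finite \'etale maps, so the representability and smoothness statements for $f_{\pi,\tau}$ yield that $X_n$ is an algebraic space of finite type over $S$ and that $f$ is a presentation; it remains to prove that $f$ is $\mathcal{S}$-onto. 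Let $\Spec F\in\mathcal{S}$ and $u\in\fX(F)$. By $(n,\mathcal{S})$-almost ontoness of $\pi$ there are a separable extension $E/F$ of degree $\le n$ and a lift $\tilde u\colon\Spec E\to X$ of the composite $\Spec E\to\Spec F\xrightarrow{u}\fX$; choose $v\colon\Spec F\to\mathbb{U}_n$ with $\mathbb{U}_n'\times_{\mathbb{U}_n,v}\Spec F\cong\Spec E$. Base-changing $[\tau_n]_\bullet$ and $[\pi_n]_\bullet$ along $v$ gives, respectively, the \v{C}ech nerve of $\Spec E\to\Spec F$ and the \v{C}ech nerve of $X\times_S\Spec F\to\fX\times_S\Spec F$, and $\psi_0:=(\tilde u,\mathrm{str}_{E/F})\colon\Spec E\to X\times_S\Spec F$ is compatible with the two augmentations (both are $(u\circ\mathrm{str}_{E/F},\mathrm{str}_{E/F})$ into $\fX\times_S\Spec F$), so by coskeletality it extends uniquely to a map of simplicial $F$-schemes. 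The pair $(v,\psi_\bullet)$ is an $F$-point of $X_n$, and unwinding the definition of $f_{\pi_n,\tau_n}$ shows that it maps to $u$; hence $f$ is $\mathcal{S}$-onto.

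\emph{Part (2).} Now $\fX$ is an algebraic space. Running part~(1) yields an $\mathcal{S}$-onto presentation $f\colon X_n\to\fX$ with $X_n$ an algebraic space, which in general is not a scheme because the higher terms of $[\pi_n]_\bullet$ are only algebraic spaces. Choose a separated \'etale surjection $\rho\colon Y\to X_n$ with $Y$ an affine scheme. By Corollary~\ref{cor:alg.space} applied to the surjection $\rho$ onto the algebraic space $X_n$, $\rho$ is $(m,\mathcal{F})$-almost onto for some $m$, hence a fortiori $(m,\mathcal{S})$-almost onto. Applying the construction of part~(1) to $\rho$ with the integer $m$ produces a presentation $g:=f_{\rho_m,\tau_m}\colon Y_m\to X_n$ which is $\mathcal{S}$-onto by the same diagram chase. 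This time the terms of $[\rho_m]_\bullet$ are the fiber powers $Y\times_{X_n}\cdots\times_{X_n}Y$ base-changed along $\mathbb{U}_m\to S$; since $\rho$ is separated and \'etale, these fiber powers are schemes, hence so are the terms of $[\rho_m]_\bullet$, and therefore $Y_m$ is representable by a scheme. Finally $f\circ g\colon Y_m\to\fX$ is a presentation of $\fX$ by a scheme, and it is $\mathcal{S}$-onto because liftability is transitive along presentations. This proves part~(2).

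\emph{Main obstacle.} The diagram chases in parts~(1) and~(2) are routine; the genuine difficulties lie in the ingredients I assumed. One must (a) develop internal homs of finite diagrams and prove they are representable by algebraic spaces under suitable finiteness hypotheses, and by schemes once the terms into which one restricts are affine enough; (b) show that $f_{\pi,\tau}$ is not merely an epimorphism of sheaves but a \emph{smooth} --- hence bona fide --- presentation, which is where the finite \'etaleness of $[\tau]_\bullet$, and the compatibility of internal homs with \v{C}ech nerves and coskeleta, are essential; and (c) construct $\mathbb{U}_n$ and verify that it captures all separable extensions of degree $\le n$, which is also where the hypothesis $\mathcal{S}\subseteq\mathcal{F}_{\mathrm{perf}}$ enters. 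Steps~(a) and~(b) are the main obstacle.
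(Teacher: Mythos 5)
Your proof is correct, and part (1) is essentially the paper's argument: twist by $\mathbb{U}_n$, form the internal hom of \v{C}ech nerves $[\pi_n]_\bullet^{[\tau_n]_\bullet}$, and use the lifting property of $f_{\pi_n,\tau_n}$ together with the fact that every separable extension of degree $\le n$ of a field in $\mathcal{S}$ is a fiber of $\tau_n$ (the paper packages the diagram chase you spell out into Lemma \ref{lem:improving.pres}). Where you diverge is part (2). You assert that $X_n$ fails to be a scheme "because the higher terms of $[\pi_n]_\bullet$ are only algebraic spaces," but in the setting of part (2) the target $\fX$ is an algebraic space, so $X\times_\fX X$ and all higher fiber powers are schemes — and, once one takes $X$ affine (harmless, as in the paper), they are quasi-affine by Corollary \ref{cor:quasi.affine.presentation}. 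The paper therefore gets part (2) in a single pass: the "moreover" clause of Lemma \ref{lem:improving.pres} (via Corollary \ref{cor:hom.diagram.representable}\eqref{cor:hom.diagram.representable:2}) says the internal hom is already a scheme. Your two-pass workaround — cover $X_n$ by an affine $Y$, invoke Corollary \ref{cor:alg.space} to make $Y\to X_n$ almost onto, and rerun the construction — is valid (the second-pass fiber powers are quasi-affine for exactly the reason the first-pass ones were, and ontoness composes), but it imports Theorem \ref{lem:key} for algebraic spaces as an extra ingredient and duplicates a layer of the construction. Interestingly, your two-step descent is precisely what the paper does one level up, in the deduction of Theorem \ref{thm:lift} from this proposition, where $\fX$ is a genuine stack and the single pass really does only produce an algebraic space; so nothing is lost, you have just shifted where that step happens.
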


The proof of Proposition \ref{lem:alm.onto.imp.onto} will be given in \S\ref{sec:prf.alm.onto}; the proof uses several auxiliary results which we prove in \S\ref{sec:prf.alm.onto} and \S\ref{sec:improving.pres}. We now show how to deduce Theorem \ref{thm:lift} from Proposition \ref{lem:alm.onto.imp.onto}.

We will need the following:

\begin{lemma} \label{lem:left.lift} Let $A$ be a local ring, $I$ an ideal in $A$ such that $(A,I)$ is a Henselian pair (see \cite[15.11]{SP}). Then the embedding $\Spec(A/I) \rightarrow \Spec(A)$ has the left lifting property with respect to smooth maps of schemes, i.e., for any commutative diagram
\[
\xymatrix{\Spec(A/I) \ar@{->}[r] \ar@{->}[d] & X \ar@{->}[d] \\ \Spec(A) \ar@{->}[r] & Y}
\]
such that $X,Y$ are schemes and the map $X \rightarrow Y$ is smooth, there is a map $\Spec(A) \rightarrow X$ such that the diagram
\[
\xymatrix{\Spec(A/I) \ar@{->}[r] \ar@{->}[d] & X \ar@{->}[d] \\ \Spec(A) \ar@{->}[r] \ar@{->}[ru] & Y}
\]
is commutative.
\end{lemma}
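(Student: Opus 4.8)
The statement is essentially a concrete, scheme-theoretic version of the smoothness lifting criterion for Henselian pairs. The plan is to reduce the problem to an infinitesimal-lifting statement plus an application of the defining property of a Henselian pair. First I would reduce to the affine case: since $X\to Y$ is smooth and we are given a map $\Spec(A/I)\to X$ together with $\Spec(A)\to Y$, we may replace $X$ by an affine open neighborhood of the image of the closed point of $\Spec(A/I)$ (which is also the closed point of $\Spec(A)$, as $A$ is local and $I\subseteq A$); shrinking $Y$ likewise, we may assume $X=\Spec(B)$, $Y=\Spec(C)$ are affine with $C\to B$ smooth. So the task becomes: given a $C$-algebra map $B\to A/I$ and the structure map $C\to A$, find a $C$-algebra lift $B\to A$.

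Next I would recall that, because $(A,I)$ is a Henselian pair, $A$ is $I$-adically ``close'' to its quotients in the sense that lifting problems against smooth morphisms can be solved. The cleanest route is to invoke the Stacks Project characterization of Henselian pairs (tag 09ZL / 0DYD): $(A,I)$ is Henselian if and only if every smooth morphism $\Spec(B)\to\Spec(A)$ admits a section over $\Spec(A)$ whenever it admits one over $\Spec(A/I)$. Concretely, base-change the smooth map $\Spec(B)\to\Spec(C)$ along $\Spec(A)\to\Spec(C)$ to get a smooth morphism $\Spec(B\otimes_C A)\to\Spec(A)$; the given map $B\to A/I$ together with $A\to A/I$ furnishes a section of this over $\Spec(A/I)$, and the Henselian-pair property produces a section over $\Spec(A)$, i.e. a $C$-algebra map $B\to A$, which is exactly the desired lift. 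If one prefers to avoid quoting that characterization as a black box, the alternative is to build the lift by hand: write $B$ as a filtered colimit / use the standard smooth presentation $B=C[x_1,\dots,x_n]/(f_1,\dots,f_c)$ with the relevant Jacobian minor a unit, lift the images of the $x_i$ from $A/I$ to $A$ arbitrarily, and then correct them using the multivariable Hensel's lemma for Henselian pairs to kill the $f_j$ — this is precisely where the Henselian hypothesis is used.

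The main obstacle is the passage from the residue-level solution to the genuine solution over $A$: this is the entire content of the Henselian condition, and the only subtlety is making sure the version of ``Hensel's lemma'' being used applies to Henselian \emph{pairs} $(A,I)$ rather than only to complete or Henselian local rings, and that it handles systems of equations (not just one variable) since smoothness gives a square Jacobian block that is invertible rather than a single derivative that is a unit. Once the correct form of the implicit-function/Hensel statement for Henselian pairs is in hand, the rest is the routine affine reduction above, so I would spend most of the write-up making the reduction precise and then cite (or quote with a one-line proof) the Henselian-pair lifting property for smooth morphisms.
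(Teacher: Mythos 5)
Your proposal is correct. Both you and the paper begin with the same reduction: since $A$ (hence $A/I$) is local, the map from $\Spec(A/I)$ lands in any open neighborhood of the image of the closed point, so one may shrink $X$ (and $Y$) to affines. Where you diverge is in how the Henselian hypothesis is brought to bear. Your primary route base-changes the smooth map along $\Spec(A)\to Y$ and then quotes, as a black box, the characterization of Henselian pairs by the lifting property against \emph{smooth} ring maps (Stacks Project, the smooth analogue of the \'etale lifting criterion); after that the lemma is immediate. The paper instead keeps only the \'etale lifting property (which is much closer to the definition of a Henselian pair) and supplies the missing step itself via the local structure theorem for smooth morphisms: locally $X\to Y$ factors as an \'etale map $U_i\to Y\times\mathbb{A}^n$ followed by the projection, the projection case being handled by the surjectivity of $A\to A/I$ and the \'etale case by the Henselian property. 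Your sketched alternative --- presenting $B=C[x_1,\dots,x_n]/(f_1,\dots,f_c)$ with an invertible Jacobian minor, lifting the coordinates arbitrarily, and correcting by a multivariable Hensel argument --- is essentially the algebraic form of the paper's factorization, so the two arguments genuinely converge there. The trade-off is the usual one: your citation-based route is shorter but rests on a stronger quoted theorem, while the paper's version is self-contained modulo the local structure of smooth morphisms and the \'etale characterization of Henselian pairs.
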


\begin{proof} Denote the map $X \rightarrow Y$ by $\phi$. There is a Zariski open cover $X=\bigcup U_i$ such that $\phi |_{U_i}$ factors as the composition of an etale map $\psi_i:U_i \rightarrow Y \times \mathbb{A} ^n$ and the projection $Y \times \mathbb{A} ^n \rightarrow Y$. Since $A/I$ is local, we can replace $X$ by some $U_i$, so it is enough to prove the claim in the following cases: \begin{enumerate}
\item $\phi$ is the projection $Y \times \mathbb{A} ^n \rightarrow Y$. The claim follows since the map $A \rightarrow A/I$ is onto.
\item $\phi$ is etale. The claim follows from the definition of Henselian pair.
\end{enumerate}
\end{proof}

\begin{corollary} \label{cor:Henselian.pair} Let $A$ be a local ring and let $I$ be an ideal in $A$ such that $(A,I)$ is a Henselian pair. Let $\phi:X \rightarrow \mathfrak{X}$ be a $\left\{ \Spec(A/I) \right\}$-onto presentation. Assume that, for any algebraic space $\mathfrak{B}$ and any $A/I$-point $r:\Spec(A/I) \rightarrow \mathfrak{B}$, there is a presentation $\psi: B \rightarrow \mathfrak{B}$ such that $r$ is $\psi$-liftable. Then $\phi$ is $\left\{ \Spec(A) \right\}$-onto.
\end{corollary}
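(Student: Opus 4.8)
The plan is to lift an arbitrary $A$-point of $\fX$ to $X$ by introducing an auxiliary algebraic space and applying the Henselian lifting Lemma~\ref{lem:left.lift} to its smooth structure map. Fix an $A$-point $\xi\colon\Spec(A)\to\fX$; we must show it is $\phi$-liftable. First I would restrict $\xi$ along the closed immersion $\Spec(A/I)\to\Spec(A)$ to obtain an $(A/I)$-point $\bar\xi\colon\Spec(A/I)\to\fX$. Since $\phi$ is $\{\Spec(A/I)\}$-onto by hypothesis, $\bar\xi$ is $\phi$-liftable: there is a map $\bar g\colon\Spec(A/I)\to X$ and an isomorphism $\phi\circ\bar g\cong\bar\xi$.

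Next I would form the fiber product $\mathfrak{B}:=\Spec(A)\times_{\fX}X$. Because $\fX$ is an algebraic stack it has representable diagonal, so $\mathfrak{B}$ is an algebraic space, and its structure map $p\colon\mathfrak{B}\to\Spec(A)$ is smooth and surjective, being a base change of $\phi$. The pair consisting of $\bar g$ together with the isomorphism $\phi\circ\bar g\cong\bar\xi$ is, by the universal property of the fiber product, exactly the data of an $(A/I)$-point $\bar r\colon\Spec(A/I)\to\mathfrak{B}$ whose composite with $p$ is the canonical map $\Spec(A/I)\to\Spec(A)$. Now apply the hypothesis of the corollary to the algebraic space $\mathfrak{B}$ and the point $\bar r$: there is a presentation $\psi\colon B\to\mathfrak{B}$ by a scheme $B$ (so $\psi$ is smooth and surjective) such that $\bar r$ is $\psi$-liftable, i.e.\ there is $\bar h\colon\Spec(A/I)\to B$ with $\psi\circ\bar h\cong\bar r$. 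Set $q:=p\circ\psi\colon B\to\Spec(A)$; then $q$ is smooth as a composite of smooth morphisms, and $q\circ\bar h$ equals the canonical map $\Spec(A/I)\to\Spec(A)$ — equality on the nose, not merely up to $2$-isomorphism, since the target $\Spec(A)$ is a scheme.

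We therefore have a commutative square
\[
\xymatrix{\Spec(A/I)\ar@{->}[r]^{\bar h}\ar@{->}[d] & B\ar@{->}[d]^{q}\\ \Spec(A)\ar@{->}[r]^{\Id}\ar@{-->}[ur] & \Spec(A)}
\]
with $q$ smooth and $(A,I)$ a Henselian pair. By Lemma~\ref{lem:left.lift} there is a map $k\colon\Spec(A)\to B$ with $q\circ k=\Id_{\Spec(A)}$. Then $\psi\circ k\colon\Spec(A)\to\mathfrak{B}$ satisfies $p\circ(\psi\circ k)=q\circ k=\Id_{\Spec(A)}$, so it is a section of $p$. Unwinding the universal property of $\mathfrak{B}=\Spec(A)\times_{\fX}X$ once more, a section of $p$ is precisely a map $g\colon\Spec(A)\to X$ together with an isomorphism $\phi\circ g\cong\xi$. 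Hence $\xi$ is $\phi$-liftable, and since $\xi$ was arbitrary, $\phi$ is $\{\Spec(A)\}$-onto.

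Regarding the difficulty: all of the steps are formal, and the conceptual content is simply the choice of the auxiliary algebraic space $\mathfrak{B}=\Spec(A)\times_{\fX}X$, which converts "lift a point through $\phi$" into "find a section of a smooth morphism to $\Spec(A)$", where the Henselian hypothesis can be applied; the hypothesis of the corollary about presentations of algebraic spaces is exactly what lets us replace $\mathfrak{B}$ by the scheme $B$, so that Lemma~\ref{lem:left.lift} (stated for schemes) becomes applicable. The only point needing a moment's care is the observation that the square above commutes strictly rather than up to $2$-isomorphism, which holds automatically because its bottom-right corner is a scheme and the corresponding mapping groupoid is discrete.
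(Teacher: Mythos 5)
Your proof is correct and is essentially identical to the paper's argument: both form the fiber product $\Spec(A)\times_{\fX}X$, use the hypothesis to present it by a scheme $B$, and apply Lemma~\ref{lem:left.lift} to the smooth map $B\to\Spec(A)$ to produce a section. Your added remark on strict versus $2$-categorical commutativity of the lifting square is a reasonable point of care that the paper leaves implicit.
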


\begin{proof} Suppose that $q:\Spec(A) \rightarrow \mathfrak{X}$ be an $A$-point. Since $\phi$ is $\left\{ \Spec(A/I) \right\}$-onto, we can lift the composition $\Spec(A/I) \rightarrow \Spec(A) \overset{q}{\rightarrow} \mathfrak{X}$ to a map $\Spec(A/I) \rightarrow X$. This gives a map $r:\Spec(A/I) \rightarrow X \times_\mathfrak{X} \Spec(A)$. By assumption, there is a scheme $B$ and a presentation $\psi :B \rightarrow X \times_\mathfrak{X} \Spec(A)$ such that $r$ is $\psi$-liftable. Let $r':\Spec(A/I) \rightarrow B$ be a lift of $r$. Applying Lemma \ref{lem:left.lift} to the diagram
\[
\xymatrix{\Spec(A/I) \ar@{->}[r]^{r'} \ar@{->}[d] & B \ar@{->}[d] \\ \Spec(A) \ar@{->}[r] & \Spec(A)}
\]
we get a map $s:\Spec(A) \rightarrow B$. The composition of $s$ and the projection to $X$ is a lift of $q$.
\end{proof}

We can now prove Theorem \ref{thm:lift}:

\begin{proof}[Proof of Theorem \ref{thm:lift}] We first show the claim replacing $\mathcal{H}_{f},\mathcal{H}_{r},\mathcal{H}_{perf}$ by $\mathcal{F}_{f},\mathcal{F}_{r},\mathcal{F}_{perf}$. Let $\phi: X \to \fX$ be a presentation of $\fX$ by a scheme $X$. By Theorem \ref{lem:key}, there exists an integer $n$ such that $\phi$ is $(n,\cF_f)$-almost onto {(or $(n,\cF_{})$}-almost onto if $\fX$ is QCA). By definition, $\phi$ is also $(2,\cF_r)$-almost onto.  By Proposition \ref{lem:alm.onto.imp.onto}\eqref{lem:alm.onto.imp.onto:1}, there exists an algebraic space $X'$ and a presentation $\psi : X' \rightarrow \mathfrak{X}$ which is $(\cF_r\cup \cF_f)$-onto ($\cF_{\mathrm{perf}}$-onto if $\mathfrak{X}$ is QCA). Let $X'' \rightarrow X'$ be a presentation of $X'$ by a scheme $X''$.  Since $X'$ is $QCA$, applying Theorem \ref{lem:key} and Proposition \ref{lem:alm.onto.imp.onto}\eqref{lem:alm.onto.imp.onto:2}, we obtain a scheme $X'''$ and an $\cF_{\mathrm{perf}}$-onto presentation $\psi :X''' \to X'$. The composition $\phi \circ \psi:X''' \rightarrow \mathfrak{X}$ is a presentation which is $(\cF_r\cup \cF_f)$-onto ($\cF_{\mathrm{perf}}$-onto if $\mathfrak{X}$ is QCA).

The claim of the theorem for $\mathcal{H}_{f},\mathcal{H}_{r},\mathcal{H}_{perf}$ follows now by Corollary \ref{cor:Henselian.pair}.
\end{proof}

\subsection{Internal Hom} \label{sec:internal.hom}

\begin{definition}
Let $X, Y$ be {$S$-}schemes. Let $X^{\wedge}_S Y$ be the contravariant functor from the category of $S$-schemes to the category of sets defined by $$(X^{\wedge}_S Y)(T):=Mor(Y\times_S T,X).$$ \Rami{If the base scheme $S$ is clear from the context we will omit it from the notation or simply denote this functor by $X^{Y}$}
\end{definition}

\begin{lemma} \label{lem:power.representable}
$ $
\begin{enumerate}
%\item If $Y \rightarrow S$ is finite and flat and if $X \rightarrow S$  is affine, then $X^{\wedge}_S Y$ is representable  (by a finite type schem).
%\item If $Y \rightarrow S$ is finite and flat, then $X^{\wedge}_S Y$ is representable by a localy finite type scheme which might be not qusi-compact.
\item If $Y \rightarrow S$ is finite and etale,  then $X^{\wedge}_S Y$ is representable by an algebraic space (of finite type).% (by a finite type schem).
\item If $Y \rightarrow S$ is finite and etale,  and $X$ is quasi-projective, then $X^{\wedge}_S Y$ is representable by a scheme (of finite type).
\end{enumerate}
\end{lemma}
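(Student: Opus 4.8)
The strategy is to build the internal hom as a relative Weil restriction along the finite étale map $Y \to S$, so that the representability statements become instances of the classical representability of restriction of scalars. First I would reduce to the case where $Y \to S$ is the disjoint union of copies of $S$ étale-locally: since $Y \to S$ is finite étale, there is an étale cover $S' \to S$ such that $Y \times_S S' \cong \coprod_{i=1}^d S'$ with $d$ the degree. After this base change, $(X^\wedge_S Y) \times_S S'$ is the presheaf $T \mapsto \mathrm{Mor}_{S'}\big((\coprod_i S') \times_{S'} T, X_{S'}\big) = \prod_{i=1}^d \mathrm{Mor}_{S'}(T, X_{S'})$, which is manifestly represented by $X_{S'}^{\times_{S'} d}$, a scheme (resp. quasi-projective scheme) of finite type over $S'$. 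So the functor becomes representable after an étale base change.

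The second step is to descend this representability back down to $S$. For part (1), I invoke the fact that an fppf sheaf that becomes representable by an algebraic space after a faithfully flat (here étale) base change is itself representable by an algebraic space; concretely, $X^\wedge_S Y$ is the quotient of the algebraic space $(X^\wedge_S Y)\times_S S'$ by the étale equivalence relation coming from the two pullbacks to $S' \times_S S'$, and such quotients exist in the category of algebraic spaces. Finite-typeness is preserved because it is étale-local on the base and the model over $S'$ has it. One does need to first check that $X^\wedge_S Y$ is a sheaf for the fppf (or étale) topology, which is immediate from the sheaf property of $\mathrm{Mor}(Y \times_S T, X)$ as a functor of $T$ and from $Y \times_S (-)$ preserving covers.

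For part (2), the extra input is an ample line bundle. The point is that when $X$ is quasi-projective over $S$, the Weil restriction $\mathrm{Res}_{Y/S} X$ is again quasi-projective — this is the classical theorem on restriction of scalars (e.g.\ BLR, \emph{Néron Models}, \S7.6, Theorem 4, or the analogous Stacks Project statement): one shows it is quasi-projective by pushing forward a relatively ample bundle, using that $Y \to S$ is finite (so that $f_*$ of an ample bundle can be used to build a projective embedding of the restriction). So rather than only descending the algebraic-space structure, I would directly quote the representability of Weil restriction by a quasi-projective scheme under "finite flat base, quasi-projective target" hypotheses, after identifying $X^\wedge_S Y$ with $\mathrm{Res}_{Y/S}(X \times_S Y)$: indeed $\mathrm{Mor}_S(Y \times_S T, X) = \mathrm{Mor}_Y(Y \times_S T,\, X \times_S Y) = \mathrm{Mor}_S(T,\ \mathrm{Res}_{Y/S}(X\times_S Y))$ by the adjunction defining Weil restriction.

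The main obstacle I anticipate is purely bookkeeping: making sure the adjunction identification $X^\wedge_S Y \cong \mathrm{Res}_{Y/S}(X \times_S Y)$ is set up with the correct variances, and that the cited representability theorem for $\mathrm{Res}_{Y/S}$ applies in the generality needed here (finite étale $Y/S$, $X$ of finite type in case (1), quasi-projective in case (2)). If one prefers a self-contained argument avoiding a black-box citation for Weil restriction, the real work is the descent step in part (1): verifying the fppf-sheaf condition and then invoking the representability of étale sheaf quotients — but both are standard, so no genuinely hard estimate or construction is involved.
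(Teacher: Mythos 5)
Your proposal is correct and follows essentially the same route as the paper: trivialize $Y\to S$ by an \'etale cover $S'\to S$ so that the base-changed functor becomes the $d$-fold fiber product $X_{S'}^{\times_{S'}d}$, check the sheaf condition, and descend, with finite-typeness being \'etale-local on the base. The only cosmetic difference is in part (2), where you identify $X^{\wedge}_S Y$ with $\mathrm{Res}_{Y/S}(X\times_S Y)$ and cite BLR's theorem on restriction of scalars, while the paper cites the analogous representability statement from Demazure--Gabriel and then checks finite type by the same \'etale-local argument.
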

Although the statement is standard, we did not find a complete proof in the literature, so we deduce it from a similar statement appearing in \cite[I \S1 6.6]{DeGa}. For another version, see \cite{Ol}. For the proof we will need the following simple lemmas:
\begin{lemma} \label{lem:etale.trivial} Let $S$ be a connected scheme and let $S'\to S$ be a finite etale map. Then there exists an etale cover $\eta: T\to S$ such that $$T\times _S S' \cong T \sqcup \cdots  \sqcup T,$$
as $T$-schemes.
\end{lemma}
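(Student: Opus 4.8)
Let $S$ be a connected scheme and let $S'\to S$ be a finite etale map. Then there exists an etale cover $\eta: T\to S$ such that $T\times_S S' \cong T \sqcup \cdots \sqcup T$ as $T$-schemes.

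The statement is classical: a finite \'etale morphism becomes a trivial (split) cover after a suitable base change. The plan is to reduce to the case of constant degree and then induct on that degree, peeling off one sheet at a time by exploiting the fact that the diagonal of a finite \'etale morphism is a clopen immersion.

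Since $S$ is connected and $S' \to S$ is finite \'etale, its degree is a (locally constant, hence) constant integer $n$. I will induct on $n$: for $n \le 1$ the morphism is an isomorphism and $T = S$ works. For $n \ge 2$, pick a connected component $S'_1$ of $S'$ (there are finitely many, everything being Noetherian). The map $S'_1 \to S$ is again finite \'etale, so its image is open; being finite it is also closed; as $S$ is connected the image is all of $S$, so $S'_1 \to S$ is itself a finite \'etale cover. Base-changing $S' \to S$ along it gives a finite \'etale map $S' \times_S S'_1 \to S'_1$ of degree $n$. Inside it, the diagonal $S'_1 \hookrightarrow S'_1 \times_S S'_1 \hookrightarrow S' \times_S S'_1$ is an immersion which is open (because $S'_1 \to S$, being \'etale, is unramified, so has open diagonal) and closed (because $S'_1 \to S$, being finite, is separated, so has closed diagonal); hence $\Delta(S'_1)$ is clopen, and $S' \times_S S'_1 = \Delta(S'_1) \sqcup W$, where the clopen complement $W \to S'_1$ is finite \'etale of degree $n-1$.

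Now apply the inductive hypothesis to the connected base $S'_1$ and the degree-$(n-1)$ cover $W \to S'_1$: there is a finite \'etale cover $T \to S'_1$ with $T \times_{S'_1} W \cong T \sqcup \cdots \sqcup T$ ($n-1$ copies). Let $\eta \colon T \to S'_1 \to S$ be the composite, a composition of finite \'etale covers and hence itself a finite \'etale cover of $S$. Base-changing the decomposition $S' \times_S S'_1 = \Delta(S'_1) \sqcup W$ along $T \to S'_1$ yields $T \times_S S' \cong (T \times_{S'_1} \Delta(S'_1)) \sqcup (T \times_{S'_1} W) \cong T \sqcup (T \sqcup \cdots \sqcup T)$, i.e.\ $n$ copies of $T$ as $T$-schemes, which completes the induction.

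The only point requiring care is the assertion that the diagonal of a finite \'etale morphism is a clopen immersion — this is exactly where ``\'etale'' (open diagonal, via unramifiedness) and ``finite'' (closed diagonal, via separatedness) are both used; the rest is bookkeeping with degrees and with the fact that a clopen subscheme of a scheme that is finite \'etale over a base is again finite \'etale over that base. I do not expect any genuine obstacle here. As an alternative one can give a non-inductive proof: take $T$ to be the clopen subscheme of the $n$-fold fiber power $S' \times_S \cdots \times_S S'$ on which the $n$ tautological sections are pairwise disjoint (cut out by the clopen conditions coming from $S' \times_S S' = \Delta \sqcup W$); then $T \to S$ is finite \'etale and surjective, since its geometric fibers are the orderings of the corresponding geometric fiber of $S'$ and so are nonempty of size $n!$, and over $T$ the $n$ pairwise disjoint sections of $S' \times_S T \to T$ exhaust it by a degree count, trivializing the cover.
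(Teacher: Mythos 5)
Your proof is correct and is essentially the paper's own argument: both induct on the degree and peel off the (clopen) diagonal inside $S'\times_S S'$ after base-changing along a connected finite \'etale piece of $S'$, the only cosmetic difference being that you work with a chosen connected component $S'_1$ where the paper first reduces to $S'$ connected. Your added justification that the diagonal is clopen (open from unramifiedness, closed from separatedness) and your sketched non-inductive alternative via the $n$-fold fiber power are both fine but do not change the substance.
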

\begin{proof}
The proof is by induction on the degree $d$ of the map $S'\to S$.
Without loss of generality, we may assume that $S'$ is connected.
 the base $d=0$ is obvious.  Without loss of generality we can assume that $S' \to S$ is a cover. Consider the diagram
$$\xymatrix{\Delta S' \ar@{->}[r]& S'\times_S S' \ar@{->}[r]^{}  \ar@{->}[d]^{} & S' \ar@{->}[d]^{} \\& S' \ar@{->}[r]^{}& S}$$

Let $U:=S'\times_S S' \smallsetminus \Delta S'$. The map $U \to S'$ is finite etale map of degree $d-1$. Thus by induction assumption there is an etale cover $T \to S'$ such that
$$T\times _S' U \cong T \sqcup \cdots  \sqcup T.$$ Composing $T \to S' \to S$ we obtain the required cover.
\end{proof}

\begin{lemma}[{See e.g. \cite[Lemma 34.20.10]{SP}}] \label{lem:finite.type.local}
The property of being of finite type is local in the fppf topology on the target. Namely, let $X\to S$  be a (not necessarily finite type) $S$-scheme and $\phi:T\to S$ be a faithfully flat morphism of finite type. Assume that $X\times_S T$ is {of} finite type over $T$. Then, $X\to S$  is of finite type.
\end{lemma}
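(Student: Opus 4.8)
The plan is to reduce to the affine case and then to a classical statement about descent of finite generation for algebras. First I would note that being of finite type is a property that can be checked locally on the target: so I may choose an affine open cover $S = \bigcup \Spec A_i$, and it suffices to show that each $X \times_S \Spec A_i \to \Spec A_i$ is of finite type. Pulling back the faithfully flat finite-type morphism $\phi : T \to S$ over $\Spec A_i$, I obtain a faithfully flat finite-type morphism $T_i \to \Spec A_i$, and since finite type is stable under base change, $X \times_S T_i = (X\times_S T)\times_T T_i$ is of finite type over $T_i$. Hence I may assume from the start that $S = \Spec A$ is affine. Now $T \to S$ is faithfully flat and of finite type between affine-ish schemes; covering $T$ by finitely many affine opens $\Spec B_j$ (finitely many because $T \to S$ is quasi-compact, being of finite type) and using that finite type can be checked after passing to a Zariski cover that is jointly surjective, I may further assume $T = \Spec B$ with $A \to B$ faithfully flat and of finite type.

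Next I would handle $X$ itself. The morphism $X \to S = \Spec A$ need not be quasi-compact a priori, so I cannot immediately assume $X$ affine. However, $X \times_S T \to T$ is of finite type, hence quasi-compact; since $T \to S$ is faithfully flat and quasi-compact, quasi-compactness descends, so $X \to S$ is quasi-compact. Similarly $X \times_S T \to T$ is quasi-separated, so $X \to S$ is quasi-separated (quasi-separatedness also descends along fppf covers). Thus $X$ is covered by finitely many affine opens; but I want to reduce to $X$ affine, which requires a little care because an affine open of $X$ need not pull back to an affine open of $X\times_S T$ hitting everything. Instead I would argue directly: cover $X$ by finitely many affine opens $\Spec R_k$; it suffices to show each $R_k$ is a finitely generated $A$-algebra. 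Now $\Spec R_k \times_S T$ is an open subscheme of $X\times_S T$, hence of finite type over $T$, and it is affine over $T$ (being $\Spec(R_k \otimes_A B)$). So I have reduced to: if $A \to B$ is faithfully flat of finite type and $R \otimes_A B$ is a finitely generated $B$-algebra, then $R$ is a finitely generated $A$-algebra.

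For this final algebraic statement I would invoke the cited result \cite[Lemma 34.20.10]{SP} (or equivalently the standard fact on fppf descent of finite presentation/finite type for algebras): pick finitely many elements of $R\otimes_A B$ generating it as a $B$-algebra, clear denominators so that they come from $R$ (possible after scaling, or simply take the finitely many $A$-algebra generators whose images generate), let $R' \subseteq R$ be the finitely generated $A$-subalgebra they span, and observe that $R' \otimes_A B \to R \otimes_A B$ is surjective; by faithful flatness of $B$ over $A$, $R' \to R$ is surjective, so $R = R'$ is finitely generated over $A$. I expect the main obstacle to be the bookkeeping in the reduction steps — specifically ensuring that quasi-compactness and quasi-separatedness of $X \to S$ descend so that $X$ can be covered by finitely many affines, and matching up affine opens of $X$ with affine opens of $X \times_S T$ — rather than the core descent argument, which is routine faithful flatness. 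Since the statement is explicitly attributed to the Stacks Project, one may alternatively just cite it; the proof sketch above records the argument for completeness.
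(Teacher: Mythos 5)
The paper gives no proof of this lemma at all---it simply cites \cite[Lemma 34.20.10]{SP}---and your argument is exactly the standard fppf-descent proof found there (reduce to $S$ and $T$ affine, descend quasi-compactness to get a finite affine cover of $X$, then descend finite generation of algebras along the faithfully flat ring map $A\to B$), so it is correct and there is nothing in the paper to compare it against. One small wording quibble: an element of $R\otimes_A B$ is a finite sum of simple tensors rather than a fraction with a ``denominator'' to clear, but your parenthetical fix is the right move---take the finitely many elements of $R$ appearing in tensor expressions for a generating set, let $R'\subseteq R$ be the $A$-subalgebra they generate, note that $R'\otimes_A B\to R\otimes_A B$ is onto so that $(R/R')\otimes_A B=0$, and conclude $R=R'$ by faithful flatness.
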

%for the proof we will need the following simple observation
%\begin{lemma}
%\begin{enumerate}
%\item  $X^{\wedge} (Y\cup Z)=X^{\wedge} Y\times  X^{\wedge}Z$
%\item  Let $Y_1\to Y$ be an etale map and $Y_2=Y_1\times_Y Y_1$ then $X^{\wedge} Y$  is the equalizer of the diagram
%\item $Y=S\sqcup \cdots \sqcup S$\\
% \item general \\
%\end{enumerate}
%\end{lemma}

\begin{proof}[Proof of Lemma \ref{lem:power.representable}]$ $
\begin{enumerate}%[Proof of 1:]
\item By faithfully flat descent, $X^{\wedge} Y$ is a sheaf in the fpqc topology and in particular in the etale topology. Thus, we need to find a scheme $A$ together with an etale cover $A \to X^{\wedge} Y$. Without loss of generality we can assume that $S$ is connected. By Lemma \ref{lem:etale.trivial} there exists an etale cover $T \to S$ such that $T\times _S Y \cong \underbrace{T \sqcup \cdots  \sqcup T}_{\text{$n$ copies}}$ as $T$-schemes. Since $T \rightarrow S$ is an etale cover, the map $X^{\wedge}_S Y \times_S T \rightarrow X^{\wedge}_S Y$ is an etale cover. Since
\[
X^{\wedge}_S Y \times_S T=(X\times_S T)^{\wedge}_T (Y \times_S T) = (X \times_S T)^{\wedge}_T(T \sqcup \cdots  \sqcup T),
\]
we have that $X^{\wedge}_S Y \times_S T$ is representable by $(X \times_S T)^n$.
\item By {\cite[I \S1 6.6]{DeGa}}  $X^{\wedge}_S Y$ is representable by a scheme which is not a-priori of finite type. Let $T \rightarrow S$ be the etale cover from the previous part and consider the Cartesian square
$$\xymatrix{ (X \times_S T)^n \ar@{->}[r]^{}  \ar@{->}[d]^{} & X^{\wedge}_S Y \ar@{->}[d]^{} \\ T \ar@{->}[r]^{}& S}$$
The horizontal arrows are etale covers, and the morphism $ (X \times_S T)^n \to T$  is of finite type. Therefore, by Lemma \ref{lem:finite.type.local},  so is the morphism   $X^{\wedge}_S Y\to S$.

%The assertion follows now from 1?? and Lemma ??.
\end{enumerate}
\end{proof}
\begin{corollary} \label{cor:hom.diagram.representable}
Let $\mathcal{C}$ be a finite category and let $D_1,D_2:\mathcal{C} \to RamiA{Sch_{/S}}$ be two functors. Let ${D_1}^{\wedge}_S D_2$ be the contravariant functor from the category of $S$-schemes to the category of sets defined by $$({D_1}^{\wedge}_S D_2)(T):=Mor(D_2\times T,D_1),$$ where $D_2\times T$  is the composition of product with $T$ and $D_2$.
Assume that the image of $D_2$ consists of schemes which are finite and etale over $S$. Then
\begin{enumerate}
\item\label{cor:hom.diagram.representable:1} ${D_1}^{\wedge}_S D_2$ is representable by an algebraic space.
\item\label{cor:hom.diagram.representable:2} If the image of $D_1$ consist of quasi-projective schemes then   ${D_1}^{\wedge}_S D_2$ is representable by a scheme.
\end{enumerate}
\end{corollary}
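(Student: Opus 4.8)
The plan is to realize ${D_1}^{\wedge}_S D_2$ as a finite iterated fiber product of the internal homs $D_1(c)^{\wedge}_S D_2(c)$, $c \in \mathrm{Ob}(\mathcal{C})$, and then to invoke Lemma \ref{lem:power.representable} together with the stability of algebraic spaces over $S$ — and, in the separated case, of schemes — under fiber products. Unwinding the definition, a natural transformation $\eta : D_2 \times T \to D_1$ is the same datum as a family $(\eta_c)_{c \in \mathrm{Ob}(\mathcal{C})}$ of $S$-morphisms $\eta_c : D_2(c) \times_S T \to D_1(c)$ satisfying $D_1(f) \circ \eta_c = \eta_{c'} \circ (D_2(f) \times \mathrm{id}_T)$ for every arrow $f : c \to c'$ of $\mathcal{C}$. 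Hence the functor $T \mapsto ({D_1}^{\wedge}_S D_2)(T)$ is the subfunctor of the finite product $P := \prod_{c} D_1(c)^{\wedge}_S D_2(c)$ carved out by these compatibility conditions, one for each of the finitely many arrows of $\mathcal{C}$.

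By Lemma \ref{lem:power.representable}, each factor $D_1(c)^{\wedge}_S D_2(c)$ is an algebraic space of finite type over $S$, and is a scheme when $D_1(c)$ is quasi-projective; hence $P$ is an algebraic space (resp.\ a scheme) of finite type over $S$. For an arrow $f : c \to c'$, postcomposition with $D_1(f)$ and precomposition with $D_2(f)$ define natural transformations $D_1(c)^{\wedge}_S D_2(c) \to D_1(c')^{\wedge}_S D_2(c)$ and $D_1(c')^{\wedge}_S D_2(c') \to D_1(c')^{\wedge}_S D_2(c)$, hence, by Yoneda, $S$-morphisms of the representing objects; composing with the projections out of $P$ we obtain two $S$-morphisms $a_f, b_f : P \to V_f$, where $V_f := D_1(c')^{\wedge}_S D_2(c)$. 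By construction ${D_1}^{\wedge}_S D_2$ is the common equalizer of the pairs $(a_f,b_f)$, that is, the iterated fiber product obtained by successively replacing the current space $W$ by $W \times_{V_f \times_S V_f} V_f$ along the diagonal of $V_f$, one step per arrow $f$. Since the category of algebraic spaces over $S$ is closed under fiber products, this proves \eqref{cor:hom.diagram.representable:1}; finite type over $S$ follows by pulling back along an étale cover as in the proof of Lemma \ref{lem:power.representable}, using Lemma \ref{lem:finite.type.local}.

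For \eqref{cor:hom.diagram.representable:2} it suffices to know that each $V_f = D_1(c')^{\wedge}_S D_2(c)$ is separated over $S$: then $\Delta_{V_f/S}$ is a closed immersion, so each step of the iterated fiber product is a closed subscheme of the scheme $P$, and the result is a scheme of finite type over $S$. Separatedness of $D_1(c')^{\wedge}_S D_2(c)$ over $S$ may be checked étale-locally on $S$; by Lemma \ref{lem:etale.trivial} there is, over each connected component, an étale cover $T \to S$ with $D_2(c) \times_S T \cong T \sqcup \cdots \sqcup T$ ($n$ copies), whence $D_1(c')^{\wedge}_S D_2(c) \times_S T \cong (D_1(c') \times_S T)^n$, which is separated over $T$ because $D_1(c')$, being quasi-projective over $S$, is separated; separatedness then descends along the fpqc cover $T \to S$.

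I expect the only genuinely delicate point to be this last separatedness verification in the scheme case: without it the diagonal of $V_f$ need not be a closed immersion and the equalizer of $a_f$ and $b_f$ need not be a scheme, so this is precisely where the quasi-projectivity hypothesis on the image of $D_1$ is consumed. Everything else is formal bookkeeping with the universal property of the internal hom and the stability of algebraic spaces (resp.\ separated schemes) under finite limits.
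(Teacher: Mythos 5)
Your proof is correct and is essentially the paper's own argument made explicit: the paper also writes ${D_1}^{\wedge}_S D_2$ as a finite limit of the representable functors $D_1(c)^{\wedge}_S D_2(c')$, transports the transition maps through Yoneda, and concludes by closure of algebraic spaces (resp.\ schemes) under finite limits, which is exactly your product-plus-equalizers decomposition. One small remark: the separatedness verification you flag as the delicate point is actually superfluous, since the equalizer of $a_f,b_f\colon P\to V_f$ is the fiber product $P\times_{V_f\times_S V_f}V_f$ along the diagonal immersion and fiber products of schemes always exist (one just gets a locally closed rather than closed subscheme of $P$), so the quasi-projectivity hypothesis is consumed only in Lemma \ref{lem:power.representable} to make each factor a scheme.
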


\begin{proof}
\Rami{We first prove \eqref{cor:hom.diagram.representable:1}.
${D_1}^{\wedge}_S D_2$ is a limit of a finite diagram of functors, each represented by an algebraic space. By the Yoneda Lemma, any morphism between such functors comes from a  morphism of algebraic spases. The asertion follows now from the fact that the category of algebraic spaces is closed under {finite} limits.
%\nir{$D_1^{\wedge}D_2$ is a limit of functors, each represented by algebraic spaces. By Yoneda, every diagram whose objects are algebraic spaces corresponds to a diagram in the category of algebraic spaces. Since the category is closed under limits bla bla bla}
%\nir{Not sure how helpful is this one: Given schemes $X,Y,Z$ which are finite over and etale over $S$ and a morphism $\phi:X \rightarrow Y$, the composition with $\phi$ defines a functor $B^{\wedge} C \rightarrow A^{\wedge} C$. By Yoneda, we get a morphism between the representing algebraic spaces. It follows that the functor $D_1^{\wedge} D_2$ is a limit of functors represented by algebraic spaces. Since the category of algebraic spaces is closed to finite limits, $D_1^{\wedge} D_2$ is representable by an algebraic space. The second statement is proved in a similar way.}
Part  \eqref{cor:hom.diagram.representable:2} is proved in a similar way.}
\end{proof}

In the rest of the section, we will not distinguish between representable functors and their representing objects.

\subsection{Improving a presentation} \label{sec:improving.pres}

\begin{notation}
Let $\phi:X \to \fX$ be a presentation of an algebraic stack defined over a scheme $S$.  Denote by $[\phi]_\bullet$ the simplicial scheme given by $[\phi]_1:=X$,  $[\phi]_n:=X \times_\fX X_{n-1}$ with the standard boundary and degeneration maps. Denote by $[\phi]_{\bullet\leq 3}$ be the full subdiagram of $[\phi]_\bullet$  with {vertices} $[\phi]_1,[\phi]_2,[\phi]_3$.
\end{notation}
Note that for two maps $\phi$ and $\phi'$ as above we have a canonical isomorphism $[\phi]_\bullet ^{[\phi']\bullet}\cong [\phi]_{\bullet\leq 3} ^{[\phi']_{\bullet\leq 3}}$.

The goal of this subsection is to prove the following:

\begin{lemma} \label{lem:improving.pres} Let $\mathfrak{X}$ be an algebraic stack defined over a scheme $S$, let $X$ be a scheme over $S$, let $\phi :X \rightarrow \mathfrak{X}$ be a presentation and let $\psi :S' \rightarrow S$ be a finite etale {and onto} map. Then the functor  $[\phi]_\bullet ^{[\psi]\bullet}$  is representeble by algebraic space and there is a presentation $f_{\phi,\psi}:[\phi]_\bullet ^{[\psi]\bullet} \rightarrow \mathfrak{X}$ such that, if $T$ is an $S$-scheme and $x:T \to \mathfrak{X}$ is such that the natural map $T\times_S S' \to \mathfrak{X}$ is  $\phi$-liftable, then $x$ is  $f_{\phi,\psi}$-liftable.

Moreover, if $\mathfrak{X}$ is an algebraic space and $X$ quasi-affine, then $[\phi]_\bullet ^{[\psi]\bullet}$ is a scheme.
\end{lemma}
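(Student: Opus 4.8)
The plan is to present $[\phi]_\bullet^{[\psi]_\bullet}$ as a space of descent data. For an $S$-scheme $T$ write $T_{S'}:=T\times_S S'$; since $[\psi]_\bullet\times_S T$ is the Čech nerve of the finite étale surjection $T_{S'}\to T$ and $[\phi]_\bullet$ is the Čech nerve of $\phi$, a $T$-point of $[\phi]_\bullet^{[\psi]_\bullet}$ — using the canonical isomorphism $[\phi]_\bullet^{[\psi]_\bullet}\cong[\phi]_{\bullet\le 3}^{[\psi]_{\bullet\le 3}}$ recorded before the lemma — unwinds to a pair $(f_1,\theta)$, where $f_1\colon T_{S'}\to X$ is the level-$1$ component and $\theta$ is a normalized descent datum on the point $\phi\circ f_1\in\fX(T_{S'})$ relative to the cover $T_{S'}\to T$: the level-$2$ data, constrained by the two face maps, furnishes the isomorphism underlying $\theta$ on $T_{S'}\times_T T_{S'}$, the level-$2$ degeneracy makes it normalized on the diagonal, and the level-$3$ relations give the cocycle condition. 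Representability of $[\phi]_\bullet^{[\psi]_\bullet}$ by an algebraic space (of finite type) then follows from Corollary~\ref{cor:hom.diagram.representable}(1): the diagram $[\psi]_{\bullet\le 3}$ consists of $S'$, $S'\times_S S'$ and $S'\times_S S'\times_S S'$, all finite étale over $S$ because $\psi$ is, and that corollary applies with $D_1=[\phi]_{\bullet\le 3}$ taking values in algebraic spaces, since its proof uses only that $Z^{\wedge}_S Y$ is an algebraic space whenever $Y\to S$ is finite étale, which holds for an arbitrary algebraic space $Z$ by the argument of Lemma~\ref{lem:power.representable}(1) (trivialize $Y$ after an étale base change of $S$).

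Since $\fX$ is a stack for the étale topology and $T_{S'}\to T$ is an étale cover, descent along it is effective, so $(f_1,\theta)$ determines a point $f_{\phi,\psi}(f_1,\theta)\in\fX(T)$ together with an isomorphism $f_{\phi,\psi}(f_1,\theta)|_{T_{S'}}\cong\phi\circ f_1$ identifying the canonical descent datum with $\theta$; this is functorial in $T$ and defines $f_{\phi,\psi}\colon[\phi]_\bullet^{[\psi]_\bullet}\to\fX$. The lifting property is then immediate: if $x\colon T\to\fX$ is such that $x|_{T_{S'}}\cong\phi\circ f_1$ for some $f_1\colon T_{S'}\to X$, then $x|_{T_{S'}}$ carries its canonical descent datum (coming from $x$ being defined on all of $T$), and transporting it along this isomorphism gives a normalized descent datum $\theta$ on $\phi\circ f_1$; hence $(f_1,\theta)$ is a $T$-point of $[\phi]_\bullet^{[\psi]_\bullet}$ with $f_{\phi,\psi}(f_1,\theta)\cong x$, i.e.\ $x$ is $f_{\phi,\psi}$-liftable.

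It remains to check that $f_{\phi,\psi}$ is a presentation, i.e.\ smooth and surjective. Surjectivity is easy: for $x\colon T\to\fX$, the base change of $\phi$ along $x$ is a smooth surjection onto $T$, so after an fppf cover $T_1\to T$ we obtain $h\colon T_1\to X$ with $\phi h\cong x|_{T_1}$, and then $f_1:=h\circ\mathrm{pr}\colon (T_1)_{S'}\to T_1\to X$ together with the canonical descent datum exhibits $x|_{T_1}$ as $f_{\phi,\psi}$-liftable. Smoothness is the main point, and here I would compute the base change of $f_{\phi,\psi}$ along the smooth surjection $\phi\colon X\to\fX$: unwinding the $2$-fibre product via the description above, a $T$-point of $[\phi]_\bullet^{[\psi]_\bullet}\times_\fX X$ is a triple $(f_1\colon T_{S'}\to X,\ h\colon T\to X,\ \beta\colon\phi f_1\xrightarrow{\sim}\phi(h|_{T_{S'}}))$, equivalently a section over $T_{S'}$ of the pullback of the second projection $X\times_\fX X\to X$; writing $X':=X\times_S S'$ with structure projection $p\colon X'\to X$, compatibility of Weil restriction with base change then yields a natural isomorphism of algebraic spaces over $X$
\[
[\phi]_\bullet^{[\psi]_\bullet}\times_\fX X\ \cong\ \mathrm{Res}_{p}\bigl((X\times_\fX X)\times_{X}X'\bigr),
\]
where $X\times_\fX X$ is taken over $X$ via its second projection and $\mathrm{Res}_p$ is Weil restriction along $p$. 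Since the second projection $X\times_\fX X\to X$ is a base change of $\phi$, it is smooth and surjective, and $p$ is finite étale (hence finite locally free); as Weil restriction along a finite locally free morphism preserves smoothness and finite type, the right-hand side is smooth of finite type over $X$, so $f_{\phi,\psi}$ is smooth (smoothness being fppf-local on the target). Getting this identification right — tracking which projection of $X\times_\fX X$ to $X$ is used, and the base-change and descent bookkeeping behind it — is the step I expect to require the most care.

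Finally, for the ``moreover'' clause, assume $\fX$ is an algebraic space and $X$ is quasi-affine. Then $X\times_\fX X\to X\times_S X$ is a monomorphism of finite type, hence quasi-finite and separated, and since $X\times_S X$ is quasi-affine over $S$, Zariski's Main Theorem shows that $X\times_\fX X$, and likewise $X\times_\fX X\times_\fX X$, is quasi-affine over $S$, hence a quasi-projective $S$-scheme (as $S$ is Noetherian). Thus $[\phi]_{\bullet\le 3}$ takes values in quasi-projective schemes, and Corollary~\ref{cor:hom.diagram.representable}(2) gives that $[\phi]_\bullet^{[\psi]_\bullet}\cong[\phi]_{\bullet\le 3}^{[\psi]_{\bullet\le 3}}$ is a scheme.
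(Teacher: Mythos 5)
Your proof is correct, and its overall skeleton --- representability via Corollary~\ref{cor:hom.diagram.representable} applied to $[\phi]_{\bullet\le 3}^{[\psi]_{\bullet\le 3}}$, the definition of $f_{\phi,\psi}$ by effective descent along $T\times_S S'\to T$, the liftability claim being immediate from that definition, and quasi-affineness of the $[\phi]_i$ feeding into part (2) of that corollary for the scheme case --- matches the paper's. Where you genuinely diverge is the proof that $f_{\phi,\psi}$ is smooth and surjective. The paper trivializes $\psi$ by an etale base change $T\to S$ (Lemma~\ref{lem:etale.trivial}), descends the presentation property along $T\to S$ via Lemma~\ref{lem:base.change.presentation}, and in the split case $S'=S\sqcup\cdots\sqcup S$ identifies $[\phi]_\bullet^{[\psi]_\bullet}$ globally as $X\times_X[\phi]_2\times_X\cdots\times_X[\phi]_2$, from which smoothness and surjectivity are read off at once. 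You instead pull back along $\phi:X\to\fX$, identify $[\phi]_\bullet^{[\psi]_\bullet}\times_\fX X$ with the Weil restriction along $X'=X\times_S S'\to X$ of $X\times_\fX X$ (taken over $X$ via the second projection), and invoke preservation of smoothness and finite type under Weil restriction along finite locally free maps, handling surjectivity separately by fppf-local liftability; I checked the $T$-point identification and it does hold, with the descent-datum bookkeeping working out exactly as you describe. Both arguments bottom out in the same splitting trick --- the smoothness of the Weil restriction is itself verified after an etale cover of $X$ splitting $X'$, or by the infinitesimal criterion --- but yours buys a cleaner structural statement (the pullback of the improved presentation along the original one is a Weil restriction of $X\times_\fX X$) at the cost of the fibre-product care you flag, while the paper's buys an explicit global model of $[\phi]_\bullet^{[\psi]_\bullet}$ in the split case and treats smoothness and surjectivity in one stroke. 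One point where you are actually more careful than the paper: Corollary~\ref{cor:hom.diagram.representable} is stated for diagrams of schemes, whereas $[\phi]_2$ and $[\phi]_3$ are in general only algebraic spaces, and your remark that Lemma~\ref{lem:power.representable}(1), and hence the corollary, extends to algebraic-space values is both needed and correct.
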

In order to build $f_{\phi,\psi}$  we need to discuss the notion of descent data.

\begin{definition}
Let $\fX$ be an algebraic stack defined over a scheme $S$. Let $\psi:Y'\to Y$ be an etale map of $S$-schemes.
A {descent datum for $\fX$ with respect to $\psi$ (or a $\psi$-descent datum for $\mathfrak{X}$)} is a map $s:Y' \to \fX$ and an isomorphism $F$ between $s\circ d_1$ and $s\circ d_2$ (where $d_i:[\psi]_2\to [\psi]_1$ are the boundary maps) satisfying the cocycle condition $(F\circ d_{12})(F \circ d_{23})=F\circ d_{13}$
%\nir{now I'm not sure it's useful}
(see \cite[8.3, 8.4]{SP}).
\end{definition}
The collection of all $\psi$-descent data forms a groupoid. We have a natural functor from $\fX(Y)$ to this groupoid. $\mathfrak{X}$ being a stack implies that this functor is an equivalence.

\begin{notation}
Let $ \fX$ be an algebraic stack defined over a scheme $S$. Let $\psi:S'\to S$ be a finite etale map.
Define a {functor} $\fX_{S'/S}$ {from $S$-schemes to groupoids} by  $$\fX_{S'/S}(Y)=\{\text{ descent data for $\fX$  with respect to $Y\times_S S' \to Y$ } \}.$$

%Denote that natural map $\fX \to \fX_{S'/S}$ by $Des_{S'/S}$. Since this map is equvivalece of stacks we can chose an inverse of this map and denote it  %$Des^{-1}_{S'/S}$. Thow this chois is not canonical, however the statements that follws will depend on it.
\end{notation}
By the discussion above, there is a natural equivalence of functors $\fX \to \fX_{S'/S}$. In particular, $\mathfrak{X}_{S'/S}$ is a stack naturally identified with $\fX$, for any such $S'$.

\begin{definition} Let $\phi:X \to \mathfrak{X}$ be a presentation of an algebraic stack defined over a scheme $S$. Let $\eta:Y'\to Y$ be an etale map of $S$-schemes. An explicit descent datum for a map from $Y$ to $\mathfrak{X}$ with respect to $\phi$ and $\eta$ is a
morphism of diagrams $[\eta]_{\bullet \leq 3}\to [\phi]_{\bullet \leq 3}$.
%triplet of morphisms $\nu_i:[\eta]_i\to [\phi]_i$, $i=1,2,3$, that commute with the  boundary and degeneration maps.
\end{definition}
Any {explicit} {descent} datum for a map from $Y$ to $\mathfrak{X}$ with respect to $\phi$ and $\eta$  gives a {descent} datum for  a map from $Y$ to $\mathfrak{X}$ with respect to $\phi$.
We are now ready to define $f_{\phi,\psi}$.
\begin{definition}
Let $\phi:X \to \mathfrak{X}$ be a presentation of an algebraic stack defined over a scheme $S$ and let $\psi :S' \rightarrow S$ be a finite etale {and onto} map. In view of the discussion above we obtain a natural map $[\phi]_{\bullet \leq 3}^{[\psi]_{\bullet \leq 3}} \to \mathfrak X_{S'/S}$. %\cong \mathfrak X$ .
This gives us the map $f_{\phi,\psi} :[\phi]_{\bullet}^{[\psi]_{\bullet}} \to \mathfrak X$
\end{definition}

%In order to construct the algebraic space $[\phi]_\bullet ^{[\psi]\bullet}$ in Lemma \ref{lem:improving.pres}, we will use the following:

In order to prove that $f_{\phi,\psi}$ is a presentation, we will use the following

\begin{lemma} \label{lem:base.change.presentation}
Let $\phi:X \to Y$ be a morphism of $S$-schemes and $T$ be an $S$-scheme.
\begin{enumerate}
\item If $\phi \times_S T :X\times_S T  \to Y\times_S T$  and $T \to S$  are surjective morphisms, then so is $\phi$
\item If $\phi \times_S T :X\times_S T  \to Y\times_S T$  is smooth and $T \to S$  is surjective and smooth, then  $\phi$ is smooth.
\item Suppose that $\mathfrak{X}$ is a stack over $S$, $\psi : X \rightarrow \mathfrak{X}$ is an $S$-morphism, and $T \rightarrow S$ is a surjective and smooth morphism. If $\psi \times_S T$ is a presentation, then so is $\psi$.
\end{enumerate}
\end{lemma}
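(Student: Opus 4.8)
The plan is to read all three statements as descent along the morphism $Y\times_S T\to Y$ (and its analogues $\fX\times_S T\to\fX$, $U\times_S T\to U$), which is a base change of $T\to S$ and is therefore surjective and smooth, in particular faithfully flat and locally of finite presentation, i.e.\ an fppf cover. For \textbf{part (1)} I would argue with points: given $y\in Y$ with image $s\in S$, surjectivity of $T\to S$ gives a point $t\in T$ over $s$, and since the fibre of $Y\times_S T\to Y$ over $y$ is $\Spec k(y)\times_{\Spec k(s)}T_s$ and $\Spec(k(y)\otimes_{k(s)}k(t))$ is non-empty (a tensor product of non-zero algebras over a field is non-zero), this fibre is non-empty; picking $\tilde y$ over $y$ and then, by surjectivity of $\phi\times_S T$, a point of $X\times_S T$ over $\tilde y$, its image in $X$ lies over $y$. (Alternatively, surjectivity is fpqc-local on the base.) For \textbf{part (2)}, under the canonical identification $X\times_Y(Y\times_S T)=X\times_S T$ the projection to $Y\times_S T$ becomes $\phi\times_S T$, so since $Y\times_S T\to Y$ is a smooth surjective (hence fppf) cover and $\phi\times_S T$ is smooth, the fact that smoothness is local on the base in the fppf topology (\cite{SP}) gives that $\phi$ is smooth.

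For \textbf{part (3)}, recall that for an algebraic stack every morphism from a scheme is representable by algebraic spaces, so $\psi$ and $\psi\times_S T$ are automatically representable and ``being a presentation'' means exactly ``smooth and surjective'' (the latter for $\psi\times_S T$ with respect to the algebraic stack $\fX\times_S T$). To check these properties of $\psi$ it suffices to show that for every scheme $U$ with a morphism $U\to\fX$ the morphism of algebraic spaces $X\times_\fX U\to U$ is smooth and surjective. Writing $U_T:=U\times_S T$, there is a canonical identification $(X\times_\fX U)\times_S T=(X\times_S T)\times_{\fX\times_S T}U_T$ (obtained from $X\times_S T=X\times_\fX(\fX\times_S T)$ by cancelling $\fX\times_S T$ and then using $X\times_\fX U_T=(X\times_\fX U)\times_S T$), and under it the projection to $U_T$ is a base change of $\psi\times_S T$, hence smooth and surjective. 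Since $U_T\to U$ is a smooth surjective cover, applying parts (1) and (2) in their algebraic-space versions — which follow from the same descent arguments, or directly from the cited fppf-descent results, which hold for algebraic spaces as well — yields that $X\times_\fX U\to U$ is smooth and surjective, as required.

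I expect none of the three parts to be deep; the only point needing care is the bookkeeping in part (3): verifying the identification of fibre products $(X\times_\fX U)\times_S T\cong(X\times_S T)\times_{\fX\times_S T}U_T$ and making sure that the descent statements for smoothness and surjectivity are invoked in the form valid for morphisms of algebraic spaces, not merely of schemes. Everything else is immediate once one observes that all the vertical covers in play are base changes of $T\to S$ and hence fppf.
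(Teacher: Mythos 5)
Your proof is correct and follows essentially the same descent strategy as the paper: part (1) is the same point-set argument, and part (3) fills in the details that the paper dismisses with ``follows from the previous claims,'' correctly flagging that the scheme statements must be applied in their algebraic-space form since $X\times_\fX U$ is in general only an algebraic space. The only (cosmetic) difference is in part (2), where you descend smoothness along the fppf cover $Y\times_S T\to Y$ of the base, while the paper instead precomposes with the smooth surjective projection $X\times_S T\to X$ and cancels it; the two are standard and interchangeable.
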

\begin{proof}
$ $
\begin{enumerate}
\item We denote the underlying topological space of a scheme $A$ by $|A|$. By definition, a map $A\to B$ is surjective iff $|A|\to |B|$ is surjective.

Consider the commutative diagram
$$\xymatrix{X\times_S T \ar@{->}[r]^{\phi \times_S T}  \ar@{->}[d]^{pr_X} & Y\times_S T \ar@{->}[d]^{pr_Y} \\ X \ar@{->}[r]^{\phi}& Y}.$$
It gives rise to a commutative diagram

%|X|\times_{|S|} |T|   \ar@{->}[d]^{}
$$\xymatrix{ & |Y|\times_{|S|} |T| \ar@{->}[d]^{} \ar@/^4pc/[dd]^{pr_{|Y|}}\\
|X\times_S T| \ar@{->}[r]^{|\phi \times_S T|}  \ar@{->}[d]^{|pr_X|} & |Y\times_S T| \ar@{->}[d]^{|pr_Y|} \\ |X| \ar@{->}[r]^{|\phi|}& |Y|}$$
Since the map $T\to S $ is surjective, so is $|T|\to|S|$ and thus so is $pr_{|Y|}$. This implies that $|pr_{Y}|$ is surjective. Together with the fact that $|\phi \times_S T|$ is surjective this implies that $|\phi|\circ |pr_X| $ is surjective, which implies the assertion.

\item As before $pr_X$ is surjective. Also, since $T\to S$  is smooth so is $pr_X$. Thus by \cite[Lemma 34.11.4]{SP} it is left to show that $\phi \circ pr_X$ is smooth. This follows from the fact that $\phi \times_S T$ and $pr_Y$ are smooth.

\item This follows from the previous claims.
\end{enumerate}

\end{proof}
\begin{corollary}
Let $\phi:X \to \fX$ be a morphism of an $S$-scheme to an $S$-stack and let $T\to S$ be a surjective smooth morphism of schemes. Assume that $\phi \times_S T :X\times_S T  \to \fX\times_S T$  is a presentation. then so is $\phi$.
\end{corollary}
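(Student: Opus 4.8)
The plan is to recognize this corollary as essentially a restatement of part~(3) of Lemma~\ref{lem:base.change.presentation} (with $\psi$ renamed to $\phi$), and to spell out why that part holds. Recall that in this paper a presentation of a stack is a smooth surjective morphism whose source is a scheme; representability by algebraic spaces is automatic once the source is a scheme, so the only content is smoothness and surjectivity. Since $X\times_S T$ is again a scheme, the hypothesis that $\phi\times_S T$ is a presentation says precisely that $\phi\times_S T\colon X\times_S T\to\fX\times_S T$ is smooth and surjective, and the goal is to deduce the same two properties for $\phi\colon X\to\fX$.

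Both properties are checked after base change along an arbitrary morphism $U\to\fX$ from a scheme: surjectivity (resp.\ smoothness) of $\phi$ means surjectivity (resp.\ smoothness) of $X\times_\fX U\to U$ for every such $U$. Since $X\times_\fX U$ is only an algebraic space, I would first choose a scheme together with an etale surjection $V\to X\times_\fX U$; surjectivity of $V\to U$ implies that of $X\times_\fX U\to U$, and smoothness of $V\to U$ is equivalent to that of $X\times_\fX U\to U$ because smoothness is smooth-local on the source. Now base change to $T$: using the canonical identification $V\times_S T\cong V\times_{X\times_\fX U}\bigl((X\times_S T)\times_{\fX\times_S T}(U\times_S T)\bigr)$ together with the fact that $(X\times_S T)\times_{\fX\times_S T}(U\times_S T)\to U\times_S T$ is the pullback of $\phi\times_S T$, we see that $V\times_S T\to U\times_S T$ is smooth and surjective. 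Since $U\times_S T\to U$ is the base change of $T\to S$, it is again smooth and surjective, so parts~(1) and~(2) of Lemma~\ref{lem:base.change.presentation}, applied to the morphism of $U$-schemes $V\to U$ with $U\times_S T\to U$ playing the role of $T\to S$, yield surjectivity and smoothness of $V\to U$, hence of $\phi$.

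I do not expect a real obstacle here: the statement is formally equivalent to Lemma~\ref{lem:base.change.presentation}(3), and one could even avoid the etale cover altogether by noting that parts~(1) and~(2) of that lemma go through verbatim for morphisms from algebraic spaces (their proofs only use underlying topological spaces and cite results valid in that generality). The only thing that requires a line of care is the bookkeeping of the fibre-product identities above under base change to $T$; once these are recorded, the corollary follows at once.
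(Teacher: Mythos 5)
Your proposal is correct and follows essentially the same route as the paper: the corollary is stated there as an immediate consequence of Lemma~\ref{lem:base.change.presentation}(3), whose proof the paper compresses to ``this follows from the previous claims,'' i.e.\ exactly the reduction to parts~(1) and~(2) that you carry out. Your write-up merely makes explicit the base change along a scheme point $U\to\fX$, the passage to an etale scheme cover $V\to X\times_\fX U$, and the fibre-product identification $V\times_S T\cong V\times_U(U\times_S T)$, all of which are the details the paper leaves implicit.
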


%\nir{add the construction of $X_{\phi,\psi}$ as a definition}.

We will also need the following:

\begin{lemma} \label{lem:1.to.1.implies.quasi.affine}
Let $\phi:X \to Y$ be a morphism of schemes such that for any scheme $T$, the map $\phi(T):X(T) \to Y(T)$ is 1-1. Then $\phi$  is quasi-affine.
\end{lemma}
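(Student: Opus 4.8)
The plan is to show that $\phi$ is separated and quasi-compact, and then that it is a monomorphism, and finally invoke the standard characterization: a quasi-compact monomorphism of schemes is quasi-affine (in fact this is Zariski's main theorem in the form \cite[Tag 01WM]{SP}, or one combines ``monomorphism and locally of finite type $\Rightarrow$ unramified'' with properness-type arguments; the cleanest route is that a quasi-compact monomorphism is separated, quasi-finite, and affine onto a locally closed subscheme of $Y$ by \cite[Tag 04XV]{SP}). First I would observe that the hypothesis, read through the Yoneda lemma applied to $T$-points for all $T$, says precisely that $\phi$ is a monomorphism in the category of schemes: $X(T)\to Y(T)$ injective for all $T$ is the definition of monomorphism. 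In particular $\phi$ is separated, since the diagonal $X\to X\times_Y X$ is an isomorphism (a monomorphism is by definition one for which the diagonal is an isomorphism), hence a closed immersion.

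The subtlety is that a monomorphism of schemes need not be quasi-compact in general, so ``quasi-affine'' (which for us should include quasi-compactness, being a finite-type hypothesis in context) does not follow without using that $X$ and $Y$ are of finite type over the Noetherian base $S$ — a standing assumption in the paper. Under that assumption $X$ is Noetherian, hence quasi-compact, hence $\phi$ is quasi-compact automatically. So next I would record: $\phi$ is a quasi-compact monomorphism of (Noetherian) schemes. Then I would apply \cite[Tag 04XV]{SP} (or \cite[Tag 03BB]{SP}): a quasi-compact monomorphism is representable by a locally closed immersion followed by... more precisely, a universally injective, unramified, quasi-compact morphism that is a monomorphism factors as an open immersion into a closed subscheme — equivalently it is an immersion when it is also of finite type, and an immersion into a scheme is quasi-affine provided the target is, or at least the source is quasi-affine over $Y$. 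Since an immersion (locally closed) $X\hookrightarrow Y$ exhibits $X$ as quasi-affine over $Y$ — it is the composition of a closed immersion (affine) and an open immersion (quasi-affine) — we conclude $\phi$ is quasi-affine.

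I expect the main obstacle to be pinning down the exact Stacks Project lemma that upgrades ``quasi-compact monomorphism'' to ``immersion'' (this needs finite type, which we have) versus merely ``quasi-affine'': the honest statement is \cite[Tag 01WN]{SP}, which says a quasi-compact monomorphism of schemes which is locally of finite type is an immersion — and then an immersion is quasi-affine. (Without the finite-type hypothesis one only gets quasi-affine directly via \cite[Tag 04XV]{SP}, which would in fact suffice and is perhaps the more robust citation to use, since it does not even require Noetherianity once one has quasi-compactness in hand.) So the write-up would be: (1) the hypothesis says $\phi$ is a monomorphism; (2) $X$ of finite type over Noetherian $S$ is quasi-compact, so $\phi$ is quasi-compact; (3) a quasi-compact monomorphism of schemes is quasi-affine by \cite[Tag 04XV]{SP}. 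Three lines, with the only real content being the translation in step (1) and the correct reference in step (3).
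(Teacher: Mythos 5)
Your proposal follows essentially the same route as the paper: both reduce the statement to the fact that a separated, quasi-finite morphism of finite type is quasi-affine (a consequence of Zariski's Main Theorem). Your reformulation of the hypothesis as ``$\phi$ is a monomorphism'' is exactly the paper's observation that the square of diagonals is cartesian, and your appeal to the standing finite-type-over-Noetherian assumption to supply quasi-compactness is the right way to handle that point.

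Two corrections are needed, though. First, the parenthetical claim that a quasi-compact monomorphism which is locally of finite type is an immersion is false and should be deleted: if $C$ is a nodal curve with normalization $\nu:\widetilde C\to C$ and $U\subset\widetilde C$ is the complement of one of the two preimages of the node, then $\nu|_U:U\to C$ is a finite-type, separated, universally injective, unramified morphism, hence a quasi-compact monomorphism; but it is not an immersion (it is a surjective homeomorphism which is not a closed immersion, since $\mathcal{O}_{C,\mathrm{node}}\to\mathcal{O}_{\widetilde C,p}$ is not surjective). Second, your final three-line version rests entirely on the citation ``a quasi-compact monomorphism of schemes is quasi-affine,'' which is not a statement you can safely quote in that generality, and I cannot match it to the tag you name. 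The reliable chain --- and the one the paper in effect uses --- is: a monomorphism has an isomorphism for its diagonal, hence is separated and (being locally of finite type) unramified, hence locally quasi-finite; together with quasi-compactness this makes $\phi$ separated and quasi-finite, and then the ZMT consequence cited in the paper as \cite[Lemma 36.38.2]{SP} gives quasi-affineness. With that substitution in your step (3), your argument is correct and coincides with the paper's.
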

\begin{proof}
We may assume $Y$ is separated. {The assumption implies that the diagram
\[
\xymatrix{\Delta X \ar@{->}[r] \ar@{->}[d] & X \times X \ar@{->}[d] \\ \Delta Y \ar@{->}[r] & Y \times Y}
\]
is cartesian, and, therefore, $$\Delta(X)=(\phi\times\phi)^{-1}(\Delta Y).$$ This implies that $X$ is separated. The assumption also implies that $\phi$ is quasi finite. Thus by \cite[Lemma 36.38.2]{SP}, $X$ is quasi-affine.}
\end{proof}

%\nir{Isn't it clear that if $X,Y$ are quasi-affine (over $S$), then so is $X \times_S Y$?}

\begin{corollary} \label{cor:quasi.affine.presentation}
Let $\phi:X \to Y$ be a presentation of an algebraic space by a quasi-affine scheme. Then $[\phi]_i$ are quasi-affine.
\end{corollary}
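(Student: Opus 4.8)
The plan is to reduce the claim to Lemma \ref{lem:1.to.1.implies.quasi.affine} by exhibiting, for each $i$, a monomorphism from $[\phi]_i$ into an explicitly quasi-affine scheme.

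First I would record two preliminary facts. \emph{(a)} Each $[\phi]_i$ is in fact a scheme: since the target $Y$ of $\phi$ is an algebraic space, its diagonal is representable by schemes, so for any scheme $Z$ over $Y$ the projection $X\times_Y Z\to X\times Z$ is a base change of $\Delta_Y$ and hence representable; inducting on $i$ via $[\phi]_i=X\times_Y[\phi]_{i-1}$ then shows $[\phi]_i$ is a scheme. \emph{(b)} If $A:=\Gamma(X,\mathcal O_X)$, then quasi-affineness of $X$ means that $X\hookrightarrow\Spec A$ is a quasi-compact open immersion, whence the $i$-fold product $X^i:=X\times_{\Spec\bZ}\cdots\times_{\Spec\bZ}X$ is a quasi-compact open subscheme of $\Spec(A^{\otimes i})$; in particular $X^i$ is a quasi-affine scheme, so its structure morphism $X^i\to\Spec\bZ$ is quasi-affine.

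Next I would consider the morphism $p_i\colon[\phi]_i\to X^i$ assembled from the $i$ face projections $[\phi]_i\to[\phi]_1=X$. Because $Y$ is a sheaf (algebraic spaces being sheaves on the big étale site), for every scheme $T$ the set $[\phi]_i(T)$ is precisely the subset of $X(T)^i=X^i(T)$ consisting of the tuples $(x_1,\dots,x_i)$ whose images $\phi\circ x_j$ all coincide in $Y(T)$; hence $p_i(T)$ is injective for every $T$. By Lemma \ref{lem:1.to.1.implies.quasi.affine} this forces $p_i$ to be quasi-affine. Composing the quasi-affine morphism $p_i$ with the quasi-affine morphism $X^i\to\Spec\bZ$ shows that $[\phi]_i\to\Spec\bZ$ is quasi-affine, i.e.\ $[\phi]_i$ is a quasi-affine scheme.

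The only genuinely delicate point is fact \emph{(a)}: that the fiber powers $[\phi]_i$ are schemes and not merely algebraic spaces. This is exactly where the hypothesis that $Y$ is an algebraic space (hence with representable diagonal), rather than an arbitrary stack, is used — for a stack, $[\phi]_i(T)$ would carry isomorphism data and the map to $X^i$ need not be a monomorphism. Everything else is a routine combination of the cited lemma with standard permanence properties of quasi-affine morphisms (stability under composition, and the fact that a scheme is quasi-affine iff its structure morphism to $\Spec\bZ$ is quasi-affine). One could equally run the argument with $X^i$ formed relative to $S$, since $X^i_{/S}\to X^i_{/\bZ}$ is again a monomorphism, but working over $\Spec\bZ$ makes the target manifestly quasi-affine with no separatedness hypothesis on $S$.
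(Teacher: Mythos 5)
Your proof is correct and uses the same key idea as the paper: since $Y$ is a sheaf, the projection from a fiber power of $X$ over $Y$ to a plain power of $X$ is injective on $T$-points, so Lemma \ref{lem:1.to.1.implies.quasi.affine} applies and quasi-affineness follows from that of the power of the quasi-affine scheme $X$. The only (harmless) deviation is that you treat every $[\phi]_i\hookrightarrow X^i$ directly over $\Spec\bZ$, while the paper applies the lemma just to $[\phi]_2\to X\times X$ and then inducts via $[\phi]_i=[\phi]_{i-1}\times_X[\phi]_2$; your explicit remark that each $[\phi]_i$ is a scheme is a point the paper leaves implicit.
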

\begin{proof}
The two restriction maps give a morphism $[\phi]_2\to [\phi]_1\times [\phi]_1=X\times X$. This morphism satisfies the conditions  of Lemma \ref{lem:1.to.1.implies.quasi.affine} and, thus, it is quasi-affine.  This implies that $[\phi]_2$ is quasi-affine. Since, for $i>2$, we have $[\phi]_i=[\phi]_{i-1}\times_{X} [\phi]_{2}$, we obtain by induction that $[\phi]_i$  is  also quasi-affine.
\end{proof}

\begin{proof}[Proof of Lemma \ref{lem:improving.pres}]
Since $[\phi]_\bullet ^{[\psi]\bullet}\cong [\phi]_{\bullet\leq 3} ^{[\psi]_{\bullet\leq 3}}$, Corollary \ref{cor:hom.diagram.representable} implies that $[\phi]_\bullet ^{[\psi]\bullet}$ is representeble by an algebraic space.
It follows from the definitions that, if $T$ is an $S$-scheme and $x:T \to \mathfrak{X}$ is such that the natural map $T\times_S S' \to \mathfrak{X}$ is  $\phi$-liftable, then $x$ is  $f_{\phi,\psi}$-liftable.

 % Without loss of generality, we can assume that $X$ is affine.
 %For every $S$-scheme $Y$, let $\psi_Y:Y \times S' \rightarrow Y\times S=Y$ be the base change of $\psi$. By Corollary \ref{cor:hom.diagram.representable}, the functor from $S$-schemes to sets defined by
 %\[
 %Y \mapsto \left\{ \text{explicit descent data for $\mathfrak{X}$ with respect to $\phi$ and $\psi_Y$} \right\}
 %\]
 %\nir{write this functor as a power of diagrams} is represented by an algebraic space, which we denote $X_{\phi,\psi}$. Note that, by Corollary \ref{cor:quasi.affine.presentation},
If $\mathfrak{X}$ is an algebraic space and $X$ is quasi-affine, then $\phi_i$ are quasi-affine, and, by Corollary \ref{cor:hom.diagram.representable}, $[\phi]_\bullet ^{[\psi]\bullet}$ is a scheme of finite type.

It remains to prove that $f_{\phi,\psi}$ is a presentation.
\begin{enumerate}[{Case} 1:]
\item $S'=S \sqcup \cdots \sqcup S$ and $\psi$ is the projection.\\
In this case it is easy to see that $[\phi]_\bullet ^{[\psi]_\bullet} \cong X \times_X [\phi]_2 \times_X \cdots \times_X [\phi]_2$ where the maps $[\phi]_2 \to X$ are the first boundary maps, and the number of appearances of $ \times_Y$ and $\sqcup$ is the same.
It is also easy to  see that under this identification the map $f_{\phi,\psi}$ is  the composition of the projection to $X$ and the $\phi$. This proves the assertion.
\item The general case\\
%Given an $S$-scheme $Y$ and an explicit descent datum with respect to $\phi$ and $\psi_Y$, composition with $\phi$ gives rise to a $\psi$-descent datum for $\mathfrak{X}$. This defines a functor and hence a morphism $f_{\phi,\psi}:X_{\phi,\psi} \rightarrow \mathfrak{X}_{S'/S}$.
%By \cite[88.10.11]{SP} $\phi_{S'/S}$ is representable.
 By Lemma \ref{lem:etale.trivial} there exists an etale cover  $\eta: T\to S$ such that $T\times _S S' \cong T \sqcup \cdots  \sqcup T$  as an $S'$  scheme.
By Lemma \ref{lem:base.change.presentation} it is  enough to show that $f_{\phi,\psi} \times_S T:[\phi]_\bullet ^{[\psi]_\bullet}\times_S T \to \fX \times_S T$ is a presentation. Equvivalently we have to show that
$$f_{\phi\times T,\psi\times T} :({[\phi \times T]_\bullet})\,^\wedge_{T}\, ([\psi\times T]_\bullet) \to \fX \times T$$ is a presentation.
This follows from the previous case.
%By definition $ \phi_{S'/S} \times_S T \cong  (\phi\times_S T) _{T \times_S S'/T} $
%Denote $Y:=X \times_S T, \fY:=\fX\times_S T$ and $\eta:=\phi\times_S T: Y\to \fY$.
%We know that $\psi$ is a presentation and  we have to show that $\psi_{T \sqcup \cdots \sqcup T /T}$ is a presentation.
% It is easy to  see that $Y_{T \sqcup \cdots \sqcup T /T, \psi}\cong Y \times_Y [\psi]_2 \times_Y \cdots \times_Y [\psi]_2$ where the maps $[\psi]_2 \to Y$ is the first restriction map, and the number of appearances of $ \times_Y$ and $\sqcup$ is the same. It is also easy to  see that under this identification the map $\psi_{T \sqcup \cdots \sqcup T /T}$ is  the composition of the projection to $Y$ and the $\psi$. This proves the assertion.
\end{enumerate}

\end{proof}

\subsection{Proof of Proposition \ref{lem:alm.onto.imp.onto}} \label{sec:prf.alm.onto}

\begin{notation} Let $n$ be a positive integer. Let $\mathbb{U}_n \subset \mathbb{A} ^1 \sqcup \cdots \sqcup \mathbb{A} ^{n}$ be the $\mathbb{Z}$-scheme of separable monic polynomials of degree at most $n$ and let  $\mathbb{U}_n' = \left\{ (f,a)\in \mathbb{U} \times \mathbb{A}^1 \mid f(a)=0 \right\}$. Note that there is an obvious finite etale and onto map $\mathbb{U}_n' \rightarrow \mathbb{U}_n$.
\end{notation}

\begin{proof}[Proof of Proposition  \ref{lem:alm.onto.imp.onto}] Let $X \to \fX$ be an $(n,\cS)$-almost onto presentation of a stack by  a scheme. Without loss of generality, we may assume that $X$ is affine.

Let $S_n=S \times_{\Spec \mathbb{Z}} \mathbb{U}_n$, $S_n'=S\times_{\Spec \mathbb{Z}} \mathbb{U}_n'$, $\mathfrak{X}_n:=\mathfrak{X} \times_{\Spec \mathbb{Z}} \mathbb{U}_n$, and $X_n:=X \times_{\Spec \mathbb{Z}} \mathbb{U}_n$. Applying Lemma \ref{lem:improving.pres} to $(S_n,S_n',\mathfrak{X}_n,X_n)$ instead of $(S,S',\mathfrak{X},X)$, we get a presentation
$$({[\phi_n]_\bullet})\,^\wedge_{S_n}\, ([\psi_n]_\bullet)
%X_{\phi_n,\psi_n}
\rightarrow \mathfrak{X}_n$$
 of $S_n$-stacks. The composition $$({[\phi_n]_\bullet})\,^\wedge_{S_n}\, ([\psi_n]_\bullet)\rightarrow \mathfrak{X}_n \rightarrow \mathfrak{X}$$ is an $\mathcal{S}$-onto presentation by an algebraic space. This proves part \ref{lem:alm.onto.imp.onto:1}. Since $\mathbb{U}_n$ is quasi-affine so is $X_n$. Thus if  $\mathfrak{X}$ is an algebraic space then $({[\phi_n]_\bullet})\,^\wedge_{S_n}\, ([\psi_n]_\bullet)$ is a scheme. This proves part \ref{lem:alm.onto.imp.onto:2}.

%Let $\mathfrak{X}':=\mathfrak{X}\times_{\Spec \mathbb{Z}} \mathbb{U}_n$, $X':=X\times_{\Spec \mathbb{Z}} \mathbb{U}_n$ and $\phi':= \phi \times_{\Spec \mathbb{Z}} \Id_{\mathbb{U}_n}:X'\to \mathfrak{X}'$. The map $\phi'$ is a presentation of a $\mathbb{U}_n \times S$-stack. Consider the composition $X'_{\mathbb{U}'_n/\mathbb{U}_n,\phi'}\to \fX' \to \fX$. By Lemma \ref{lem:main.const} (\ref{lem:main.const:1},\ref{lem:main.const:3},\ref{lem:main.const:4}) it is an $\cS$-onto presentation by an algebraic space. This proves Part (1??).  If $\fX$ is an algebraic space then  By Lemma \ref{lem:main.const}(\ref{lem:main.const:2}) and ?? $X_{\mathbb{U}'_n/\mathbb{U}_n,\phi'}$ is a scheme. This proves (2).

\end{proof}

\appendix
\section{Group schemes and their classifying spaces} \label{sec:group.schemes}
%Here we recall the standard tactic to pass from the generic point to an open dense set.?? maybe to rephrase
In this appendix we will {deduce some statements about group algebraic spaces from the corresponding statements for algebraic groups. The statements are about existence of some stratification of the base, so the transition between algebraic groups and group algebraic spaces is standard. We included it for completeness, since we could not find them in the literature for the generality of group schemes.}
\subsection{The generic point}
\begin{notation}For any scheme $X$ denote by $Alg(X)$ the category of algebraic spaces of finite type over $U$. We consider the assignment $X \mapsto Alg(X)$ as a contravariant 2-functor to the 2-category of categories.
\end{notation}

\begin{proposition} \label{prop:gen.pt}
{Assume that $S$ is irreducible and reduced}, and let $\eta$ be its generic point.
Then the natural functor $$\lim_{\underset{U \subset S}{\longrightarrow}}Alg(U) \to Alg(\eta)$$ is an equivalence of categories.
\end{proposition}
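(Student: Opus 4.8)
The statement is a standard ``spreading out'' / limit result: objects and morphisms of finite type over the generic point $\eta = \Spec k(\eta)$ are defined over some open neighborhood, and two such data agreeing at $\eta$ already agree on a smaller open. The plan is to verify the three conditions for an equivalence of (filtered) colimit categories: essential surjectivity, fullness, and faithfulness. Throughout I would use that $k(\eta) = \varinjlim_{U \subset S} \mathcal{O}_S(U)$ (as $S$ is irreducible and reduced, so that the local ring at $\eta$ is the function field), and the general principle that algebraic spaces of finite presentation, together with morphisms between them, descend along such filtered limits of rings. Concretely I would invoke the limit formalism for algebraic spaces, e.g. \cite[Lemma 07SK and surrounding tags]{SP} (limits of algebraic spaces), which packages exactly the ``finite type objects and morphisms spread out, and become equal, over an open'' statements.

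\textbf{Faithfulness and fullness.} Let $U \subset S$ be open and let $A, B \in Alg(U)$. Given two morphisms $f, g : A \to B$ over $U$ whose restrictions to $\eta$ coincide: since $A$ is of finite type (hence finite presentation, as $S$ is Noetherian) over the Noetherian base, the locus where $f$ and $g$ agree is represented by a locally closed — in fact, if $B \to U$ is separated, closed — condition; more robustly, one reduces to an affine situation and uses that a morphism of finite-type algebras over $k(\eta) = \varinjlim \mathcal{O}_S(V)$ comes from one over some $V$, and an equality of two such over $k(\eta)$ holds already over some smaller $V \subset U$. This gives $f|_V = g|_V$, i.e. the colimit functor is faithful. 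For fullness, a morphism $A_\eta \to B_\eta$ over $\eta$: writing $A, B$ as (finitely presented) schemes/algebraic spaces over $U$, the morphism is cut out by finitely many polynomial identities over $k(\eta)$, each of which involves only finitely many denominators, so it descends to a morphism $A|_V \to B|_V$ over some open $V \ni \eta$; this is a preimage of the given $\eta$-morphism in $\varinjlim_U \operatorname{Hom}_U(A,B)$.

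\textbf{Essential surjectivity.} Given an algebraic space $Y$ of finite type over $\eta$, one must produce an open $U \subset S$, an algebraic space $\mathcal{Y}$ of finite type over $U$, and an isomorphism $\mathcal{Y}_\eta \cong Y$. This is the heart of the matter and the main obstacle, since algebraic spaces — unlike affine schemes — are not presented by a single finitely generated algebra, so one cannot just ``clear denominators.'' The standard remedy: choose an \'etale presentation $V_0 \to Y$ by an affine $k(\eta)$-scheme of finite type, together with the \'etale equivalence relation $V_1 = V_0 \times_Y V_0 \rightrightarrows V_0$ (also affine of finite type over $k(\eta)$, by \cite[Proposition 1.2]{LMB} as used in the proof of Lemma \ref{lem:etale.almost.onto}). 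Now $V_0, V_1$ and the structure morphisms are honest finite-type affine data over $k(\eta) = \varinjlim_{U} \mathcal{O}_S(U)$, so by the affine spreading-out argument above they descend to affine $U$-schemes $\mathcal{V}_0, \mathcal{V}_1$ with morphisms $\mathcal{V}_1 \rightrightarrows \mathcal{V}_0$ over some open $U$; after shrinking $U$ further one arranges that these two morphisms are \'etale (\'etaleness is an open condition on the base in the limit) and that $\mathcal{V}_1 \to \mathcal{V}_0 \times_U \mathcal{V}_0$ is a monomorphism defining an equivalence relation (each of the finitely many identities expressing ``equivalence relation'' and the monomorphism condition descends). Then $\mathcal{Y} := \mathcal{V}_0 / \mathcal{V}_1$ is an algebraic space of finite type over $U$ with $\mathcal{Y}_\eta \cong Y$. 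Finally I would note compatibility: the class of $\mathcal{Y}$ in $\varinjlim_U Alg(U)$ is well defined independently of the choices by the already-established full faithfulness, completing the proof that the functor is an equivalence.
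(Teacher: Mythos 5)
Your overall strategy is the same as the paper's: establish the affine spreading-out statement, deduce full faithfulness by passing to affine covers, and prove essential surjectivity by descending a presentation together with its equivalence relation and then gluing/quotienting over a smaller open. So in spirit the proof is fine, and the full-faithfulness part is essentially the paper's argument (the paper packages the reduction to the affine case in a small lemma producing compatible affine \'etale covers for a pair of morphisms, and uses that \'etale covers are epimorphisms, plus flat descent for fullness).

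There is, however, one imprecise step in your essential-surjectivity argument: you assert that $V_1=V_0\times_Y V_0$ is \emph{affine} of finite type over $k(\eta)$, citing \cite[Proposition 1.2]{LMB}. That proposition gives that $V_1$ is a scheme and that the two projections are \'etale, but not that $V_1$ is affine; in general $V_1\to V_0\times V_0$ is only a quasi-compact monomorphism, so $V_1$ is separated and quasi-affine (cf.\ Lemma \ref{lem:1.to.1.implies.quasi.affine}), not affine. As written, your one-shot ``clear denominators in the affine data $V_0,V_1$'' therefore does not directly apply to $V_1$. This is exactly why the paper's proof bootstraps: it first spreads out separated schemes (gluing affine Zariski charts along an \emph{affine} equivalence relation), then arbitrary schemes (where the relation is separated but not affine, handled using the previous step plus the already-proved full faithfulness), and only then algebraic spaces by replacing Zariski covers with \'etale covers. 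Your argument is easily repaired — either establish the scheme case first and apply it to the (quasi-affine) scheme $V_1$, or replace $V_1$ by a further affine \'etale cover and keep track of the extra descent data — but some such step is needed for the proof to close.
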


The affine case follows from a standard argument:

\begin{lemma} \label{lem:affine.case} For a scheme $X$, let $Aff(X)$ be the category of schemes affine over $X$. If $S$ is a reduced and irreducible scheme with generic point $\eta$, then the natural functor
\[
\lim_{\underset{U \subset S}{\longrightarrow}}Aff(U) \rightarrow Aff(\eta)
\]
is an equivalence of categories.
\end{lemma}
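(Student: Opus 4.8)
\textbf{Proof plan for Lemma \ref{lem:affine.case}.}

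The plan is to use the fact that affine schemes over a base correspond contravariantly to quasi-coherent sheaves of algebras, and that for a reduced irreducible scheme $S$ with generic point $\eta$, the category of quasi-coherent $\mathcal O_\eta$-modules (equivalently, $k(\eta)$-vector spaces, once we restrict to finite type) is the filtered colimit of the categories of quasi-coherent $\mathcal O_U$-modules of finite type over the affine opens $U\subset S$. Concretely, I would first reduce to the affine case $S=\Spec A$ with $A$ a domain: the statement is local on $S$, the functors in the colimit only depend on affine opens, and any affine open $U=\Spec A\subset S$ is itself reduced and irreducible with the same generic point, so it suffices to prove that $\varinjlim_{f\in A\smallsetminus 0}\mathrm{Alg}_{A_f} \to \mathrm{Alg}_{k(\eta)}$ is an equivalence, where now ``Alg'' means finitely presented (equivalently, since we are over a Noetherian base by our standing hypothesis, finite type) algebras. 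Here $k(\eta)=\Frac(A)=\varinjlim_f A_f$.

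The key steps are then the standard ``limit of schemes'' arguments (as in \cite[Section 10.143]{SP} or EGA IV.8), carried out at the level of $A$-algebras rather than schemes. \emph{Essential surjectivity}: given a finite type $k(\eta)$-algebra $B_0=k(\eta)[x_1,\dots,x_m]/(g_1,\dots,g_r)$, clear denominators in finitely many generators of the ideal of relations so that all $g_j\in A_f[x_1,\dots,x_m]$ for a single $f\in A\smallsetminus 0$; then $B:=A_f[x_1,\dots,x_m]/(g_1,\dots,g_r)$ is a finite type $A_f$-algebra with $B\otimes_{A_f}k(\eta)\cong B_0$. \emph{Full faithfulness}: given finite type $A_f$-algebras $B,C$ and a $k(\eta)$-algebra map $\bar\varphi:B\otimes k(\eta)\to C\otimes k(\eta)$, the images of the finitely many generators of $B$ lie in $C\otimes k(\eta)=\varinjlim_g C_g$, so after further inverting an element $g$ the map $\bar\varphi$ is defined on $B\otimes_{A_f}A_{fg}\to C\otimes_{A_f}A_{fg}$; one shrinks further so that the finitely many algebra relations are actually satisfied, giving an honest morphism in $\mathrm{Aff}(\Spec A_{fg})$ restricting to $\bar\varphi$. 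The same denominator-clearing shows two morphisms agreeing generically already agree over some $\Spec A_{fg}$, which gives faithfulness. All of this uses only that $A$ is Noetherian (so finite type $=$ finite presentation) and a domain (so $A\hookrightarrow k(\eta)=\varinjlim A_f$ is an injective filtered colimit of localizations).

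I do not expect a serious obstacle here; this is the classical spreading-out argument and the only mild care needed is bookkeeping over the filtered system — making sure that finitely many choices (generators, relations, and the relations among relations needed for functoriality) can be simultaneously accommodated by inverting a single element, which works because the index category $\{A_f\}_{f\neq 0}$ is filtered and each algebra in sight is of finite presentation. The one point worth stating explicitly is that the transition from affine opens of a general reduced irreducible $S$ to the affine base case is legitimate because both the source $2$-functor (a colimit over affine opens, cofinal in all opens) and the target depend only on the local ring $\mathcal O_{S,\eta}=k(\eta)$; after that, everything is algebra.
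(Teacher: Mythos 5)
Your proof is correct and is precisely the standard spreading-out argument that the paper invokes here without writing one out (the lemma is introduced with ``the affine case follows from a standard argument'' and no proof is given): reduce by cofinality of the affine opens $\Spec A_f$ to the statement that $\varinjlim_f \mathrm{Alg}^{\mathrm{ft}}_{A_f} \rightarrow \mathrm{Alg}^{\mathrm{ft}}_{\Frac(A)}$ is an equivalence, then clear denominators in generators, relations, and morphisms. Your write-up supplies exactly that argument, with the right attention to the two points that matter — cofinality of the affine opens inside a fixed $\Spec A$ (using irreducibility) and finite type $=$ finite presentation over the Noetherian base — so there is nothing to correct or to contrast with the paper.
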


\begin{lemma} \label{lem:two.diagrams} Let $X,Y$ be algebraic spaces and let $\varphi_1,\varphi_2:X \rightarrow Y$ be two morphisms. Then there are affine schemes $\widetilde{X},\widetilde{Y}$, etale covers $\pi_X:\widetilde{X} \rightarrow X,\pi_Y:\widetilde{Y} \rightarrow Y$, and morphisms $\widetilde{\varphi_1},\widetilde{\varphi_2}:\widetilde{X} \rightarrow \widetilde{Y}$ such that the diagrams
\[
\xymatrix{\widetilde{X} \ar@{->}[r]^{\widetilde{\varphi_1}} \ar@{->}[d]^{\pi_X} & \widetilde{Y}  \ar@{->}[d]^{\pi_Y} \\ X \ar@{->}[r]^{\varphi_1} & Y} \quad\quad \xymatrix{\widetilde{X} \ar@{->}[r]^{\widetilde{\varphi_2}} \ar@{->}[d]^{\pi_X} & \widetilde{Y}  \ar@{->}[d]^{\pi_Y} \\ X \ar@{->}[r]^{\varphi_2} & Y}
\]
commute.
\end{lemma}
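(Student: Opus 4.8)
The plan is to build $\widetilde Y$ first, then build $\widetilde X$ as an affine etale cover of an auxiliary fiber product lying over $\widetilde Y$. Since $Y$ is of finite type over the Noetherian scheme $S$, it admits an etale surjection from a scheme which is itself Noetherian, hence quasi-compact, hence a finite union of affine opens; the disjoint union of those opens is an affine scheme $\widetilde Y$ equipped with an etale cover $\pi_Y:\widetilde Y\to Y$. Now form the two pullbacks $U_j:=X\times_{\varphi_j,Y}\widetilde Y$ for $j=1,2$. The projection $q_j:U_j\to X$ is a base change of $\pi_Y$, hence an etale cover, and the other projection $r_j:U_j\to\widetilde Y$ satisfies $\pi_Y\circ r_j=\varphi_j\circ q_j$ by definition of the fiber product. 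Each $U_j$ is an algebraic space of finite type over $S$.

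Next I would set $P:=U_1\times_X U_2$, again an algebraic space of finite type over $S$, with projections $p_j:P\to U_j$. Applying the same affine-cover construction to $P$, choose an etale cover $c:\widetilde X\to P$ with $\widetilde X$ an affine scheme, and define
\[
\pi_X:=q_1\circ p_1\circ c=q_2\circ p_2\circ c:\widetilde X\to X,\qquad \widetilde{\varphi_j}:=r_j\circ p_j\circ c:\widetilde X\to\widetilde Y .
\]
Here $q_1\circ p_1\circ c=q_2\circ p_2\circ c$ because both $p_1$ and $p_2$ lie over $X$. The map $\pi_X$ is an etale cover: $q_1$ is surjective and etale, $p_1$ is a base change of the cover $q_2$ along $q_1$ hence surjective and etale, and $c$ is a cover, so the composite is surjective and etale. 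Commutativity of the two required squares is then the computation
\[
\pi_Y\circ\widetilde{\varphi_j}=\pi_Y\circ r_j\circ p_j\circ c=\varphi_j\circ q_j\circ p_j\circ c=\varphi_j\circ\pi_X,
\]
using $\pi_Y\circ r_j=\varphi_j\circ q_j$ and the definition of $\pi_X$.

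I do not expect a genuine obstacle here. The two points that deserve a line of care are (i) producing the \emph{affine} etale covers $\widetilde Y$ and $\widetilde X$, where one uses that everything is of finite type over Noetherian $S$ to refine an arbitrary scheme-cover to a finite affine cover and then take a finite disjoint union; and (ii) checking that the composite $\pi_X$ is surjective, which hinges on $q_2:U_2\to X$ being surjective so that its base change $p_1:P\to U_1$ is surjective. Everything else is formal manipulation of fiber products of algebraic spaces, which is legitimate since algebraic spaces are closed under fiber products and finite type is preserved.
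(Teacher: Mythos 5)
Your construction is exactly the paper's: the paper takes an affine etale cover $\widetilde Y\to Y$, forms $X_i=\widetilde Y\times_{Y,\varphi_i}X$ (your $U_j$), then $X_3=X_1\times_X X_2$ (your $P$), and takes an affine etale cover $\widetilde X\to X_3$, defining $\pi_X$ and $\widetilde\varphi_i$ by the same compositions. Your proposal is correct and simply spells out the routine verifications (surjectivity of the composite and commutativity of the squares) that the paper leaves as ``easy to see.''
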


\begin{proof} Let $\pi_Y:\widetilde{Y} \rightarrow Y$ be an etale cover of $Y$ {by an affine scheme}. For $i=1,2$, let ${X}_i=\widetilde{Y} \times_Y X$, where the map $X \rightarrow Y$ is $\varphi_i$. Let ${X}_3={X_1} \times_X {X_2}$, let $\widetilde{X} \rightarrow X_3$ be an etale cover of $X_3$ {by an affine scheme}, and let $\pi_X:\widetilde{X} \rightarrow X$ be the composition $\widetilde{X} \rightarrow X_3 \rightarrow X$. It is easy to see that $\pi_X$ is etale and that there are $\widetilde{\varphi_i}$ as requested by the lemma.
\end{proof}
{The following is standard.}
\begin{lemma} \label{lem:generic.cover} If {a morphism} $X \rightarrow Y$ between schemes is an etale (respectively, Zariski) cover over the generic point, then it is an etale (respectively, Zariski) cover over an open set.
\end{lemma}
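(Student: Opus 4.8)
The plan is to reduce the claim to two standard facts: that étaleness (resp.\ being a Zariski open immersion surjective onto the target) can be tested fiberwise after checking flatness/finite presentation, and that constructibility of the relevant locus lets us pass from ``holds at the generic point'' to ``holds on an open set.'' First I would reduce to the case where $X$ and $Y$ are of finite type (or at least of finite presentation) over the base: the hypothesis only concerns the behaviour over the generic point $\eta$ of $Y$, so I may replace $Y$ by an affine irreducible reduced neighbourhood and $X$ by a quasi-compact open, and then by a standard limit argument (spreading out) assume $X\to Y$ is of finite presentation. Write $f\colon X\to Y$ and let $\eta$ be the generic point of $Y$; by assumption $X_\eta\to\{\eta\}$ is an étale cover (resp.\ a Zariski cover, i.e.\ an isomorphism onto $\{\eta\}$ after restricting to a clopen, which over a point just means $X_\eta$ contains $\eta$ as a connected component mapping isomorphically).

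Next I would spread out each of the defining properties of ``étale cover'' separately. The locus in $Y$ over which $f$ is flat is open (by \cite[Th\'eor\`eme 11.1.1]{EGA4} or \cite{SP}, since $f$ is of finite presentation), and it contains $\eta$; the locus over which $f$ is unramified is open for the same reason; the locus over which $f$ is surjective is constructible (Chevalley) and stable under generization once $f$ is flat, hence contains an open neighbourhood of $\eta$. Intersecting these open sets gives an open $V\ni\eta$ over which $f$ is flat, unramified, of finite presentation, and surjective, i.e.\ an étale cover. For the Zariski case the same template works: ``$f$ is an open immersion'' is an open condition on the base for $f$ of finite presentation (combine the flat locus with the monomorphism/unramified-with-trivial-fibres locus, or cite \cite[Lemme 17.9.5]{EGA4}), and surjectivity is handled by constructibility exactly as before.

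The main obstacle — really the only subtlety — is bookkeeping the hypotheses needed to invoke the spreading-out results: one must make sure that after shrinking $Y$ the morphism $X\to Y$ is genuinely of finite presentation (the lemma as stated says ``between schemes'' with no finiteness assumption, but in the paper it is applied to morphisms that are a priori locally of finite type, so this is harmless, and in any case one can work with the finitely presented approximations produced by \cite[\S8]{EGA4}). Once that is in place, each individual property (flat, unramified, surjective, or open immersion) is handled by a citation to the constructibility/openness theorems in \cite{SP} or \cite{EGA4}, and the conclusion follows by taking a finite intersection of the resulting open neighbourhoods of $\eta$. I would therefore present the proof as: reduce to finite presentation; quote openness of the flat and unramified (resp.\ open-immersion) loci; quote constructibility of the image together with stability of surjectivity under generization for flat maps; intersect.
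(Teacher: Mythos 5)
The paper gives no proof of this lemma at all --- it is labelled ``standard'' and the proof is omitted --- so there is nothing to compare your argument against; your spreading-out argument is exactly the standard proof one would supply, and it is correct in substance. Two small points are worth tightening. First, the set of $y\in Y$ over which $f$ is flat (resp.\ unramified) is not in general \emph{open} in $Y$: the flat (resp.\ unramified) locus is open in $X$, and what you actually use is that the image of its closed complement is constructible (Chevalley) and misses $\eta$, hence misses a nonempty open; since $Y$ is irreducible with generic point $\eta$, ``constructible and contains $\eta$'' already gives a dense open, which is all the lemma needs --- the same remark applies to the surjectivity locus. Second, in the Zariski case a ``Zariski cover'' in this paper is a disjoint union of open immersions that jointly surject, not a single surjective open immersion; so rather than spreading out ``$f$ is an open immersion'' you should first spread out the clopen decomposition of $X_\eta$ into its connected components (possible after shrinking $Y$, by the usual limit arguments of \cite[\S 8]{EGA4}) and then spread out each resulting open immersion separately, again by \cite[8.10.5]{EGA4}, before intersecting with the open set over which the map is surjective. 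With those adjustments the argument is complete and matches the intended ``standard'' proof.
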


\begin{proof}[Proof of Proposition \ref{prop:gen.pt}] $ $
\begin{description}%[style=unboxed]
%\begin{itemize}
\item[Faithful:] Let $U \subset S$ be an open set, $X,Y\in Alg(U)$, and $\varphi_1,\varphi_2:X \rightarrow Y$ be morphisms such that $\varphi_1|_\eta:X_\eta \rightarrow Y_\eta$ is equal to $\varphi_2|_\eta:X_\eta \rightarrow Y_\eta$. We need to show that there is an open $U' \subset U$ such that $\varphi_1|_{U'}=\varphi_2|_{U'}$. Apply Lemma \ref{lem:two.diagrams} to get the diagrams
\[
\xymatrix{\widetilde{X} \ar@{->}[r]^{\widetilde{\varphi_1}} \ar@{->}[d]^{\pi_X} & \widetilde{Y}  \ar@{->}[d]^{\pi_Y} \\ X \ar@{->}[r]^{\varphi_1} & Y} \quad\quad \xymatrix{\widetilde{X} \ar@{->}[r]^{\widetilde{\varphi_2}} \ar@{->}[d]^{\pi_X} & \widetilde{Y}  \ar@{->}[d]^{\pi_Y} \\ X \ar@{->}[r]^{\varphi_2} & Y}.
\]
By Lemma \ref{lem:affine.case}, there is an open set $U' \subset U$ such that $\widetilde{\varphi_1}|_{U'}=\widetilde{\varphi_2}|_{U'}$. Since $\pi_X|_{U'}$ is an etale cover, it is an epimorphism, and therefore $\varphi_1|_{U'}=\varphi_2|_{U'}$.
\item[Full:] Let $U \subset S$ be open, $X,Y\in Alg(U)$, and {$\varphi:X_{\eta} \rightarrow Y_\eta$}. We need to show that there is an open $U' \subset U$ and $\psi:X_{U} \rightarrow Y_U$ such that $\psi|_\eta=\varphi$. Let $\pi_X:\widetilde{X} \rightarrow X$ and $\pi_Y:\widetilde{Y} \rightarrow Y$ be etale covers {by affine schemes}. We have maps $\widetilde{X}_\eta \rightarrow X_\eta \rightarrow Y_\eta$. Let $p_\mathcal{Z}:\mathcal{Z} \rightarrow \widetilde{X}_\eta \times_{Y_\eta} \widetilde{Y}_\eta$ be an etale cover {by an affine scheme}. By Lemma \ref{lem:affine.case}, there is an open subset $U' \subset U$, a scheme $Z\in Aff(U')$, morphisms $\alpha:Z \rightarrow \widetilde{X}$ and $\beta:Z \rightarrow \widetilde{Y}$, and an isomorphism $Z_\eta \rightarrow \mathcal{Z}$ such that $\alpha{|}_\eta$ is equal to the composition $Z_\eta \rightarrow \mathcal{Z} \rightarrow \widetilde{X}_\eta \times_{Y_\eta} \widetilde{Y}_\eta\rightarrow \widetilde{X}_\eta$ and $\beta{|}_\eta$ is equal to the composition $Z_\eta \rightarrow \mathcal{Z} \rightarrow \widetilde{X}_\eta \times_{Y_\eta} \widetilde{Y}_\eta\rightarrow \widetilde{Y}_\eta$. By lemma \ref{lem:generic.cover}, there is an open subset $U'' \subset U'$ such that the {restriction $\alpha{|}_{U''}:Z_{U''} \rightarrow \widetilde{X}_{U''}$} is etale. Let $\gamma$ be the composition $Z \rightarrow \widetilde{Y} \rightarrow Y$, and let $p_1,p_2:Z \times_X Z \rightarrow Z$ be the two projections. Since $(\gamma \circ p_1){|}_\eta=(\gamma \circ p_2){|}_\eta$, the faithfullness implies that there is an open subset $U''' \subset U''$ such that $(\gamma \circ p_1){|}_{U'''}=(\gamma \circ p_2){|}_{U'''}$. By {faithfully flat} descent, there is a morphism $\psi:X_{U'''} \rightarrow Y_{U'''}$ such that {the composition $Z\to X\overset{\psi}{\to} Y$ is equal to $\gamma|_{U'''}$. Since $Z_\eta\to X_\eta$ is epimorphism, this implies that} $\psi{|}_\eta={\varphi}$.
\item[Essentially surjective:] We divide the proof to the following steps
\begin{enumerate}
\item {We prove that i}f $\mathcal{X}$ is a separated scheme over $\eta$, then there is an open $U \subset S$ and a scheme $X$ over $U$ such that $X_\eta=\mathcal{X}$:\\
Let $\widetilde{\mathcal{X}} \rightarrow \mathcal{X}$ be {a Zariski cover by an affine scheme}, and let $\mathcal{R}:=\widetilde{\mathcal{X}}\times_{\mathcal{X}}\widetilde{\mathcal{X}}$. Note that $\mathcal{R}$ is an affine scheme, the two projections $\mathcal{R} \rightarrow \widetilde{\mathcal{X}}$ are Zariski covers, and the embedding $\mathcal{R} \rightarrow \widetilde{\mathcal{X}} \times \widetilde{\mathcal{X}}$ makes $\mathcal{R}$ into an equivalence relation. By Lemmas \ref{lem:affine.case} and \ref{lem:generic.cover}, there is an open set $U \subset S$, $U$-schemes $R,\widetilde{X}$, and a monomorphism $R \rightarrow \widetilde{X}\times \widetilde{X}$ such that the two projections $R \rightarrow \widetilde{X}$ are Zariski covers, $R$ is an equivalence relation on $\widetilde{X}$, and the map $R_\eta \rightarrow \widetilde{X}_\eta\times\widetilde{X}_\eta$ is isomorphic to $\mathcal{R} \rightarrow \widetilde{\mathcal{X}}\times \widetilde{\mathcal{X}}$. Let $X$ be the gluing of $\widetilde{X}$ along $R$. Then $\widetilde{X}$ is a scheme and $X_\eta$ is isomorphic to $\mathcal{X}$.
\item {We prove that i}f $\mathcal{X}$ is an arbitrary scheme over $\eta$, then there is an open $U \subset S$ and a scheme $X$ over $U$ such that $X_\eta=\mathcal{X}$:\\
The proof is similar. The only difference is that, in this case, $\mathcal{R}$ is separated by not necessarily affine. Instead of using Lemma \ref{lem:affine.case}, we use the previous step and the fully faithfulness.
\item {We prove that i}f $\mathcal{X}$ is an algebraic space over $\eta$, then there is an open $U \subset S$ and an algebraic space $X$ over $U$ such that $X_\eta=\mathcal{X}$:\\
The proof is similar to the previous step, replacing Zariski covers by etale covers.
\end{enumerate}
\end{description}
%\end{itemize}
\end{proof}

\subsection{Group algebraic spaces}

\begin{proposition}[{\cite[5.1.1]{Beh} \label{prop:group.algebraic.space} \footnote{The proof there uses implicitly \cite[60.13.2]{SP}}}]
Let $X$ be a scheme and $G$ be a group algebraic space over it.
Then there exists a stratification $X=\bigcup X_i$
such that $G|_{X^{{red}}_i}$ is  a group scheme. Here $X^{{red}}_i$ is the reduction of $X_i$.
\end{proposition}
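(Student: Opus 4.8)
The plan is to reduce the statement about group algebraic spaces to the known case of group schemes. By Noetherian induction on $X$, it suffices to find a \emph{dense open} $U \subseteq X$ such that $G|_{U^{\mathrm{red}}}$ is a group scheme; applying this repeatedly to the complement of the open locus (which is again a Noetherian scheme, so the induction terminates) produces the required finite stratification. Since the formation of $G$ and its group structure is unaffected by passing to $X^{\mathrm{red}}$ (the identity, multiplication and inverse are all morphisms of algebraic spaces over $X$, and a group algebraic space over $X^{\mathrm{red}}$ coincides with the pullback), I may assume from the start that $X$ is reduced, and, by working on connected components, that $X$ is integral with generic point $\eta$.

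First I would handle the generic fiber. The group algebraic space $G_\eta$ over the field $\eta$ is, by a standard result (e.g.\ \cite[60.13.2]{SP} or Artin's theorem that a group algebraic space locally of finite type over a field is a scheme), in fact a group \emph{scheme} over $\eta$. Next I would spread this out: by Proposition \ref{prop:gen.pt}, the $2$-functor $U \mapsto Alg(U)$ has $Alg(\eta)$ as its filtered colimit over dense opens $U \subseteq X$, so the scheme $G_\eta$, together with its multiplication, unit section and inversion morphisms and the identities (associativity, unit, inverse) that these satisfy, all descend to some dense open $U \subseteq X$. Concretely, $G_\eta$ being a scheme means it is covered by finitely many affine opens glued along an equivalence relation; by the essential surjectivity and fullness in Proposition \ref{prop:gen.pt} this gluing datum, hence the scheme $G_\eta$, is the pullback of a scheme $H$ over some dense open $U$, and after further shrinking $U$ the isomorphism $G|_U \to H$ (as algebraic spaces over $U$) exists; shrinking once more, the group operations and axioms transport along this isomorphism because morphisms and equalities of morphisms over $\eta$ extend to a neighborhood (faithfulness of the colimit functor). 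Thus $G|_U$ is a group scheme over $U$, which is exactly what Noetherian induction requires.

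The main obstacle is the bookkeeping in the spreading-out step: one must extend not just the \emph{object} $G_\eta$ as a scheme but also its entire group structure (three structure morphisms and a fixed finite list of commutative diagrams) compatibly over a single dense open, and verify that ``being a scheme'' (rather than merely an algebraic space) is an open-type condition in this colimit sense. Both are taken care of by Proposition \ref{prop:gen.pt}: essential surjectivity gives the scheme $H$ over some $U$; fullness gives the structure morphisms of $H$ over a possibly smaller $U$ as the extensions of those of $G_\eta$; and faithfulness guarantees that the finitely many group axioms, which hold over $\eta$, continue to hold after shrinking $U$. Since this is a routine, if slightly tedious, descent argument and the nontrivial input (Proposition \ref{prop:gen.pt}, together with the fact that group algebraic spaces over a field are schemes) is already available, the proof is essentially mechanical once the reduction to the integral, generic-fiber situation is set up.
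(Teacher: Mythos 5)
Your argument is correct and is essentially the one the paper relies on: the paper itself only cites \cite[5.1.1]{Beh} for this proposition (flagging in a footnote that the implicit input is the Stacks Project result that a group algebraic space of finite type over a field is a scheme), and both that proof and the paper's own neighbouring Propositions \ref{prop:strat.qca.1} and \ref{prop:strat.qca} follow exactly your template --- reduce to a reduced, irreducible base, apply Artin's theorem at the generic point, spread out via Proposition \ref{prop:gen.pt}, and conclude by Noetherian induction. The only nitpick is that ``connected components'' should be ``irreducible components'' (a reduced connected scheme need not be integral); discarding the pairwise intersections of the components is absorbed by the Noetherian induction, and note also that once $G|_U$ is known to be representable by a scheme the group structure comes for free, since its structure morphisms are already present as restrictions from $G$.
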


%{\url{https://www.math.ubc.ca/~behrend/ladic.pdf} 5.1.1 based on
%\url{https://stacks.math.columbia.edu/tag/03JG}} 60.13.2

\subsubsection{Stratification of QCA stacks}% and linear groups}

\begin{proposition}\label{prop:strat.qca.1}
Let $X$ be a scheme and $G$ be a group {scheme} over it. Assume that $\fX:=(BG)_{fppf}$ is a QCA stack. Then there exists a stratification $X=\bigcup X_i$ such that
$G|_{X^{{red}}_i}$  can be embedded as a  closed subgroup in $GL_n\times_{spec \bZ} X^{{red}}_i$ for some $n$.% here $\bar X_i$ is the reduction of $X_i$.
\end{proposition}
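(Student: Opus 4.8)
The plan is to reduce, by Noetherian induction, to the generic point of $X$; to use the QCA hypothesis there to see that the generic fibre of $G$ is affine; to linearize that fibre by the classical embedding theorem for affine group schemes over a field; and to spread the resulting embedding out over a dense open subscheme.

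\emph{Reduction.} Since the conclusion involves only the reduced strata $X_i^{\mathrm{red}}$, and since the statement for $X$ is equivalent to the statement for $X^{\mathrm{red}}$, a routine Noetherian induction reduces matters to the following: given $X$ reduced and irreducible with generic point $\eta$, produce a dense open $U\subseteq X$ and a homomorphism of $U$-group schemes $G|_U\hookrightarrow\GL_n\times_{\Spec\bZ}U$ which is a closed immersion, for some $n$. Indeed, the closed complement $X\setminus U$, with its reduced structure, carries a group scheme whose classifying stack is again QCA (for $F$ separably closed, an $F$-point of the classifying stack of $G|_{X\setminus U}$ determines one of $\fX$ with the same automorphism group scheme), so the inductive hypothesis stratifies it; restricting the embeddings obtained on those strata (an embedding restricts to any locally closed piece) and refining to a disjoint family assembles the required stratification of $X$.

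\emph{From QCA to affineness of $G_\eta$.} Let $\bar\eta\colon\Spec k(\eta)^{sep}\to X$ be a geometric point over $\eta$, where $k(\eta)^{sep}$ is a separable closure of $k(\eta)$. The $\bar\eta$-point of $\fX=(BG)_{fppf}$ given by the trivial $G_{\bar\eta}$-torsor has automorphism functor canonically isomorphic to $G_{\bar\eta}$, so the QCA hypothesis forces $G_{\bar\eta}$ to be a linear algebraic group, in particular affine over $k(\eta)^{sep}$. The property of being an affine morphism descends along the faithfully flat map $\Spec k(\eta)^{sep}\to\Spec k(\eta)$, so $G_\eta$ is an affine group scheme of finite type over the field $k(\eta)$. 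By the standard fact that every such group scheme is a closed subgroup scheme of some general linear group, there is a closed immersion of $k(\eta)$-group schemes $G_\eta\hookrightarrow\GL_{n,k(\eta)}$ for some $n$.

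\emph{Spreading out.} Since $\Spec k(\eta)$ is the inverse limit of the open neighbourhoods $U$ of $\eta$ and $G\to X$ is of finite presentation, standard limit arguments (see \cite[\S8]{EGA4}; for the affineness one may instead invoke Lemma \ref{lem:affine.case}) allow one to shrink $U$ so that $G|_U$ is affine over $U$ and the morphism $G_\eta\to\GL_{n,k(\eta)}$ extends to a morphism $f\colon G|_U\to\GL_n\times_{\Spec\bZ}U$. Shrinking $U$ further using generic flatness of $G\to X$ makes $G_\eta\times_{k(\eta)}G_\eta$ dense in $G|_U\times_U G|_U$, whence the (closed) locus on which the two relevant maps into $\GL_n\times_{\Spec\bZ}U$ agree is everything, i.e.\ $f$ is a group homomorphism; and writing algebra generators of $\mathcal O(G|_U)$ as polynomial expressions in the matrix entries and the inverse determinant over a small enough $U$ shows that $f$ is a closed immersion. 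As $U$ is already reduced, $U=U^{\mathrm{red}}$ is the desired open stratum. I do not expect a genuine obstacle in this argument: the one conceptual step is the short observation that QCA, applied to the trivial torsor over $k(\eta)^{sep}$, pins down affineness of $G_\eta$; the only part needing care is arranging the extension $f$ to be simultaneously a closed immersion and a group homomorphism over a single open $U$, which is routine finite-presentation bookkeeping.
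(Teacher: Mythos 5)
Your proof is correct and follows essentially the same route as the paper: reduce by Noetherian induction to finding a dense open $U$ over a reduced irreducible $X$, use QCA at the generic point to see that $G_\eta$ is linear, embed it into $\GL_n$ over $k(\eta)$, and spread the embedding out (the paper invokes its Proposition~\ref{prop:gen.pt} for the spreading-out where you cite EGA~IV limit arguments). Your passage through the separable closure to apply the QCA hypothesis literally, followed by descent of affineness to $k(\eta)$, is in fact slightly more careful than the paper's one-line ``$Aut(x)$ is $G_\eta$, therefore $G_\eta$ is linear.''
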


\begin{proof}%[Proof of Proposition \ref{prop:strat.qca}]
Without loss of generality, we may assume that $X$ is reduced, irreducible and affine.
%Using the previous proposition (corlary ??) we can assume that $G$ is  a group scheme.
 By Noetherian induction, it is enough to prove that there exist open $U \subset X$ and an integer $n$ such that $G|_{U}$ can be embedded as a closed subgroup in $GL_n\times_{spec \bZ} U$.
%WLOG we may assume that $X$ is affine.
Let $\eta$ be the generic point of $X$.
Denote the composition $\eta \to X \to \fX$ by $x$. By definition, the group $Aut(x)$ is $G_\eta$. Therefore $G_\eta$ is linear.
% By ??, since $G_\eta$ is quasi-affine it is linear.
Thus we have a closed embeding  morphism  $G_{\eta}\to GL_n\times_{spec \bZ} \eta$.
By Proposition \ref{prop:gen.pt}, we can extend this embeding to an embeding $\phi:G_V \to GL_n\times_{spec \bZ} V$ for some affine open $V \subset X$, as required.
%  Let $H$ be  the closer of its image.
%Since $\phi_\eta$ is a closed embeding we obtain by \ref{lem:Svar}\eqref{lem:Svar:3} that there exist   open affine $U \subset V$ such that $\phi_U:G_U\to H_U$ is isomorphism.

%we have a morphism $\psi_\eta:H_\eta \to G_\eta$ which is inverse to $\phi_\eta$ when the later considered as a morphism $G_\eta \to H_\eta$. Thus, by \ref{lem:Svar}\eqref{lem:Svar:2}, we can extand $\psi$ to $H_U$ where  $U \subset V$ is  open affine. Apling Lemma \ref{lem:Svar}\eqref{lem:Svar:4} we obtain that $\psi_U$ and $\phi_U$ are inverse of each other.

\end{proof}

\subsubsection{Quotients of group schemes}

\begin{definition}[Quotient of group schemes]
Let $H \subset G$ be an embedding of group schemes and let $Y$ be a scheme. A morphism of schemes $G \to Y$ is a quotient iff
the map $G \times H \to G\times_Y G$ given by $(g,h)\mapsto (g,gh)$ is an  isomorphism.
\end{definition}

%\nir{add: base change of quotient is quotient of base change}

\begin{lemma} \label{lem:base.change.quotient} Let $X$ be a scheme, let $H \subset G$ be group schemes over $X$ such that the quotient $p:G \rightarrow G/H$ exists, and let $f:H \rightarrow G$ be an $H$-equivariant map. Denote the structure map of $H$ by $s_H:H \rightarrow X$ and the identity map of $H$ by $1_H:X\rightarrow H$. Let $\nu := p \circ f \circ 1_H:X \rightarrow G$. Then the diagram
\[
\xymatrix{H \ar@{->}[r]^f \ar@{->}[d]^{s_H} & G \ar@{->}[d]^p\\ X \ar@{->}[r]^{\nu}& G/H}
\]
is cartesian.
\end{lemma}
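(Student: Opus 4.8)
The plan is to reduce the statement to the defining isomorphism of the quotient, by first rewriting $f$ as a translate of the inclusion $H\hookrightarrow G$ by a section of $G\to X$, and then verifying the fiber-product identity on $T$-points.

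First I would note that $\sigma:=f\circ 1_H\colon X\to G$ is a section of the structure map $s_G\colon G\to X$ (since $f$ is a morphism over $X$ we have $s_G\circ f=s_H$, hence $s_G\circ\sigma=s_H\circ 1_H=\operatorname{id}_X$), so that $\nu=p\circ\sigma$ is a well-defined map $X\to G/H$, which is what the displayed square means. Next, $H$-equivariance of $f$ --- with respect to right multiplication, consistent with $p$ being the quotient by the right $H$-action --- yields the translation formula
\[
f(h)=\sigma\bigl(s_H(h)\bigr)\cdot h,\qquad\text{i.e.}\qquad f=m\circ(\sigma\circ s_H,\ \iota),
\]
where $\iota\colon H\hookrightarrow G$ is the inclusion and $m\colon G\times_X G\to G$ is multiplication: equivariance is the identity $f\circ a_H=a_G\circ(f\times\operatorname{id}_H)$ for the action maps $a_H\colon H\times_X H\to H$ and $a_G\colon G\times_X H\to G$, and precomposing it with $h\mapsto(1_{s_H(h)},h)$ gives the formula. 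Since right multiplication by elements of $H$ preserves the fibers of $p$, the formula gives $p\circ f=p\circ\sigma\circ s_H=\nu\circ s_H$, so the square commutes; it remains to prove it is cartesian.

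For the cartesian property I would check that the canonical map $\gamma\colon H\to G\times_{G/H}X$ determined by $f$ and $s_H$ is an isomorphism, by Yoneda. A $T$-point of $G\times_{G/H}X$ is a pair $(g,x)$ with $g\in G(T)$, $x\in X(T)$, $s_G(g)=x$, and $p(g)=\nu(x)=p(\sigma(x))$; thus $(\sigma(x),g)$ is a $T$-point of $G\times_{G/H}G$, so by the definition of the quotient --- the isomorphism $G\times_X H\xrightarrow{\ \sim\ }G\times_{G/H}G$, $(g',h)\mapsto(g',g'h)$ --- there is a unique $h\in H(T)$ with $s_H(h)=x$ and $g=\sigma(x)h$. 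By the translation formula $g=f(h)$, so $(g,x)=(f(h),s_H(h))=\gamma(h)$, and conversely every $\gamma(h)$ has this form; hence $\gamma$ is bijective on $T$-points for every $T$, i.e.\ an isomorphism, which is the assertion. I expect the only delicate point to be keeping the left/right conventions --- the $H$-action on $H$ and on $G$, the side on which $G/H$ is formed, and the isomorphism defining it --- mutually consistent, so that the translation formula and the identification $g=\sigma(x)h$ point the same way; granting that, the proof is a formal diagram chase using nothing beyond the hypothesis that the quotient exists in the stated sense (no flatness or finiteness of $H$ in $G$ is needed).
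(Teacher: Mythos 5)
Your proof is correct and rests on the same two ingredients as the paper's: the defining isomorphism $G\times_X H\xrightarrow{\sim}G\times_{G/H}G$, $(g,h)\mapsto(g,gh)$, and the observation that an $H$-equivariant $f$ is the inclusion translated by the section $\sigma=f\circ 1_H$ (which is exactly the content of the paper's terse ``without loss of generality $f$ is the group embedding''). The only difference is presentational --- you verify the cartesian property on $T$-points, while the paper composes two cartesian squares --- so this is essentially the paper's argument, with the reduction step made explicit.
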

\begin{proof}
Without loss of generality we can assume that $f$ is the group embedding.
We get the desired square by composing the following two:
$$
\xymatrix{G\times_X H \ar@{->}[rr]^{(g,h)\mapsto gh} \ar@{->}[d]^{(g,h)\mapsto g} && G \ar@{->}[d]^p\\ G \ar@{->}[rr]^{p}&& G/H}
$$
and
$$
\xymatrix{H \ar@{->}[rr]^{h\mapsto (1,h)} \ar@{->}[d]^{(g,h)\mapsto g} && G\times_X H \ar@{->}[d]^{(g,h)\mapsto g}\\ X \ar@{->}[rr]^{1_G}&& G}
$$
%Let $q: G\times_{G/H} G$ be the projection. and let $\mu :q \circ f\times f \circ 1_H:X \rightarrow G$.
\end{proof}

\begin{proposition}\label{prop:strat.qca}
Let $X$ be a scheme, let $G$ be a smooth and affine group algebraic space over $X$, and let $H \subset G$ be a subgroup algebraic space over $X$. Then there exists a stratification $X=\bigcup X_i$ such that
$G|_{\bar X_i}$ and $H|_{\bar X_i}$ are group schemes and
the quotients $G|_{\bar X_i}/H|_{\bar X_i}$  exist. %here $\bar X_i$ is the reduction of $X_i$.
%In othere words there exist schemes $Y_i$ over $\bar X_i$ and morphism $G|_{\bar X_i} \to Y_i$ such that the map $G \times H \to G\times_Y G$ given by $(g,h)\mapsto (g,gh)$ is an  isomorphism.
\end{proposition}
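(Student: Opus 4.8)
The plan is to reduce to the generic point, invoke Chevalley's theorem on quotients there, and then spread the quotient out using Proposition \ref{prop:gen.pt}, the whole argument being organized as a Noetherian induction on $X$.

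First I would apply Proposition \ref{prop:group.algebraic.space} to $G$ and to $H$ and intersect the two resulting stratifications; replacing $X$ by the reduction of a stratum, I may assume from the start that $X$ is reduced and that both $G$ and $H$ are honest group schemes over $X$, with $H \hookrightarrow G$ a closed immersion (over a field this is automatic for monomorphisms of finite-type group schemes, and that is the only place it is needed). Since the conclusion asks for a stratification, it suffices, by Noetherian induction, to treat $X$ reduced and irreducible with generic point $\eta$ and to produce a dense open $U \subseteq X$ over which the quotient $G|_U/H|_U$ exists in the sense of the definition above; one then recurses on the reduced closed complement $X \smallsetminus U$, whose underlying space is strictly smaller.

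Over the generic point, $G_\eta$ is an affine group scheme of finite type over the field $k(\eta)$ and $H_\eta \subset G_\eta$ is a closed subgroup scheme, so by Chevalley's theorem the fppf quotient $Q_\eta := G_\eta/H_\eta$ exists as a quasi-projective $k(\eta)$-scheme, the projection $\pi_\eta \colon G_\eta \to Q_\eta$ is an fppf $H_\eta$-torsor, and hence $G_\eta \times H_\eta \to G_\eta \times_{Q_\eta} G_\eta$, $(g,h)\mapsto(g,gh)$, is an isomorphism; that is, $\pi_\eta$ is a quotient in the required sense. I then spread this out with Proposition \ref{prop:gen.pt}: since $Q_\eta$ is a scheme, there are a dense open $U_0 \subseteq X$ and a scheme $Q$ over $U_0$ with generic fibre $Q_\eta$; by fullness the morphism $\pi_\eta$ extends to $\pi \colon G|_{U_1} \to Q|_{U_1}$ over some dense open $U_1$; and by faithfulness the two maps $G|_{U_1}\times_{U_1} H|_{U_1} \rightrightarrows Q|_{U_1}$, $(g,h)\mapsto\pi(g)$ and $(g,h)\mapsto\pi(gh)$, which coincide over $\eta$, coincide over some dense open $U_2$. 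Over $U_2$ the canonical map $\mu\colon G\times_X H \to G\times_X G$, $(g,h)\mapsto(g,gh)$, therefore factors through $G|_{U_2}\times_{Q|_{U_2}}G|_{U_2}$, and the resulting morphism restricts over $\eta$ to the isomorphism above; applying Proposition \ref{prop:gen.pt} a last time (extend the inverse by fullness, check that the two composites are identities by faithfulness) it is an isomorphism over a dense open $U \subseteq U_2$. Then $\pi|_U$ exhibits $Q|_U$ as $G|_U/H|_U$, the induction closes, and unwinding the reductions from the first step returns a stratification of the original $X$.

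The step needing the most care is the spreading-out in the last paragraph: one must feed the data into Proposition \ref{prop:gen.pt} in the right order — the target scheme $Q$, then the projection $\pi$, then the $H$-invariance of $\pi$, then the isomorphism condition — shrinking $X$ at each stage, and one must note that ``being an isomorphism'' propagates because an inverse, being a morphism of algebraic spaces over the generic point, itself extends by the equivalence of Proposition \ref{prop:gen.pt}. The only genuinely external input is Chevalley's quotient theorem over a field, applied to a possibly non-smooth $H_\eta$; the hypothesis that $G$ is smooth is used only to see that $G_\eta \to G_\eta/H_\eta$ is smooth, which is not strictly required by the notion of quotient employed here but keeps the setup aligned with its use in Lemma \ref{lem:key.BG.special}.
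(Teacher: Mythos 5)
Your proposal is correct and follows essentially the same route as the paper: reduce to group schemes via Proposition \ref{prop:group.algebraic.space}, pass to the generic point by Noetherian induction, invoke the existence of quotients over a field (the paper cites \cite[Corollary 1.2]{Con} for what you call Chevalley's theorem), and spread out with Proposition \ref{prop:gen.pt}. The paper compresses the entire spreading-out step into one sentence, so your careful unwinding of it (target, projection, invariance, then the isomorphism condition, shrinking at each stage) is a faithful elaboration rather than a different argument.
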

\begin{proof}
Without loss of generality, we may assume that $X$ is reduced, irreducible and affine.
Using Proposition \ref{prop:group.algebraic.space}, we can assume that $G$ is  a group scheme. By Noetherian induction, it is enough to prove that there exists open $U \subset X$ such that the quotient $G|_{U}/H|_{U}$ exists. Let $\eta$ be the generic point of $X$. By \cite[Corollary 1.2]{Con}, the quotient $Y:=G|_{\eta}/H|_{\eta}$ exists. The assertion now follows from Proposition \ref{prop:gen.pt}.

%By ?? we can find an $X$-scheme $Z$ such that  $Y=Z_\eta$. By ?? we can find affine open  $V \subset X$ and extand the map $\phi:G|_{\eta}\to Z|_\eta$ to a map $\phi:G|_{V}\to Z|_V$. By ?? there exist $U \subset V$ such that $\phi_U:G|_{U}\to Z|_U$ is a quotient map.

%\url{http://math.stanford.edu/~conrad/papers/luminysga3.pdf}
\end{proof}

\end{document}